\newcommand{\tRe}{\textup{Re }}
\newcommand{\sumstar}{\sideset{}{^*}\sum}
\newcommand{\sumsharp}{\sideset{}{^\#}\sum}
\newcommand{\sumt}{\sideset{}{^{rel}}\sum}
\newcommand{\eps}{\varepsilon}
\newcommand{\A}{\mathcal{A}}
\newcommand{\es}[1]{\begin{equation}\begin{split}#1\end{split}\end{equation}}
\newcommand{\est}[1]{\begin{equation*}\begin{split}#1\end{split}\end{equation*}}
\renewcommand{\mod}[1]{~\pr{\textnormal{mod}~#1}}
\newtheorem{thm}{Theorem}[section]
\newtheorem{prop}[thm]{Proposition}
\newtheorem{lem}[thm]{Lemma}
\theoremstyle{remark}
\newtheorem{rem}{Remark}
\newtheorem{rem*}{Remark}
\newcommand{\pr}[1]{\left( #1\right)}
\newcommand{\pg}[1]{\left\{ #1\right\}}
\newcommand{\sgn}{\operatorname{sgn}}
\newcommand{\e}[1]{\operatorname{e}\pr{ #1}}
\newcommand{\bfrac}[2]{\left(\frac{#1}{#2}\right)}
\newcommand{\fG}{\mathfrak{G}}
\newcommand{\fI}{\mathfrak{I}}
\newcommand{\fS}{\mathfrak{S}}
\newcommand{\floor}[1]{\lfloor #1 \rfloor}
\newcommand{\cG}{\mathcal{G}}
\newcommand{\cJ}{\mathcal{J}}
\newcommand{\cR}{\mathcal{R}}
\def\sumstar{\operatornamewithlimits{\sum\nolimits^*}}
\newcommand{\sumtwo}{\operatorname*{\sum\sum}}
\newcommand{\sumthree}{\operatorname*{\sum\sum\sum}}
\newcommand{\comment}[1]{}
\let\originalleft\left
\let\originalright\right
\renewcommand{\left}{\mathopen{}\mathclose\bgroup\originalleft}
\renewcommand{\right}{\aftergroup\egroup\originalright}
\numberwithin{equation}{section}
\begin{document}
\title{The second moment of $GL(4) \times GL(2)$ $L$-functions at special points}

\date{\today}
\subjclass[2010]{11F11, 11F41, 11F72}
\keywords{Hecke Maass form, moments}

\author[V. Chandee]{Vorrapan Chandee}
\address{Department of Mathematics \\  Kansas State University \\
	138 Cardwell Hall, Manhattan, KS 66506, United States }
\email{chandee@ksu.edu }

\author[X. Li]{Xiannan Li}
\address{Department of Mathematics \\ Kansas State University \\
	138 Cardwell Hall, Manhattan, KS 66506, United States}
\email{xiannan@math.ksu.edu}

\allowdisplaybreaks
\numberwithin{equation}{section}
\selectlanguage{english}
\begin{abstract}
In this paper, we obtain upper bounds for the second moment of $L(u_j \times \phi, \frac{1}{2} + it_j)$, where $\phi$ is a Hecke Maass form for $SL(4, \mathbb Z)$, and $u_j$ is taken from an orthonormal basis of Hecke-Maass forms on $SL(2, \mathbb{Z})$ with eigenvalue $1/4 + t_j^2$. The bounds are consistent with the Lindel\"{o}f hypothesis. Previously these types of upper bounds are available for only $GL(n) \times GL(2)$, where $n \leq 3.$ 
\end{abstract}

\maketitle  
\section{Introduction}   
Statistical distribution of the values of $L$-functions are fundamentally related to interesting arithmetic objects.  Here, we are interested in understanding a $GL(4) \times GL(2)$ Rankin-Selberg $L$-functions at special points.  To be precise, we shall study the second moment of $L(u_j \times \phi, \frac{1}{2} + it_j)$, where $\phi$ is a Hecke Maass form for $SL(4, \mathbb Z)$ of type $(\nu_1, \nu_2, \nu_3)$, and $u_j$ is taken from an orthonormal basis of Hecke-Maass forms on $SL(2, \mathbb{Z})$ with eigenvalue $1/4 + t_j^2$.  

The point $\frac{1}{2} + it_j$ is a zero of the Selberg zeta function, and the question of how certain $L$-functions behave at these points appears in the work of Phillips and Sarnak \cite{PS} on deformation of cusp forms.  These points are also distinguished in the analytic theory of $L$-functions for leading to conductor dropping.  The latter phenomenon has made subconvex results at these points quite difficult to achieve through moments computation.  

In particular, the conductor dropping necessitates understanding higher moments when deriving subconvexity results while making high moments apparently more achievable.  The latter statement should be viewed critically - in particular, the study of such families introduces unfamiliar and delicate problems.  It is of great interest to understand how the difficulty scales.  In this direction, Luo \cite{Luo} showed that
\begin{align*}
\sum_{t_j \leq T} \left| L(u_j, \frac{1}{2} + it_j)\right|^6 \ll T^{9/4+\epsilon} \textup{, and}\\
\sum_{t_j \leq T} \left| L(u_j, \frac{1}{2} + it_j)\right|^8 \ll T^{5/2+\epsilon}.
\end{align*}
This should be compared to the conjectured optimal bound of
$$\sum_{t_j \leq T} \left| L(u_j, \frac{1}{2} + it_j)\right|^{2k} \ll T^{2+\epsilon},$$  for all $k\ge 0$.  Achieving the above bound for all $k$ implies the deep Lindel\"{o}f hypothesis, while subconvexity follows if the bound is achieved for any $k>4$.  Luo's work was based on a large sieve type inequality he derived for this family and the power loss in Luo's result  arises from suboptimal large sieve bound.  This is a superficial indication of deeper complexities in this $GL(2)$ family which are still not well understood.  Later, Young showed that
\begin{align*}
\sum_{t_j \leq T} \left| L(u_j, \frac{1}{2} + it_j)\right|^6 \ll T^{2+\epsilon},
\end{align*}thereby achieving essentially the optimal bound for the sixth moment.  This resulted not from general improvements to the large sieve bound, but rather the use of Fourier analytic techniques which use specific information about the coefficients involved.  It should be noted that the Young's sixth moment result above translates without change to give the same quality bound for the second moment of a $GL(3)\times GL(2)$ family.  Specifically, Young \cite{Young} proved that
$$
\sum_{t_j \leq T} \left| L(F \times u_j, \frac{1}{2} + it_j)\right|^2 \ll T^{2+\epsilon},
$$for $F$ a fixed $GL(3)$ Hecke Maass form.

Here, our focus shall be on the second moment of $GL(4) \times GL(2)$ $L$-functions.  To be precise, the main object of this paper is to prove the following theorem. 
\begin{thm} \label{thm:mainboundforGL4GL2} With notations as above, we have
	$$ \sum_{t_j \leq T} \left| L(u_j \times \phi, \frac{1}{2} + it_j)\right|^2 \ll T^{2 + \epsilon}. $$
\end{thm}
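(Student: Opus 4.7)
The plan is to combine the approximate functional equation with the $GL(2)$ Kuznetsov trace formula and $GL(4)$ Voronoi summation, exploiting the conductor dropping at $s=\tfrac12+it_j$. At this point the eight archimedean factors of $\Lambda(u_j\times\phi,s)$ pair the two $GL(2)$ parameters $\pm it_j$ with the four $GL(4)$ parameters, so four factors remain bounded while the other four have size $\asymp t_j$. The analytic conductor is thus $\asymp t_j^4$, and the approximate functional equation represents $L(u_j\times\phi,\tfrac12+it_j)$ as a smoothly truncated Dirichlet polynomial of length $\ll T^{2+\eps}$ in the Rankin--Selberg coefficients $\lambda_{u_j\times\phi}(n)$, plus a dual term of similar shape. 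After squaring, dyadically localising $t_j\asymp T$ with a smooth weight $h$, and separating variables by a Mellin--Barnes contour, it suffices to show
\est{
\sum_{m,n\ll T^{2+\eps}}\frac{w(m,n)}{\sqrt{mn}}\sum_{j}\lambda_{u_j\times\phi}(m)\overline{\lambda_{u_j\times\phi}(n)}(n/m)^{it_j}h(t_j)\ \ll\ T^{2+\eps},
}
with $w$ a smooth bounded weight.

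Next I would expand $\lambda_{u_j\times\phi}(n)$ via the standard Dirichlet series factorisation of $L(u_j\times\phi,s)$ as a linear combination of products $A_\phi(\cdot)\,\lambda_{u_j}(\cdot)$ indexed by auxiliary integer parameters. This separates the $u_j$-data from the fixed $\phi$-data, and the inner spectral sum becomes $\sum_j \lambda_{u_j}(a)\overline{\lambda_{u_j}(b)}(n/m)^{it_j}h(t_j)$ for various integers $a,b$. The $GL(2)$ Kuznetsov formula converts this into a diagonal term $a=b$ plus a Kloosterman contribution $\sum_c c^{-1}S(a,b;c)\mathcal{K}_h(4\pi\sqrt{ab}/c)$, where $\mathcal{K}_h$ is the relevant Bessel transform of $h$.

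The diagonal, combined with the standard Rankin--Selberg mean-square $\sum_{n\leq X}|A_\phi(\cdot)|^2\ll_\phi X(\log X)^{O(1)}$ coming from the pole of $L(\phi\times\widetilde\phi,s)$ at $s=1$, together with the Weyl law mass $\int h(t)\,dt \asymp T^2$, produces the target size $T^{2+\eps}$. For the off-diagonal I would apply $GL(4)$ Voronoi summation in the $A_\phi$ variable, converting the Kloosterman transform into a dual sum over $A_\phi(\cdots)$ twisted by rank-three hyper-Kloosterman sums and weighted by a composite integral transform built from $\mathcal{K}_h$. A stationary phase analysis of this transform should tightly localise both the Kloosterman modulus $c$ and the dual $GL(4)$ length; Deligne's bound for the hyper-Kloosterman sums, together with a Cauchy--Schwarz rearrangement and the $GL(2)$ spectral large sieve, should then close the estimate at the desired threshold.

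The principal obstacle is precisely this off-diagonal analysis. Unlike the $GL(3)$ Voronoi used by Young for $GL(3)\times GL(2)$, the $GL(4)$ Voronoi produces rank-three hyper-Kloosterman sums and a substantially more delicate integral transform. There is essentially no slack between the trivial estimate and the Lindel\"{o}f target after the AFE, so every manipulation must yield almost sharp cancellation; matching the heuristically correct count of contributing moduli $c$ and dual length, while uniformly controlling the oscillation of the composite transform in all parameters, is the technical core of the argument.
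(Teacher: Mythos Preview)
Your architecture (approximate functional equation with conductor $\asymp t_j^4$, Kuznetsov, $GL(4)$ Voronoi on the off-diagonal) matches the paper's, and you correctly identify the off-diagonal as the crux. But two specific claims in your resolution fail, and these are exactly where the paper's work lives.

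First, the assertion that stationary phase ``tightly localises \ldots the dual $GL(4)$ length'' is false in the regime that matters: after $GL(4)$ Voronoi the dual sum over $m$ can be essentially as long as the original $n$-sum (unlike Young's $GL(3)$ case, where the dual sum is effectively absent). The paper's main innovation is to observe that the twist $n^{it_j}$, via an auxiliary $u$-integral, forces a narrow-region constraint $|n_1-n_2|\ll RT^{1+\eps}/|U|$ after opening the square, and then to track how this constraint propagates through Voronoi: the phases $e\bigl(4(m_iy_i)^{1/4}\ell^{1/2}/(r_1 d_1^{1/4}d_2^{1/2})\bigr)$ in the Voronoi asymptotic force the \emph{dual} variables to satisfy $|m_1-m_2|\ll \cR M$ for a small parameter $\cR\ll T^{-\eps_1/4}$. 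Exploiting this dual narrow region via a dissection into short intervals and a Taylor expansion (to separate the modulus $r_1$ from $m$) is what finally feeds into the additive large sieve and closes the ``large $m$'' range. Second, the paper does not use Deligne's bound on the hyper-Kloosterman sums; a pointwise Deligne bound would lose a factor of the modulus that the problem cannot afford. Instead the paper keeps the positive form $\sum_j|\sum_n a_n\lambda_j(n)n^{-it_j}|^2$, quotes Young's packaged Kuznetsov theorem to reach $|\sum_n a_n S(k,n;r)e(un/rT)|^2$, applies Poisson in the outer $k$-variable, and only then Voronoi in $n$; afterwards, completing the residue sums $a_1$ and $x_1$ by orthogonality collapses the rank-three hyper-Kloosterman into a single additive character $e(mx/r_1)$, at which point the ordinary large sieve applies. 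You also need the nontrivial input $\sum_{\ell\sim L}|A(1,\ell,1)|^2\ll L$, which relies on Kim's automorphy of the exterior square on $GL(4)$ and is required to control the auxiliary Hecke parameter $\ell$.
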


The techniques underlying the proof of our Theorem \ref{thm:mainboundforGL4GL2} can be modified to give the following bound on the eighth moment of the $GL(2)$ $L$-functions:

$$ \sum_{t_j \leq T} \left| L(u_j, \frac{1}{2} + it_j)\right|^8 \ll T^{2 + \epsilon}. $$

As mentioned before, these bounds are consistent with the Lindel\"{o}f hypothesis. Recently, the authors proved an analogous result for the eighth moment of the family of holomorphic modular forms with respect to the congruence subgroup $\Gamma_1(q)$ in \cite{CL3}.  Of course, the basic structure of the two families are radically different.  

In \S \ref{sec:initialsetup} to \S \ref{sec:IRKinitial} we reduce the moment problem in Theorem \ref{thm:mainboundforGL4GL2} to the proof of Lemma \ref{lem:mainlem2}, which is the main innovation in this paper.  We provide a sketch of the proof of Lemma \ref{lem:mainlem2} in \S \ref{subsec:sketch}.  

The initial reduction in \S \ref{sec:initialsetup} to \S \ref{sec:IRKinitial} is unexpectedly complicated by the unwieldly Hecke relations for $GL(4)$ and the need for bounds of the form $\sum_{\ell \sim L} |A(1, \ell, 1)|^2 \ll L$.  Interestingly, note that the latter bound depends not only on Rankin-Selberg theory, but also the work of Kim on functoriality of the exterior square on $GL(4)$ \cite{Kim}.


\section{Initial Setup} \label{sec:initialsetup}

We begin by introducing more precise notation.  As in definition 9.4.3 in \cite{Goldfeld}, for $\tRe s>1$, the Godement-Jacquet $L$-function associated to $\phi$ is 
$$ L(\phi, s) = \sum_{n = 1}^\infty \frac{A( 1, 1, n)}{n^s} = \prod_p \left( 1 - \frac{A(1, 1, p)}{p^s} + \frac{A(1, p, 1)}{p^{2s}} - \frac{A(p, 1, 1)}{p^{3s}} + \frac{1}{p^{4s}} \right)^{-1}.$$
Here, we normalize the coefficients $A(m_1, m_2, m_3)$ by setting $A(1, 1, 1) = 1.$ The dual Maass form $\widetilde{\phi}$ is of type $(\nu_3, \nu_2, \nu_1)$ and the  Fourier coefficient is  $\overline{A(m_1, m_2, m_3)} = A(m_3, m_2, m_1)$. Hence 
$$  L(\widetilde{ \phi}, s) = \sum_{n = 1}^\infty \frac{A( n,1,1)}{n^s} = \prod_p \left( 1 - \frac{A(p, 1, 1)}{p^s} + \frac{A(1, p, 1)}{p^{2s}} - \frac{A(1, 1, p)}{p^{3s}} + \frac{1}{p^{4s}} \right)^{-1}. $$ 
Now let
$$ G_{\nu_1, \nu_2, \nu_3}(s) = \pi^{-2s} \prod_{i = 1}^4 \Gamma\left( \frac{s - \alpha_i}{2}\right),$$
where 
\es{\label{def:alphai}\alpha_1 &= \frac{3}{2} - \nu_1 - 2\nu_2 - 3\nu_3 \\
	\alpha_2 &= -\frac 32 + 3\nu_1 + 2\nu_2 + \nu_3 \\
	\alpha_3 &= -\frac 12 - \nu_1 + 2\nu_2 + \nu_3 \\
	\alpha_4 &= \frac 12 - \nu_1 - 2\nu_2 + \nu_3. }
From Theorem 10.8.6 in \cite{Goldfeld}, the functional equation for $L(\phi, s)$ is
$$ G_{\nu_1, \nu_2, \nu_3}(s)L(\phi, s)  =   G_{\nu_3, \nu_2, \nu_1}(1-s)L(\widetilde{ \phi}, 1-s)$$

Recall that $(u_j)$ forms an orthonormal basis of Hecke-Maass cusp forms on $SL(2, \mathbb Z)$ with corresponding Laplace eigenvalues $1/4 + t_j^2$.  As usual, we write $\lambda_j(n)$ for the Hecke eigenvalue of the $n^{th}$ Hecke operator of the form $u_j$. For $\tRe s> 1$, $L(u_j, s)$ is given by
$$ L(u_j, s) = \sum_{n = 1}^\infty \frac{\lambda_j(n)}{n^s}.$$
Following the notation in Chapter 12.3 in \cite{Goldfeld}, the Rankin-Selberg $L$-function is defined by
\es{\label{def:Lfnc} L(u_j \times \phi, s) = \sumtwo_{m, n \geq 1}   \frac{\lambda_j(n) A(1,m, n)}{m^{2s}n^s} := \sum_{n = 1}^{\infty} \frac{\mathfrak c(n, u_j \times \phi )}{n^s}.}
For $u_j$ even, the completed $L$-function associated to $L(u_j \times \phi, s)$ is 
\est{\label{eqn:Lambdadef}\Lambda(u_j \times \phi, s) = \gamma(s, u_j \times \phi)    L(u_j \times \phi, s) ,}
where
\es{\label{def:gamma}\gamma(s, u_j \times \phi) := \pi^{-4s} \prod_{i = 1}^4 \Gamma\pr{\frac{s - it_j - \alpha_i}{2}} 
	\Gamma\pr{\frac{s + it_j - \alpha_i}{2}}.}
By Theorem 12.3.6 in \cite{Goldfeld}, the completed $L$-functions satisfies the following functional equation 
$$ \Lambda(u_j \times \phi, s) = \Lambda(u_j \times \widetilde{ \phi}, 1-s).$$

The case where $u_j$ is odd is very similar, the only change being a change in the argument of the Gamma factors appearing in Equation \eqref{eqn:Lambdadef} by $1/2$ which corresponds to the change in the Archimedean factors for $L(s, u_j)$.  We refer the reader to Chapter 12 of \cite{Goldfeld} for more details, and specifically to Section 12.3.6 for the functional equation.  

We start by reducing our second moment to an appropriate form.  Our setup in this section is very similar to the initial setup in Young's work \cite{Young} and we refer to Section 4  - 7 of \cite{Young} for a more detailed exposition.

We will first apply the approximate function equation (e.g. Theorem 5.3 in the book \cite{IK}). Define a smooth function
\est{V\pr{y, s} = \frac{1}{2\pi i} \int_{(3)} y^{-z} \frac{\gamma\left( s + z, u_j \times \phi \right)}{\gamma\pr{s, u_j \times \phi}} \frac{\mathcal H(z)}{z} \>dz, }
where $\gamma(s, u_j \times \phi)$ is defined in Equation (\ref{def:gamma}) and $\mathcal H(z)$ is an entire function with rapid decay along vertical lines. 
Moreover, let $\gamma^*(s, u_j \times \widetilde{ \phi})$ be a factor in the functional equation 
$$ \Lambda(u_j \times \widetilde{ \phi}, s) =  \gamma^*(s, u_j \times \widetilde{ \phi})L(u_j \times \widetilde{ \phi}, s),$$
and $V^*\left(y, s\right)$ is similar to $V(y, s)$ but replacing $\gamma(s, u_j \times \phi)$ with $\gamma^*(s, u_j \times \widetilde{ \phi}).$  Then the approximate functional equation gives that for any $Y > 0$
\est{ L\pr{u_j \times \phi, \frac{1}{2} + it_j } = \sum_{n} \frac{\mathfrak c(n, u_j \times \phi)}{n^{\frac{1}{2} + it_j}} V\left(\frac nY, \frac 12 + it_j\right) + \epsilon_j \sum_n \frac{\mathfrak c(n, u_j \times \widetilde{ \phi})}{n^{\frac{1}{2} - it_j}} V^*\left( nY, \frac 12 - it_j\right),}
where $|\epsilon_j | = 1.$ Due to the Stirling's formula, the leading term of an asymptotic formula of $V\pr{y, \frac 12 + it_j}$  is
$$ V_1 \pr{ y, t_j} = \frac{1}{2\pi i} \int_{(3)} \pr{\frac{t_j^2}{y}}^{z} \frac{h_0(z)}{z} \> ds$$
where $h_0(z)$ is a holomorphic function with exponential decay as $\textrm{Im}(z) \rightarrow \infty$. We also define $V_2(y,t_j)$ to be the corresponding leading term of an asymptotic formula of $V^*$, and $V_2$ has the same form as $V_1.$ 

It suffices to consider the leading terms of $V$ and $V^*$ in what follows.  Since there are $\ll \log T$ dyadic intervals up to $T^{2+\epsilon}$, we further restrict $t_j \sim T$ and  $n \sim N$ where $a \sim A$ is shorthand for $A < a \leq 2A$.  For simplicity, we consider only even forms $u_j$ - the difference for odd forms is a minor difference in the functional equation.

After applying Cauchy-Schwarz inequality, these terms are
\est{ \sum_{\substack{t_j \sim T \\ u_j \ \textrm{even}}}  \left( \left| \sum_{n \sim P } \frac{\mathfrak c(n, u_j \times \phi)}{n^{\frac 12 + it_j}} w_1 \pr{\frac nP} V_1\pr{\frac nY, t_j}  \right|^2  + \left| \sum_{n \sim P } \frac{\mathfrak c(n, u_j \times \widetilde{ \phi})}{n^{\frac 12 - it_j}} w_1 \pr{\frac nP} V_1\pr{ nY, t_j}  \right|^2  \right),}
where $w_1$ is a compactly supported smooth function which gives a smooth partition of unity, i.e. $\sum_{j \geq 1} w_1 \pr{\frac{t}{2^j}} = 1.$ By the change of variables and combining smooth functions, we need to consider
\est{ \sum_{\substack{t_j \sim T \\ u_j \ \textrm{even}}} \left| \sum_{n \sim P } \frac{\mathfrak c(n, u_j \times \phi)}{n^{\frac 12 + it_j}} w_2 \pr{\frac nP}   \right|^2 }  
where $P \ll T^{2 + \epsilon}$ and $w_2$ has the same properties as $w_1$. 

Next we will add the weight $\alpha_j = |\rho_j(1)|^2 w(t_j)$, where $\rho_j(1)$ is the Fourier coefficient of the $u_j(z)$, and the nonnegative smooth function $w(t_j)$ is defined by
$$ w(t_j) = 2 \frac{\sinh \pr{\pr{\pi - \frac 1T}t_j}}{\sinh \pr{2\pi t_j}}. $$
We obtain that $ \alpha_j \sim \frac{|\rho_j(1)|^2}{\cosh(\pi t_j) } \exp(- t_j/T)$ when $t_j \sim T$, and $t_j^{-\epsilon} \ll \alpha_j \ll t_j^\epsilon$ (see e.g. (5.4) and (5.5) in \cite{Young} ). The introduction of this weight normalizes our forms for more convenient application of Kuznetsov's formula. Also, we remove the conditions $t_j \sim T$ and $u_j$ is even, allowed by positivity of these terms.  Hence it is enough to show that
\es{ \label{eqn:Hellsum}H := \sum_{t_j} w(t_j) |\rho_j(1)|^2 \left| \sum_{n \sim P } \frac{\mathfrak c(n, u_j \times \phi)}{n^{\frac 12 + it_j}} w_2 \pr{\frac nP}   \right|^2  \ll T^{2 + \epsilon}. } 

By Equation (\ref{def:Lfnc}), Cauchy-Schwarz inequality and $P \ll T^{2 + \epsilon}$, we have that 

\begin{equation*}
H \ll T^\epsilon \sum_{\ell \ll \sqrt P} \frac{H_\ell}{\ell} 
\end{equation*}

where 
$$ H_\ell = \sum_{t_j} w(t_j) |\rho_j(1)|^2 \left| \sum_{n \sim N} w_2\left(\frac n N \right) \frac{A(1, \ell,n ) \lambda_j(n)}{n^{1/2 + it_j}} \right|^2,  $$
and $N = P/\ell^2 $. To prove Equation (\ref{eqn:Hellsum}), it suffices to show the following two results.

\begin{prop} \label{prop:mainprop} With the above notations and  $N = \frac{P}{\ell^2} \ll \frac{T^{2 + \epsilon}}{\ell^2}$, we have
	$$ H_{\ell} \ll T^{2 + \epsilon} \left( 1 + \sum_{n \sim N} \frac{|A(1, \ell, n)|^2}{n}\right). $$
\end{prop}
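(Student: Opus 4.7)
The plan is to expand the square in $H_\ell$ and apply the Kuznetsov trace formula, for which the spectral weight $|\rho_j(1)|^2 w(t_j)$ is precisely designed. Writing
\[
H_\ell = \sum_{m,n \sim N} \frac{A(1,\ell,m)\overline{A(1,\ell,n)}}{\sqrt{mn}}\, w_2\pr{\tfrac{m}{N}} w_2\pr{\tfrac{n}{N}}\, \mathcal S(m,n), \qquad \mathcal S(m,n) = \sum_{t_j} w(t_j)\,|\rho_j(1)|^2\, \lambda_j(m)\lambda_j(n)\,\pr{\tfrac{n}{m}}^{it_j},
\]
I view $\mathcal S(m,n)$ as a Kuznetsov sum with test function $h_{m,n}(t) = w(t)(n/m)^{it}$. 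Adding back the analogous non-negative holomorphic and Eisenstein contributions to $H_\ell$ only enlarges it, and Kuznetsov's formula then equates the enlarged spectral side with a diagonal term plus a Kloosterman-Bessel term, which will provide an upper bound for $H_\ell$.

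The diagonal contribution ($m=n$) is proportional to $\int_{\mathbb R} w(t)\, t \tanh(\pi t)\, dt \ll T^{2+\epsilon}$, since $w(t)$ effectively truncates $|t|$ to the region $|t| \ll T$. This produces a contribution to $H_\ell$ of size
\[
\ll T^{2+\epsilon} \sum_{n \sim N} \frac{|A(1,\ell,n)|^2}{n},
\]
which accounts for the second term of the proposition.

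The off-diagonal piece takes the shape
\[
\sum_{c\ge 1}\frac{1}{c} \sum_{m,n\sim N}\frac{A(1,\ell,m)\overline{A(1,\ell,n)}}{\sqrt{mn}}\, w_2\pr{\tfrac{m}{N}}\, w_2\pr{\tfrac{n}{N}}\, S(m,n;c)\, \widetilde h_{m,n}\pr{\tfrac{4\pi\sqrt{mn}}{c}},
\]
where $\widetilde h_{m,n}$ is the Bessel integral transform of $h_{m,n}$. A standard stationary-phase analysis of $\widetilde h_{m,n}$ (as in \cite{Young}) shows it is essentially supported on arguments of size $\asymp T$ with magnitude $\ll T^{1+\epsilon}$, restricting the significant $c$-range to $c \asymp \sqrt{mn}/T$ and, via stationary phase in $t$, the $m,n$ sum to $|m-n|\ll \sqrt{mn}\,T^{-1+\epsilon}$. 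Combining these with Weil's bound $|S(m,n;c)|\ll (m,n,c)^{1/2}c^{1/2+\epsilon}$ and the on-average Rankin-Selberg estimate $\sum_{n\ll X}|A(1,\ell,n)|^2 \ll X^{1+\epsilon}$ (a consequence of Rankin-Selberg for $GL(4)$ together with Kim's exterior-square functoriality \cite{Kim}) yields an off-diagonal contribution $\ll T^{2+\epsilon}$, supplying the ``$1$'' term of the claim.

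The main obstacle is the bookkeeping inside the off-diagonal analysis: carefully tracking the support and size of the Bessel transform $\widetilde h_{m,n}$, exploiting the reciprocity between the $c$-length and the effective $|m-n|$-length, and ensuring that Weil's bound together with the oscillatory savings supplies the clean $T^{2+\epsilon}$ bound uniformly in $\ell$. This computation is nonetheless a standard Kuznetsov exercise modeled on Young's treatment of $GL(3)\times GL(2)$ in \cite{Young}, and it requires no cancellation in the Hecke coefficients beyond what Rankin-Selberg provides on average.
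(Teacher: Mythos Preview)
Your initial reduction via Kuznetsov is fine and in fact matches the paper's first step: the diagonal indeed produces the term $T^{2+\epsilon}\sum_{n\sim N}|A(1,\ell,n)|^2/n$, and the off-diagonal becomes a sum of Kloosterman sums against a Bessel transform, exactly as in Young's framework (this is the content of Theorem~\ref{thm:young7}). The genuine gap is in your treatment of the off-diagonal. Using only Weil's bound on $S(m,n;c)$ together with the support/size of the Bessel transform reproduces at best Luo's large sieve inequality, which for $N\asymp T^2$ yields a bound of size $T^{5/2+\epsilon}$, not $T^{2+\epsilon}$. A quick back-of-the-envelope check: with $c\asymp N/T$, $|\tilde h|\ll T$, $|m-n|\ll N/T$, and $|S(m,n;c)|\ll c^{1/2+\epsilon}$, one gets a contribution of order $(N/T)^{1/2}T\cdot\frac{1}{T}\sum_{m\sim N}|A(1,\ell,m)|^2$, which is $\asymp T^{5/2}$ when $N=T^2$. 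This is precisely the barrier that forces the paper's much longer argument.

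Your final sentence --- that the computation ``requires no cancellation in the Hecke coefficients beyond what Rankin--Selberg provides on average'' --- is the crux of the error. Young's $GL(3)\times GL(2)$ argument is \emph{not} a Weil-bound exercise; it opens the Kloosterman sum, applies Poisson in one variable, and then crucially applies $GL(3)$ Voronoi to the sum over $n$, exploiting the automorphic structure of the coefficients. The present paper follows the same architecture but with the $GL(4)$ Voronoi formula (Theorem~\ref{thm:voronoi}), which introduces hyper-Kloosterman sums and a dual variable $m$ of length comparable to $N$. The bulk of the work (Sections~\ref{sec:IRKinitial}--\ref{sec:big}, culminating in Propositions~\ref{prop:sm} and~\ref{prop:big}) is then a delicate analysis of this dual sum via the large sieve and a further stationary-phase dissection. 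None of this is recoverable from Weil's bound alone.
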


\begin{lem} \label{lem:sum} Let $K, L$ be fixed.  Then
\begin{equation}
\sum_{k \sim K}\sum_{\ell \sim L} |A(k, \ell, 1)|^2 \ll (KL)^{1+\epsilon}.
\end{equation}
\end{lem}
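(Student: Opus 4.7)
The plan is to establish absolute convergence of the double Dirichlet series
$$D(s, w) := \sum_{k, \ell \geq 1} \frac{|A(k, \ell, 1)|^2}{k^s \ell^w}$$
at $(s, w) = (1 + \eps, 1 + \eps)$. Granted this, Lemma \ref{lem:sum} follows from the trivial estimate
$$\sum_{k \sim K} \sum_{\ell \sim L} |A(k, \ell, 1)|^2 \leq 4(KL)^{1 + \eps} D(1 + \eps, 1 + \eps) \ll (KL)^{1 + \eps}.$$
By the joint Hecke multiplicativity of the $GL(4)$ Fourier coefficients --- namely $A(kk', \ell\ell', 1) = A(k, \ell, 1) A(k', \ell', 1)$ whenever $(k\ell, k'\ell') = 1$ --- the series $D(s, w)$ factors as an Euler product $\prod_p D_p(s, w)$, whose dominant contributions at each prime are $|A(p, 1, 1)|^2 p^{-s}$ and $|A(1, p, 1)|^2 p^{-w}$.

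Summed over $p$, these leading contributions yield
$$\log L(\phi \times \widetilde\phi, s) + \log L(\wedge^2 \phi \times \widetilde{\wedge^2 \phi}, w) + O(1).$$
The first piece is finite at $s = 1 + \eps$ by standard Rankin--Selberg theory for $GL(4) \times GL(4)$; the second piece requires Kim's theorem that $\wedge^2 \phi$ is an automorphic form on $GL(6)$, since only then is $L(\wedge^2\phi \times \widetilde{\wedge^2\phi}, w)$ a genuine $L$-function on $GL(6) \times GL(6)$, absolutely convergent for $\Re w > 1$. The comparison at primes is clean because $\lambda_{\wedge^2 \phi}(p) = \sum_{i<j} \alpha_{p,i}\alpha_{p,j} = A(1, p, 1)$.

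The main obstacle is controlling the higher-order error $E_p(s, w)$ from $(a, b)$ with $a + b \geq 2$. Writing $A(p^a, p^b, 1) = s_{(a+b, b, 0, 0)}(\alpha_{p, 1}, \ldots, \alpha_{p, 4})$ as a Schur polynomial and invoking Kim--Sarnak type bounds on Satake parameters together with the identity $A(1, p^a, 1) = \lambda_{\wedge^2 \phi}(p^a) - \lambda_{\wedge^2 \phi}(p^{a-2})$, which follows from the $GL(4)$-decomposition $\mathrm{Sym}^a(\wedge^2 V) \cong \bigoplus_{b = 0}^{\lfloor a/2 \rfloor} V_{(a-b, a-b, b, b)}$, one shows $\sum_p E_p(1 + \eps, 1 + \eps) < \infty$. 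An alternative route avoiding the full Dirichlet series machinery is to first establish separately $\sum_{k \sim K} |A(k, 1, 1)|^2 \ll K^{1+\eps}$ (standard Rankin--Selberg for $\phi \times \widetilde\phi$) and $\sum_{\ell \sim L} |A(1, \ell, 1)|^2 \ll L^{1+\eps}$ (via M\"obius inversion of the identity $\lambda_{\wedge^2 \phi}(m) = \sum_{d^2 \mid m} A(1, m/d^2, 1)$ reducing to Rankin--Selberg for $\wedge^2 \phi \times \widetilde{\wedge^2 \phi}$), and then use the Hecke relation $A(k, 1, 1)\, A(1, \ell, 1) = \sum_{d \mid (k, \ell)} A(k/d, \ell/d, d)$ together with Cauchy--Schwarz to peel off the main diagonal contribution $|A(k, 1, 1)|^2 |A(1, \ell, 1)|^2$ from a convergent tail.
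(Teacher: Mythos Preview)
Your proposal identifies the correct ingredients --- Rankin--Selberg for $\phi \times \widetilde\phi$, Kim's automorphy of $\wedge^2\phi$, and the Hecke relations --- but neither route is carried to completion, and the first has a genuine gap.

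In the Dirichlet-series approach, the difficulty is the mixed local terms with $a,b\ge 1$. Your identity $A(1,p^b,1)=\lambda_{\wedge^2\phi}(p^b)-\lambda_{\wedge^2\phi}(p^{b-2})$ together with Luo--Rudnick--Sarnak on $GL(6)$ does dispose of the pure $(0,b)$ contribution, and LRS on $GL(4)$ handles the pure $(a,0)$ contribution. But for $(a,b)=(1,1)$ the Schur polynomial $s_{(2,1,0,0)}$ has total degree $3$, so the LRS bound $|\alpha_{p,i}|\le p^{1/2-1/17}$ only yields $|A(p,p,1)|^2/p^{2+2\eps}\ll p^{11/17-2\eps}$, which is not summable over $p$. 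More seriously, with only $\theta<1/2$ the local factor $D_p(1+\eps,1+\eps)$ itself need not be finite, since the crude bound gives $|s_{(a+b,b,0,0)}(\alpha)|^2 p^{-(a+b)}\ll p^{(4\theta-1)b}$. One can rescue the $(1,1)$ term via $A(p,p,1)=A(p,1,1)A(1,p,1)-A(1,1,p)$, combining a pointwise LRS bound on one factor with convergence of $\sum_p|A(1,p,1)|^2/p^{1+\eps}$, but extending this to all $(a,b)$ requires a systematic decomposition --- essentially the M\"obius inversion the paper uses --- and your ``one shows $\sum_p E_p<\infty$'' does not supply it.

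Your alternative route is closer in spirit to the paper, but ``peel off the main diagonal from a convergent tail'' is too vague: the relation $A(k,1,1)A(1,\ell,1)=\sum_{d\mid(k,\ell)}A(k/d,\ell/d,d)$ produces terms $A(k/d,\ell/d,d)$ with $d>1$, which are not of the original shape $A(\cdot,\cdot,1)$. The paper instead proves the exact inversion
\[
A(k,\ell,1)=\sum_{\substack{d\mid(k,\ell)\\ e\mid(d,\,k/d)}}\mu(d)\mu(e)\,A\!\left(\tfrac{k}{de},1,1\right)A\!\left(1,\tfrac{\ell}{d},\tfrac{d}{e}\right),
\]
applies Cauchy--Schwarz, and reduces to $\sum_{k\sim K}|A(k,1,1)|^2\ll K$ together with $\sum_{\ell\sim L}|A(1,\ell,f)|^2\ll fL$ for \emph{squarefree} $f$. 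The latter (their Lemma~\ref{lem:lsumbdd}) is not immediate: it is proved by induction on $f$, using the Hecke relation for $A(1,1,p)$ and the pointwise LRS bound $|A(1,1,p)|\le 4p^{1/2-1/17}$ to make each step contractive, with the base case $f=1$ being precisely your exterior-square input (their Lemma~\ref{lem:adjsumbdd}). This induction is the missing engine in your sketch.
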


Thus from Proposition \ref{prop:mainprop} and Lemmma \ref{lem:sum}, we have that 
\est{ H \ll T^{2 + \epsilon} \sum_{ \ell \leq \sqrt P} \sum_{n \sim \frac P{\ell^2}} \frac{|A(1, \ell, n)|^2}{n\ell} \ll T^{2 + \epsilon}}
as desired. 

The proof of Lemma \ref{lem:sum} is more involved than might appear at first sight, and we will prove it in \S \ref{sec:lemsumpf}.  The proof of the main Proposition \ref{prop:mainprop} will be the focus of the rest of the paper.
\\

{\bf Remarks on notation.} As usual, we will use $\epsilon$ to denote a small positive real number, not necessarily the same at each occurrence. At some point in the
paper a parameter $ \epsilon_1 > 0$ will be fixed once it is introduced and be chosen later. Moreover $\e{x} = \exp(2\pi i x)$.

\section{Ramanujan on average} \label{sec:lemsumpf}

We first prove Lemma \ref{lem:sum}, beginning with a Mobius inversion type result which was used by Xiaoqing Li and Young in \cite{LiYoung} in an analogous $GL(3)$ case.  We will show that
\begin{equation}\label{eqn:mobinversion}
\sum_{\substack{d |(k, \ell)\\ e|(d, k/d)}} \mu(d) \mu(e) A\left(\frac{k}{de}, 1, 1\right) A\left(1, \frac \ell d, \frac de\right) = A(k, \ell, 1).
\end{equation}
Indeed, the Hecke relations give (for instance, see Theorem 9.3.11 of \cite{Goldfeld})
\begin{equation} \label{heckerel}
A(k, 1, 1) A(1, \ell, d) = \sum_{\substack{c_1c_2|k\\c_1|\ell,\  c_2|d}} A\left(\frac{k}{c_1c_2}, \frac {\ell}{c_1}, \frac{dc_1}{c_2}  \right),
\end{equation}from which \eqref{eqn:mobinversion} follows by standard Mobius inversion type manipulations via
\begin{align*}
\sum_{\substack{d |(k, \ell)\\ e|(d, k/d)}} \mu(d) \mu(e) A\left(\frac{k}{de}, 1, 1\right) A\left(1, \frac \ell d, \frac de\right) &= \sum_{\substack{d|(k, \ell)\\e|(d, k/d)\\c_1|\ell/d, \ c_2|d/e\\c_1c_2|k/de}} \mu(d)\mu(e) A\left(\frac{k}{dec_1c_2}, \frac{\ell}{dc_1}, \frac{dc_1}{ec_2} \right)\\
&= \sum_{\substack{d|(k, g_1)\\g_1|\ell, \ g_2|d\\ g_1g_2|k}} \mu(d) A\left(\frac{k}{g_1g_2}, \frac{\ell}{g_1}, \frac{g_1}{g_2} \right) \sum_{e|g_2} \mu(e)\\
&= \sum_{g_1|(\ell, k)} A\left(\frac{k}{g_1}, \frac{\ell}{g_1}, g_1 \right) \sum_{d|g_1} \mu(d) = A(k, \ell , 1).
\end{align*}

Now, by \eqref{eqn:mobinversion} and Cauchy-Schwarz,
\begin{align*}
|A(k, \ell, 1)|^2 
&= \left|\sum_{\substack{d |(k, \ell)\\ e|(d, k/d)}} \mu(d) \mu(e) A\left(\frac{k}{de}, 1, 1\right) A\left(1, \frac \ell d, \frac de\right)\right|^2 \\
&\le \left(\sum_{\substack{d |(k, \ell)\\ e|(d, k/d)}} 1^2 \right) \left(\sum_{\substack{d|(k, \ell)\\ e|(d, k/d)}} |\mu(d)| \left|A\left(\frac{k}{de}, 1, 1\right)\right|^2  \left|A\left(1, \frac \ell d, \frac de\right)\right|^2 \right) \\
&\ll K^\epsilon \sum_{\substack{d|(k, \ell)\\ e|(d, k/d)}} |\mu(d)| \left|A\left(\frac{k}{de}, 1, 1\right)\right|^2 \left|A\left(1, \frac \ell d, \frac de\right)\right|^2.
\end{align*}
Hence,

\begin{align*}
\sum_{k \sim K}\sum_{\ell \sim L} |A(k, \ell, 1)|^2 
&\ll K^\epsilon \sum_{d\le \min(L, K)} |\mu(d)| \sum_{d = ef} \left(\sum_{k \sim K/de}\left|A\left(k, 1, 1\right)\right|^2 \sum_{\ell \sim L/d}    \left|A\left(1, \ell , f\right)\right|^2\right)\\
&\ll K^\epsilon \sum_{d\le \min(L, K)} |\mu(d)| \sum_{d = ef} \left(\frac{K}{de} \sum_{\ell \sim L/d}    \left|A\left(1, \ell , f\right)\right|^2\right)
\end{align*}since 
$$ \sum_{k \sim K/de}\left|A\left(k, 1, 1\right)\right|^2 \ll \frac{K}{de},
$$Note that in the sum above, for $f|d$, we may assume that $f$ is squarefree, due to the presence of the $|\mu(d)|$ factor.  Thus, Lemma \ref{lem:sum} follows from the following Lemma.

\begin{lem}\label{lem:lsumbdd}
	For any positive squarefree integers $a$ and $f$
	\begin{equation*}
	\sum_{\ell \sim L} |A(a, \ell , f)|^2 \ll afL.
	\end{equation*}	
\end{lem}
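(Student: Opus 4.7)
The plan is to reduce the desired bound to the single key estimate
\[
\sum_{\ell \sim L} |A(1, \ell, 1)|^2 \ll L,
\]
which is the central new input. This bound relies on Kim's theorem \cite{Kim} that the exterior square lift $\wedge^2 \phi$ is an automorphic cuspidal representation on $GL(6)$ (whose leading Fourier coefficient agrees, up to normalization, with $A(1, n, 1)$), combined with the standard Rankin-Selberg $L$-function $L(s, \wedge^2\phi \times \widetilde{\wedge^2\phi})$.

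Granting this, I would first exploit the multiplicativity of $GL(4)$ Hecke coefficients. Since $a$ and $f$ are squarefree, each $\ell$ factors uniquely as $\ell = \ell_0 m$ with $(\ell_0, af) = 1$ and $m \mid (af)^\infty$, yielding
\[
|A(a, \ell, f)|^2 = |A(1, \ell_0, 1)|^2 \cdot |A(a, m, f)|^2.
\]
Interchanging sums and invoking the key estimate on the $\ell_0$-sum gives
\[
\sum_{\ell \sim L} |A(a, \ell, f)|^2 \ll L \sum_{m \mid (af)^\infty} \frac{|A(a, m, f)|^2}{m} = L \prod_{p \mid af} E_p,
\]
where $E_p := \sum_{k \ge 0} p^{-k} |A(p^{v_p(a)}, p^k, p^{v_p(f)})|^2$ and the Euler product expansion uses multiplicativity once more. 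It then suffices to establish the local bound $E_p \ll p^{v_p(a) + v_p(f)}$ for each $p \mid af$.

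For the local bound, I would apply the Hecke relation \eqref{heckerel} at the prime $p$. In the case $v_p(a) = 1,\, v_p(f) = 0$, taking $k = p$, $d = 1$ and $\ell = p^n$ in \eqref{heckerel} yields the recursion
\[
A(p, p^n, 1) = A(p, 1, 1)\, A(1, p^n, 1) - A(1, p^{n-1}, p) \qquad (n \ge 1).
\]
Squaring, applying $|x - y|^2 \le 2|x|^2 + 2|y|^2$, iterating, and using both $|A(p, 1, 1)|^2 \ll p$ (from Rankin-Selberg for $\phi \times \widetilde\phi$) together with the local Euler factor bound $\sum_n |A(1, p^n, 1)|^2/p^n \ll 1$ (from exterior square Rankin-Selberg at $p$) yield $E_p \ll p$. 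The symmetric case $v_p(a) = 0,\, v_p(f) = 1$ follows from the conjugation relation $\overline{A(m_1, m_2, m_3)} = A(m_3, m_2, m_1)$. The case $v_p(a) = v_p(f) = 1$ is more delicate: \eqref{heckerel} here produces
\[
A(p, p^n, p) = A(p, 1, 1)\, A(1, p^n, p) - A(1, p^n, 1) - A(1, p^{n-1}, p^2),
\]
introducing the auxiliary coefficient $A(1, p^{n-1}, p^2)$, which demands a further application of Hecke relations; the resulting coupled system for $|A(p, p^n, p)|^2$ and $|A(p^2, p^n, 1)|^2$ can be solved (using $|A(p^2, 1, 1)|^2 \ll p^2$) to give $E_p \ll p^2$. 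Multiplying over $p \mid af$ finally gives $\prod_{p \mid af} p^{v_p(a) + v_p(f)} = af$, as desired. The main obstacle lies in the bookkeeping for the coupled recursion in the $(1, 1)$ case.
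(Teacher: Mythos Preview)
Your approach is genuinely different from the paper's. Both start from the same base estimate $\sum_{\ell\sim L}|A(1,\ell,1)|^2\ll L$ (the paper's Lemma~\ref{lem:adjsumbdd}, resting on Kim's exterior-square functoriality). From there you factor $\ell=\ell_0 m$ with $(\ell_0,af)=1$, $m\mid(af)^\infty$, and reduce to bounding the local sums $E_p=\sum_{k\ge 0}|A(p^{v_p(a)},p^k,p^{v_p(f)})|^2 p^{-k}$; the paper instead runs an induction on $af\lfloor L\rfloor$, peeling one prime off $f$ (or $a$) at a time via the Hecke relation \eqref{heckerel} and the Luo--Rudnick--Sarnak bound $|A(1,1,p)|\le 4p^{1/2-1/17}$. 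The inductive argument stays entirely in the global setting and never needs to know that any infinite local series converges.

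That convergence issue is the real gap in your sketch. You assert $E_p^{(0,0)}=\sum_n |A(1,p^n,1)|^2 p^{-n}\ll 1$ ``from exterior square Rankin--Selberg at $p$'', and you also need each $E_p^{(1,0)}$, $E_p^{(1,1)}$, $E_p^{(2,0)}$ to be finite a priori---otherwise subtracting $(2/p)E_p^{(1,0)}$ from both sides of your recursion is illegitimate. But the Luo--Rudnick--Sarnak bound on $GL(4)$ only gives $|A(1,p^n,1)|\ll p^{n(1-2/17)}$ (the underlying Schur polynomial has degree $2n$ in the $\alpha_i$), so $|A(1,p^n,1)|^2 p^{-n}\ll p^{n(1-4/17)}$, which \emph{diverges}; the situation for $E_p^{(1,0)}$ is worse. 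What is actually needed is a pointwise Satake bound on the $GL(6)$ lift: by Kim the parameters $\alpha_i\alpha_j$ are those of an automorphic representation on $GL(6)$, so Luo--Rudnick--Sarnak applied to its cuspidal constituents yields $|\alpha_i\alpha_j|\le p^{1/2-1/37}$, whence $|A(1,p^n,1)|=|h_n(\{\alpha_i\alpha_j\})|\ll p^{n(1/2-1/37)}$ and $E_p^{(0,0)}$ converges uniformly. With this extra input in hand (together with a weighted Cauchy--Schwarz $|x-y|^2\le (1+\lambda)|x|^2+(1+\lambda^{-1})|y|^2$ to keep the recursion contractive at $p=2$), your coupled recursions can indeed be closed. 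So your route is salvageable, but it requires a stronger ingredient than you cite; the paper's induction is designed precisely to avoid this detour.
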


Before proving Lemma \ref{lem:lsumbdd}, we first state the following Lemma.

\begin{lem}\label{lem:adjsumbdd}
	\begin{equation*}
	\sum_{\ell\sim L} |A(1, \ell, 1)|^2 \ll L.
	\end{equation*}
\end{lem}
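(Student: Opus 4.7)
My plan is to exploit Kim's theorem \cite{Kim} on the functoriality of the exterior square from $GL(4)$ to $GL(6)$. Let $\Phi = \wedge^2 \phi$ denote this lift, an automorphic representation of $GL(6, \mathbb{A}_{\mathbb{Q}})$, and let $L(s, \Phi) = \sum_n B(n)/n^s$ be its standard $L$-function with $B$ multiplicative. At an unramified prime $p$ with Satake parameters $\alpha_1, \alpha_2, \alpha_3, \alpha_4$ of $\phi$, the Satake parameters of $\Phi$ are $\{\alpha_i \alpha_j : 1 \leq i < j \leq 4\}$, so that $B(p^k) = h_k(\alpha_i \alpha_j : i < j)$ (the complete symmetric polynomial in these six variables) while $A(1, p^k, 1) = s_{(k,k,0,0)}(\alpha_1, \alpha_2, \alpha_3, \alpha_4)$ (the Schur polynomial). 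Triviality of the central character gives $\alpha_1 \alpha_2 \alpha_3 \alpha_4 = 1$.

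The key step is to relate these Dirichlet coefficients via the classical plethysm identity
\[
h_k[e_2] = \sum_{\substack{\lambda \vdash 2k \\ \lambda' \text{ has even parts}}} s_\lambda,
\]
which computes the character of $\mathrm{Sym}^k(\wedge^2 V)$. For $V$ four-dimensional and $\ell(\lambda) \leq 4$, the eligible partitions are exactly $\lambda = (a, a, c, c)$ with $a + c = k$, $a \geq c \geq 0$. Using $\alpha_1\alpha_2\alpha_3\alpha_4 = 1$, one has $s_{(a,a,c,c)}(\alpha) = (\alpha_1 \alpha_2 \alpha_3 \alpha_4)^c \, s_{(a-c, a-c, 0, 0)}(\alpha) = A(1, p^{a-c}, 1)$, yielding
\[
B(p^k) = \sum_{\substack{0 \leq j \leq k \\ j \equiv k \pmod{2}}} A(1, p^j, 1).
\]
This telescopes to $A(1, p^k, 1) = B(p^k) - B(p^{k-2})$ with the convention $B(p^{-1}) = B(p^{-2}) = 0$. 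Globalizing via Hecke multiplicativity gives the clean identity
\[
A(1, \ell, 1) = \sum_{\substack{d \text{ squarefree} \\ d^2 \mid \ell}} \mu(d)\, B(\ell/d^2).
\]

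To conclude, I combine this identity with the standard Rankin-Selberg bound $\sum_{n \leq X} |B(n)|^2 \ll X^{1+\epsilon}$, which follows from analyzing $L(s, \Phi \times \widetilde{\Phi})$ on $GL(6) \times GL(6)$: this $L$-function has a pole of bounded order at $s = 1$ and is otherwise holomorphic on $\mathrm{Re}(s) \geq 1$ by Jacquet--Piatetski-Shapiro--Shalika (invoking Kim's automorphy of $\Phi$ once more). Cauchy-Schwarz then yields
\[
\sum_{\ell \sim L} |A(1, \ell, 1)|^2 \leq \sum_{\ell \sim L} 2^{\omega(\ell)} \sum_{\substack{d \text{ sqf} \\ d^2 \mid \ell}} |B(\ell/d^2)|^2 \ll L^{\epsilon} \sum_{d \leq L^{1/2}} \frac{L}{d^2} \ll L^{1+\epsilon},
\]
absorbing the $\epsilon$-loss to obtain the claim.

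The main obstacle is the plethysm identification: one must extract the decomposition $\mathrm{Sym}^k(\wedge^2 V) = \bigoplus_{a+c=k} V_{(a,a,c,c)}$ for the standard 4-dimensional representation $V$, and convert it into the Dirichlet coefficient identity between $A(1, p^k, 1)$ and $B(p^j)$. This cancellation structure is essential, because a direct application of Rankin-Selberg to $L(s, \phi \times \widetilde{\phi})$ only assigns $A(1, \ell, 1)$ an effective conductor of size $\ell^4$, giving the much weaker bound $\sum_{\ell \sim L} |A(1, \ell, 1)|^2 \ll L^4$. It is precisely Kim's exterior square lift that reduces the effective conductor to $\ell$.
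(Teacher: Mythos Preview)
Your approach is essentially the same as the paper's: both rely on Kim's exterior square functoriality to realize the generating series of $A(1,\ell,1)$ as (essentially) the standard $L$-function of an automorphic representation on $GL(6)$, and then invoke Rankin--Selberg on $GL(6)\times GL(6)$. The paper simply cites Kontorovich (and Bump--Friedberg, Jacquet--Shalika) for the Dirichlet series identification, whereas you derive it by hand via the plethysm $h_k[e_2]=\sum_{\lambda'\ \text{even}} s_\lambda$, obtaining $A(1,p^k,1)=B(p^k)-B(p^{k-2})$; this amounts to an extra factor $\zeta(2s)^{-1}$ compared with the paper's stated identity, a harmless normalization discrepancy. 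One caveat: your final Cauchy--Schwarz step only delivers $\ll L^{1+\epsilon}$, and the phrase ``absorbing the $\epsilon$-loss'' does not literally recover $\ll L$; however, since every downstream application of this lemma in the paper already carries an $\epsilon$-power, the weaker bound is entirely adequate.
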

\begin{proof}
	We note that the exterior square $L$-function $L(s, \phi, \Lambda^2)$ has the Dirichlet series 
	\begin{equation}\label{eqn:exteriorsquareseries}
	L(s, \phi, \Lambda^2) = \sum_{\ell\ge 1} \frac{A(1, \ell, 1)}{\ell^s}.
	\end{equation}
Representations of such $L$-functions were studied by Bump and Friedberg \cite{BF} as well as Jacquet and Shalika \cite{JS}.  We refer to the later work of Kontorovich \cite{Kon} for a relatively elementary proof of \eqref{eqn:exteriorsquareseries} for $GL(n)$.   

Fortunately, we also know that the exterior square is essentially automorphic by the work of Kim.  To be precise, Kim's Theorem A \cite{Kim} tells us that there is an automorphic $L$-function on $GL(6)$ with the same Euler product as $L(s, \phi, \Lambda^2)$ except possibly the local factors at $2$ and $3$.  Standard contour integration of the Rankin-Selberg $L$-function (see Remark 12.1.8 \cite{Goldfeld}) then gives us the result of the Lemma.
\end{proof}

Now we turn to the proof of Lemma \ref{lem:lsumbdd}.  We proceed by induction on $af\floor{L} \ge 1$.  The base case $af = 1$ is covered by Lemma \ref{lem:adjsumbdd} (of course, this also covers the trivial case $af\floor{L} = 1$).  Thus we assume $af >1$ and since $a$ and $f$ are in symmetric positions, we may without loss of generality assume that $f>1$.  Then $p \| f$ for some prime $p$ since $f$ is squarefree.  By Hecke multiplicativity (Theorem 9.3.11 of \cite{Goldfeld}) in (\ref{heckerel}) again, we have
\begin{equation*}
A(a, \ell, fp^{-1}) A(1, 1, p) = \sum_{\substack{c_1 | a\\ c_2|\ell \\ c_1c_2|p}} A\left(\frac{ac_2}{c_1}, \frac{\ell}{c_2}, \frac{f}{c_1c_2} \right).
\end{equation*}Rearranging this, we have
\begin{align}\label{eqn:lsumbdd1}
\sum_{\ell \sim L} |A(a, \ell , f)|^2
&= \sum_{\ell \sim L} \left|A(a, \ell, f p^{-1}) A(1, 1, p) - \sum_{\substack{c_1 | a\\ c_2|\ell \\ c_1c_2=p}} A\left(\frac{ac_2}{c_1}, \frac{\ell}{c_2}, \frac{f}{c_1c_2} \right)\right|^2 \notag \\
&\le 48 \sum_{\ell\sim L} \left|A(a, \ell, f p^{-1})\right|^2  p^{(1-2/17)} + 3\sum_{\ell \sim L} \sum_{\substack{c_1 | a\\ c_2|\ell \\ c_1c_2=p}} \left| A\left(\frac{ac_2}{c_1}, \frac{\ell}{c_2}, \frac{f}{c_1c_2} \right)\right|^2,
\end{align}where we have applied Cauchy-Schwarz while noting that the original sum inside the absolute values is of length $3$ and have used the bound of Luo, Rudnick and Sarnak of $|A(1, 1, p)| \le 4 p^{1/2 - 1/17}$ (see Theorem 12.5.1 of Goldfeld's book \cite{Goldfeld}).  Applying the induction hypothesis, we see that the quantity on the right of \eqref{eqn:lsumbdd1} is

\begin{align}\label{eqn:lsumbdd2}
&48 \sum_{\ell\sim L} \left|A(a, \ell, f p^{-1})\right|^2  p^{(1-2/17)} + 3\sum_{\substack{c_1 | a\\ c_1c_2=p}} \sum_{\ell \sim L/c_2} \left| A\left(\frac{ac_2}{c_1}, \ell, \frac{f}{p} \right) \right|^2 \notag \\
&\ll 48p^{(-2/17)} afL +  3\sum_{\substack{c_1 | a\\ c_1c_2=p}} \frac{afL}{c_1 p} \notag \\
&= 3\frac{afL}{p^{2/17}} \left(16+ \frac{1}{p^{15/17}}\left(1+\frac{1}{p}\right)\right).
\end{align}

In applying our induction hypothesis, we have noted that $\frac{af}{p} \floor{L} < af \floor{L}$ and that $\frac{ac_2f}{c_1p} \floor{\frac{L}{c_2}} < af\floor{L} \frac{c_2}{c_1 p} \le  af\floor{L}$ for $L\ge 1$.  For all but finitely many primes $p$, 
\begin{equation}\label{eqn:lsumbdd3}
\frac{3}{p^{2/17}} \left(16+ \frac{1}{p^{15/17}}\left(1+\frac{1}{p}\right)\right) \le 1, 
\end{equation}
and we have proven the Lemma with no increase in the implied constant.
For those finite number of exceptions to \eqref{eqn:lsumbdd3}, the quantity in \eqref{eqn:lsumbdd2} is still
$$\ll afL,
$$which suffices for the Lemma.  Note that in these finite number of cases, the implied constant has increased - this is acceptable since $f$ can have only a bounded number of prime factors not satisfying \eqref{eqn:lsumbdd3}.

We now proceed to prove a similar result in Lemma \ref{lem:ramanujanonaverage} which will be useful later.  To do so, we first prove another Mobius inversion type result.
\begin{lem}\label{lem:faiprimepower}
	For any prime $p$ and non-negative integers $k, \ell, n$,
\begin{align*}
	A(p^k, 1, 1) A(1, p^\ell, p^n) &= A(p^k, p^\ell, p^n) + A(p^{k-1}, 1, 1) A(1, p^\ell, p^{n-1}) \\
	&+ A(p^{k-1}, 1, 1) A(1, p^{\ell-1}, p^{n+1}) - A(p^{k-2}, 1, 1)A(1, p^{\ell-1}, p^n),
\end{align*}
where we shall follow the convention that $A(a, b, c) = 0$ whenever one of $a, b$ or $c$ is not an integer.
\end{lem}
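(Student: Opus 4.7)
The plan is to prove the identity by expanding both sides via the prime-power restriction of the Hecke relation \eqref{heckerel} and matching terms by a short inclusion-exclusion. In effect, the lemma inverts the Hecke multiplication of $A(p^k, 1, 1)$ against $A(1, p^\ell, p^n)$ to solve for the ``diagonal'' coefficient $A(p^k, p^\ell, p^n)$.

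First, I would apply \eqref{heckerel} to the left-hand side with $(k, \ell, d) \to (p^k, p^\ell, p^n)$ and write the divisors as $c_1 = p^a$, $c_2 = p^b$. This produces the identity
\begin{equation*}
A(p^k, 1, 1)\, A(1, p^\ell, p^n) = \sum_{\substack{0 \le a \le \ell,\ 0 \le b \le n \\ a + b \le k}} A\bigl(p^{k-a-b},\, p^{\ell-a},\, p^{n+a-b}\bigr),
\end{equation*}
which isolates $A(p^k, p^\ell, p^n)$ as the $(a, b) = (0, 0)$ term. The lemma will then follow if the contribution of the $(a, b) \ne (0, 0)$ terms matches the sum of the remaining three products on the right-hand side.

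Next, I would expand each of those three products by \eqref{heckerel} and reindex so that every summand takes the uniform shape $A(p^{k-a-b}, p^{\ell-a}, p^{n+a-b})$. Straightforward substitutions $(a, b) \mapsto (a, b-1)$, $(a, b) \mapsto (a-1, b)$, and $(a, b) \mapsto (a-1, b-1)$, respectively, show that the three expansions become sums of the same summand over the index sets
\begin{equation*}
S_1 = \{0 \le a \le \ell,\ 1 \le b \le n,\ a+b \le k\}, \quad S_2 = \{1 \le a \le \ell,\ 0 \le b \le n+1,\ a+b \le k\},
\end{equation*}
\begin{equation*}
S_3 = \{1 \le a \le \ell,\ 1 \le b \le n+1,\ a+b \le k\}.
\end{equation*}

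The remaining step is the combinatorial verification that $\mathbf{1}_{S_1} + \mathbf{1}_{S_2} - \mathbf{1}_{S_3}$ is the characteristic function of the target index set $\{0 \le a \le \ell,\ 0 \le b \le n,\ a+b \le k,\ (a, b) \ne (0, 0)\}$. The subtraction simultaneously accomplishes two cancellations: it removes the double-counting on $S_1 \cap S_2 = \{a \ge 1,\ 1 \le b \le n\}$, and it kills the spurious terms $\{a \ge 1,\ b = n+1\}$ arising because the second expansion involves the divisor $p^{n+1}$ rather than $p^n$. I expect this bookkeeping to be the main (albeit mild) obstacle; the extra $b = n+1$ terms make the shape of the identity somewhat nonobvious a priori, and it is precisely these two defects that the third product is engineered to cancel with a single minus sign. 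Edge cases where $k$, $\ell$, or $n$ is small are automatically handled by the stated convention that $A(\cdot, \cdot, \cdot)$ vanishes when an argument is non-integral.
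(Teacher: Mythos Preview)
Your proposal is correct and follows essentially the same approach as the paper: both arguments expand every product via the Hecke relation \eqref{heckerel} at the prime $p$ and match terms. The only difference is organizational. The paper splits the expansion of $A(p^k,1,1)A(1,p^\ell,p^n)$ into the $c_2=1$ and $c_2\ge p$ parts, recognizes the latter directly as $A(p^{k-1},1,1)A(1,p^\ell,p^{n-1})$ by a second use of the Hecke relation, and then handles the remaining one-parameter sum $\sum_{a\mid(p^{k-1},p^{\ell-1})} A(p^{k-1}/a,\,p^{\ell-1}/a,\,p^{n+1}a)$ by applying the same identity with shifted exponents $(k,\ell,n)\to(k-1,\ell-1,n+1)$; you instead fully expand all four products into double sums and verify the resulting inclusion--exclusion identity $\mathbf{1}_{S_1}+\mathbf{1}_{S_2}-\mathbf{1}_{S_3}$ on the index sets. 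Your case check is accurate (in particular the cancellation of the spurious $b=n+1$ terms between $S_2$ and $S_3$), so the argument goes through. The paper's version is a touch more structural, while yours is more explicitly combinatorial, but they are the same proof underneath.
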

\begin{proof}
	We first show that
	\begin{equation}\label{eqn:multlem1}
	A(p^k, 1, 1) A(1, p^\ell, p^n) - A(p^k, p^\ell, p^n) - A(p^{k-1}, 1, 1) A(1, p^\ell, p^{n-1}) = \sum_{a|(p^{k-1}, p^{l-1})} A\left( \frac{p^{k-1}}{a}, \frac{p^{\ell-1}}{a}, p^{n+1}a\right).
	\end{equation}
	Indeed, by Hecke multiplicativity (Theorem 9.3.11 of \cite{Goldfeld})
	\begin{align}\label{eqn:multlem2}
	A(p^k, 1, 1) A(1, p^\ell, p^n) &= \sumtwo_{\substack{a, b,  \, ab|p^k\\a|p^\ell, \ b|p^n}} A\left(\frac{p^k}{ab}, \frac{p^\ell}{a}, \frac{p^n a}{b}\right)\notag \\
	&= \sum_{\substack{a|(p^k, p^\ell)}} A\left(\frac{p^k}{a}, \frac{p^\ell}{a}, p^n a\right) + \sumtwo_{\substack{a, b, \ ab|p^{k-1}\\a|p^\ell, \  b|p^{n-1}}} A\left(\frac{p^{k-1}}{ab}, \frac{p^\ell}{a}, \frac{p^{n-1} a}{b}\right) \notag \\
	&= \sum_{\substack{a|(p^k, p^\ell)}} A\left(\frac{p^k}{a}, \frac{p^\ell}{a}, p^n a\right) + A(p^{k-1}, 1, 1) A(1, p^{\ell}, p^{n-1}),
	\end{align}where the second line is simply splitting the previous sum into the two cases $b=1$ and $b\ge p$, and the last line is an application of Hecke multiplicativity.  Our claim in \eqref{eqn:multlem1} follows from this	and
	$$\sum_{\substack{a| (p^k, p^\ell)}} A\left(\frac{p^k}{a}, \frac{p^\ell}{a}, p^n a\right) = A(p^k, p^\ell, p^n) + \sum_{\substack{a| (p^{k-1}, p^{\ell-1})}} A\left(\frac{p^{k-1}}{a}, \frac{p^{\ell-1}}{a}, p^{n+1} a\right).
	$$
	
	On the other hand, by \eqref{eqn:multlem2} with $(k, \ell, n)$ replaced by $(k-1, \ell-1, n+1)$,
	\begin{equation}
	A(p^{k-1}, 1, 1) A(1, p^{\ell-1}, p^{n+1}) - A(p^{k-2}, 1, 1)A(1, p^{\ell-1}, p^n) = \sum_{a|(p^{k-1}, p^{l-1})} A\left(\frac{p^{k-1}}{a}, \frac{p^{\ell-1}}{a}, p^{n+1} a\right),
	\end{equation}
	and this proves the Lemma when combined with \eqref{eqn:multlem1}.
\end{proof}

This Lemma about prime powers leads directly to the following more general Lemma \ref{lem:faigeneral}, which follows by multiplicativity.  Of course, Lemma \ref{lem:faigeneral} can be proven by Mobius inversion as well at the cost of seeming less motivated.
\begin{lem}\label{lem:faigeneral}
	\begin{equation}
	A(k, \ell, n) = \sumthree_{\substack{d, e, f \\ d|(k, \ell), \ e|(d, k/d), \\ f|(k, n)}} \mu(df)\mu(e) A\left(\frac{k}{dfe}, 1, 1\right) A\left(1, \frac{\ell }{d}, \frac{dn}{ef}\right).
	\end{equation}
\end{lem}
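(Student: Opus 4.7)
The plan is to reduce the general identity to the prime-power identity in Lemma~\ref{lem:faiprimepower} via Hecke multiplicativity of the Fourier coefficients, as the authors hint. First I would verify the statement in the prime-power case $k = p^{k_0}$, $\ell = p^{\ell_0}$, $n = p^{n_0}$. Writing $d = p^{d_0}$, $e = p^{e_0}$, $f = p^{f_0}$, the nonvanishing of $\mu(df)\mu(e)$ forces $d_0 + f_0 \in \{0,1\}$ and $e_0 \in \{0,1\}$; combined with the constraints $d \mid (k,\ell)$, $e \mid (d, k/d)$, $f \mid (k,n)$, the only admissible triples are $(0,0,0)$, $(0,0,1)$, $(1,0,0)$, and $(1,1,0)$, with signs $+1, -1, -1, +1$. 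Substituting these into the right-hand side yields exactly
\[
A(p^{k_0},1,1)A(1,p^{\ell_0},p^{n_0}) - A(p^{k_0-1},1,1)A(1,p^{\ell_0},p^{n_0-1}) - A(p^{k_0-1},1,1)A(1,p^{\ell_0-1},p^{n_0+1}) + A(p^{k_0-2},1,1)A(1,p^{\ell_0-1},p^{n_0}),
\]
which is the rearrangement of Lemma~\ref{lem:faiprimepower} for $A(p^{k_0}, p^{\ell_0}, p^{n_0})$.

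Next I would extend to general $k, \ell, n$ by multiplicativity. Writing $k = \prod_p p^{k_p}$, $\ell = \prod_p p^{\ell_p}$, $n = \prod_p p^{n_p}$, and similarly $d, e, f$ in terms of local exponents, all divisibility conditions decouple prime by prime. The Möbius factor splits as $\mu(df)\mu(e) = \prod_p \mu(p^{d_p + f_p}) \mu(p^{e_p})$, and each Fourier coefficient on the right-hand side factors by Hecke multiplicativity (Theorem 9.3.11 of \cite{Goldfeld}); here one must check that the arguments are genuine integers, i.e.\ that $d_p + n_p - e_p - f_p \geq 0$, which follows from $e_p \leq d_p$ and $f_p \leq n_p$ implied by the constraints. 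Thus the right-hand side becomes a product over primes of exactly the local sums handled in the first step, giving $\prod_p A(p^{k_p}, p^{\ell_p}, p^{n_p}) = A(k, \ell, n)$ by multiplicativity again.

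The main obstacle is really just bookkeeping: one must carefully verify that the nested divisibility relations $d \mid (k,\ell)$, $e \mid (d, k/d)$, $f \mid (k,n)$ factor prime by prime in a way compatible with the Möbius support and with the integrality of the indices of the Fourier coefficients appearing on the right. The substantive inputs—Lemma~\ref{lem:faiprimepower} and the Hecke multiplicativity of $A(\cdot,\cdot,\cdot)$—are already at our disposal, so beyond careful symbolic manipulation nothing else is required. One could alternatively derive the lemma directly by Möbius inverting a two-variable Hecke relation analogous to \eqref{heckerel}, but the multiplicativity route keeps closer to the structure already set up in Lemma~\ref{lem:faiprimepower}.
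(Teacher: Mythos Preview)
Your proposal is correct and follows exactly the route the paper indicates: verify the prime-power case reduces to (a rearrangement of) Lemma~\ref{lem:faiprimepower}, then pass to general $k,\ell,n$ by Hecke multiplicativity, which is precisely what the paper means by ``follows by multiplicativity.'' Your enumeration of the four admissible triples and the integrality check $d_p + n_p - e_p - f_p \ge 0$ are accurate, and the observation that triples excluded by the divisibility constraints correspond to terms that vanish under the convention $A(p^{-1},\cdot,\cdot)=0$ closes the bookkeeping.
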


Now we are ready to prove the following Lemma.
\begin{lem}\label{lem:ramanujanonaverage}
	For any $M>0$ and positive integers $b,c$,
	\begin{equation*}
		\sum_{m \le M} |A(m, b, c)|^2 \ll (bcM)^{1+\epsilon}.
	\end{equation*}	
\end{lem}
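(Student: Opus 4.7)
The plan is to prove Lemma \ref{lem:ramanujanonaverage} by double induction on $(b,c)$, with outer induction on $b$ and inner induction on $c$ for each fixed $b$; the sole base case is $b=c=1$, handled by Rankin--Selberg for $L(\phi,s)$: $\sum_{m \le M}|A(m,1,1)|^2 \ll M^{1+\epsilon}$. The main tool is the Hecke relation (\ref{heckerel}) specialised to $k=m$, $\ell=b$, $d=c$:
\begin{equation*}
A(m,1,1)\, A(1,b,c) = \sum_{\substack{c_1 c_2 \mid m \\ c_1 \mid b,\ c_2 \mid c}} A\Bigl(\tfrac{m}{c_1 c_2},\tfrac{b}{c_1},\tfrac{c c_1}{c_2}\Bigr).
\end{equation*}
Isolating the $(c_1, c_2) = (1, 1)$ summand writes $A(m,b,c)$ as a ``main term'' $A(m,1,1)A(1,b,c)$ minus an ``error sum''; in each error term, either $c_1 > 1$ (giving $b/c_1 < b$, triggering the outer inductive hypothesis) or $c_1 = 1$ with $c_2 > 1$ (giving $b/c_1 = b$ but $cc_1/c_2 = c/c_2 < c$, triggering the inner inductive hypothesis), so the induction is well-founded.

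For the inductive step, I would apply Cauchy--Schwarz (there are only $\ll (bc)^\epsilon$ error terms per $m$) and sum over $m \le M$. The main contribution
\begin{equation*}
|A(1,b,c)|^2 \sum_{m \le M}|A(m,1,1)|^2 \ll (bc)^{1+\epsilon} M^{1+\epsilon}
\end{equation*}
uses Rankin--Selberg for the $m$-sum and, for the $|A(1,b,c)|^2$ factor, Lemma \ref{lem:sum} with positivity (via $|A(1,b,c)|^2 = |A(c,b,1)|^2 \le \sum_{k \le 2c,\, \ell \le 2b}|A(k,\ell,1)|^2 \ll (bc)^{1+\epsilon}$ after dyadic decomposition). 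By the inductive hypothesis, each error term contributes $\ll (Mbc/(c_1 c_2^2))^{1+\epsilon}$, and summing the convergent series $\sum_{(c_1,c_2)\ne(1,1)} (c_1 c_2^2)^{-1-\epsilon} < \infty$ gives a total error also of size $\ll (Mbc)^{1+\epsilon}$.

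The main obstacle I anticipate is the control of $|A(1,b,c)|^2$: a naive pointwise application of Rankin--Selberg yields only $|A(1,b,c)|^2 \ll b^2 c^3$ (and, applied directly to the target, $\sum_m|A(m,b,c)|^2 \ll b^2 c^3 M$), far weaker than the target $Mbc$, so the sharper $(bc)^{1+\epsilon}$ bound must be imported from Lemma \ref{lem:sum} (which itself relies on the Hecke-relations bookkeeping and the Luo--Rudnick--Sarnak and Kim inputs developed earlier in the paper). A secondary subtle point is that the transformation $(b,c) \mapsto (b/c_1, cc_1/c_2)$ can \emph{increase} $c$ when $c_1 > c_2$, so no single monotone quantity like $bc$ or $b+c$ can serve as the induction parameter, forcing the outer-$b$/inner-$c$ lexicographic structure.
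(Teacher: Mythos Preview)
Your ingredients are right (the Hecke relation \eqref{heckerel}, the pointwise bound $|A(1,b,c)|^2\ll(bc)^{1+\epsilon}$ from Lemma \ref{lem:sum}, and Rankin--Selberg for the $m$-sum), but the induction as written does not close: the implied constant grows without bound through the recursion. At each step you lose a Cauchy--Schwarz factor of size $\tau(b)\tau(c)$, and the ``convergent series'' $\sum_{(c_1,c_2)\neq(1,1)}(c_1c_2^2)^{-1-\epsilon}$ has value $\asymp 1/\epsilon$, certainly not $<1$. So the bound you get for $(b,c)$ is at least $\tau(b)\tau(c)$ times the worst inductive constant. To see the failure concretely, take $b=1$, $c=2^k$: the recursion becomes $C(2^k)\le (k+1)\bigl[C_0+\sum_{j=1}^k C(2^{k-j})4^{-j}\bigr]$, whose solution grows like $\prod_{j\le k}(j+1)/4\asymp (k/4)^k$, which is \emph{not} $\ll c^\epsilon=2^{k\epsilon}$. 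The lexicographic well-foundedness you carefully arranged guarantees the recursion terminates, but it does not keep the constant uniform in $(b,c)$.

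The paper's proof circumvents this by first unrolling your recursion completely into the explicit M\"obius-type identity of Lemma \ref{lem:faigeneral},
\[
A(m,b,c)=\sumthree_{\substack{d|(m,b),\ e|(d,m/d)\\ f|(m,c)}}\mu(df)\mu(e)\,A\!\left(\tfrac{m}{dfe},1,1\right)A\!\left(1,\tfrac{b}{d},\tfrac{dc}{ef}\right),
\]
and only \emph{then} applying Cauchy--Schwarz a single time. This costs one divisor-type factor $(bcM)^\epsilon$ rather than an iterated one, after which Lemma \ref{lem:sum} and Rankin--Selberg are invoked exactly as you propose. The fix for your argument is therefore to iterate your Hecke-relation identity to the bottom before estimating; what you recover is precisely Lemma \ref{lem:faigeneral}, and the rest of your sketch then goes through.
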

\begin{proof}
	We apply Lemma \ref{lem:faigeneral} and Cauchy-Schwarz to see that
	\begin{align}\label{eqn:ramlem1}
	\sum_{m \le M} |A(m, b, c)|^2 
	\le (bcM)^\epsilon \sumthree_{\substack{d, e, f \\ d|b, \ e|d,  \ f|c}} \left|A\left(1, \frac{b }{d}, \frac{dc}{ef}\right)\right|^2 \sum_{\substack{m\le M \\ d|m, e|m/d \\ f|m}} \mu^{2}(df) \left|A\left(\frac{m}{def}, 1, 1\right)\right|^2. 
	\end{align}
	Since $\mu^2(df) = 0$ if $(d, f) > 1$, we can replace $\mu^2(df)$ by the condition $(d,f) = 1.$  Note the conditions $d|m, e|m/d, f|m$ and $e|d$ yields that $def | m.$  When $A$ is the Fourier coefficient of $\phi$, then $A(1, b, c) = B(c, b, 1)$ where $B(c, b, 1)$ is the Fourier coefficient of the dual form of $\phi$, so Lemma \ref{lem:sum} implies that 
	\begin{equation}\label{eqn:ramlem2}
	 \left|A\left(1, \frac{b }{d}, \frac{dc}{ef}\right)\right|^2  \ll \left( \frac{bc}{ef}\right)^{1+\epsilon},
	\end{equation}simply by dropping all but one term in the sum, while standard contour integration of the Rankin-Selberg $L$-function gives
	\begin{align}\label{eqn:ramlem3}
	\sum_{\substack{m\le M \\ def| m}} \left|A\left(\frac{m}{dfe}, 1, 1\right)\right|^2
	&\le \sum_{m\le M/dfe} \left|A\left(m, 1, 1\right)\right|^2 \ll \frac{M}{dfe}.
	\end{align}
	Using \eqref{eqn:ramlem2} and \eqref{eqn:ramlem3} in \eqref{eqn:ramlem1} gives that
	\begin{equation*}
	\sum_{m \le M} |A(m, b, c)|^2  \ll (bcM)^\epsilon \sumthree_{\substack{d, e, f \\ d|b,\ e|d, \ f|c}} \frac{bcM }{de^2f^2} \ll (bcM)^{1+\epsilon},
	\end{equation*}as desired.
\end{proof}

\section{Setting up for Fourier analysis}

When $N \ll T$, Proposition \ref{prop:mainprop} follows immediately from an application of the large sieve type bound due to Luo (see Theorem 1 of \cite{Luo}).  For the reader's convenience, we state that bound here.  For any sequence of complex numbers $a_n$, we have that
\begin{equation}
\sum_{t_j \le T} (\cosh \pi t_j)^{-1} \left| \sum_{n\le N} a_n \rho_j(n)n^{it_j} \right|^2 \ll (T^2 + T^{3/2}N^{1/2} + N^{5/4}) (NT)^\epsilon \sum_{n\le N} |a_n|^2.
\end{equation}

Actually, the weaker bound claimed in (7) of Luo \cite{Luo} would suffice, but that bound is cited as Theorem 6 of \cite{DI} by Luo, and Theorem 6 of \cite{DI} required conditions on the coefficients which are not available for us.  We thank one of the anonymous referees for pointing this observation.

For $N \gg T$, we require the following theorem, which is Theorem 7.1 of Young's work \cite{Young}.

\begin{thm}  \label{thm:young7}Let 
	\est{ S(\mathcal A) = \sum_{t_j} w(t_j) |\rho_j(1)|^2 \left| \sum_{n \sim N} a_n \lambda_j(n) n^{it_j}\right|^2}
	For any $1 \leq X \leq T$ and $N \gg T, $ we have
	\est{ S(\mathcal A) = S_1(\mathcal A ; X) + O\pr{T^2 + \frac{NT}{X} + \frac{N^{\frac 32}}{T}} N^\epsilon \|\mathcal A\|^2}
	where $\|\mathcal A\|^2 = \sum_{n \sim N} |a_n|^2$, and
	\est{S_1(\mathcal A; X) \ll T \sum_{r < X} \frac{1}{r^2} \sum_{0 \neq |k| \ll rT^\epsilon} \int_{-T^{\epsilon}}^{T^{\epsilon}} \min \left\{ \frac{1}{|u|}, \frac{r/|k|}{1 + u^2}\right\} \left| \sum_n a_n S(k, n; r) \e{\frac{un}{rT}} \right|^2 \> du.}
	Here $S(k, n; r)$ is the usual Kloosterman sum defined by
	$$ S(k, n; r) = \sumstar_{x \mod r} \e{\frac{kx + n \overline{x}}{r}},$$
	where $\sumstar_{x \mod r}$ represents the summation restricted to coprime residue mod $r$. 
	
\end{thm}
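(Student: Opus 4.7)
The plan is to apply the Kuznetsov trace formula to the opened square. First, expand
\begin{equation*}
S(\mathcal A) = \sum_{m, n \sim N} a_m \overline{a_n} \sum_{t_j} w(t_j) |\rho_j(1)|^2 \lambda_j(m) \lambda_j(n) \pr{\frac{m}{n}}^{it_j}.
\end{equation*}
The inner sum is exactly of the shape handled by Kuznetsov with test function $h_{m,n}(t) = w(t)(m/n)^{it}$ (the factor $|\rho_j(1)|^2/\|u_j\|^2$ is absorbed into the spectral normalization). Since $w$ effectively restricts $|t| \ll T$, $h_{m,n}$ has the analytic continuation and decay properties required for the formula.

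Kuznetsov decomposes the inner sum into four pieces: a diagonal term (supported on $m = n$), a continuous spectrum contribution from Eisenstein series, and two Kloosterman contributions involving $S(m, n; r)$ and $S(m, -n; r)$. The diagonal contributes $\pr{\sum_{m \sim N} |a_m|^2} \cdot \frac{1}{\pi^2} \int h_{m,m}(t)\, t \tanh(\pi t)\, dt \ll T^2 \|\mathcal A\|^2$, since the effective mass of the transformed test function is supported on $|t| \ll T$. The continuous spectrum contribution involves Eisenstein coefficients, which are divisor-like and satisfy Ramanujan; combined with $1/|\zeta(1+2it)| \ll \log T$ this falls within the claimed error.

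The principal work is on the Kloosterman pieces
\begin{equation*}
\sum_{r \geq 1} \frac{1}{r} \sum_{m, n \sim N} a_m \overline{a_n}\, S(\pm m, n; r)\, \Phi_\pm\!\pr{\frac{4\pi \sqrt{mn}}{r}},
\end{equation*}
where $\Phi_\pm$ is the Bessel-type transform of $h_{m,n}$. For $r \geq X$, I would use the Weil bound $|S(m, n; r)| \ll r^{1/2+\epsilon}(m, n, r)^{1/2}$ and a trivial estimate on $\Phi_\pm$ together with Cauchy--Schwarz, producing the tail $O(NT/X \cdot N^\epsilon \|\mathcal A\|^2)$. For $r < X$, the transform $\Phi_\pm$ is analyzed by stationary phase in $t$: writing $(m/n)^{it} = \exp(it \log(1 + k/n))$ with $k = m - n$ and using an integral representation of $J_{2it}$, the combined phase has a stationary point depending on $k/(rN)$ and on a residual variable $u$ dual to the smooth localization of $w$. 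Extracting this stationary contribution yields the structure of $S_1$, with the weight $\min\{1/|u|,\, (r/|k|)/(1+u^2)\}$ encoding the transition between the stationary phase regime and the Bessel transition zone, and the range $|k| \ll rT^\epsilon$ corresponding to where the stationary point lies in the effective support of $w$.

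The main obstacle is carrying out this stationary phase analysis uniformly in all parameters $m, n, k, r, u$, with repeated integration by parts in the non-stationary regime to gain arbitrary power savings. The $N^{3/2}/T$ error is expected to come from the boundary behaviour of the stationary phase together with the small-$|k|$ regime, where the phase becomes essentially flat and one is forced to bound the resulting integral trivially; the delicate point is confirming that this unavoidable loss fits within the stated budget.
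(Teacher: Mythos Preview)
The paper does not prove this theorem at all: it is quoted as Theorem~7.1 of Young~\cite{Young}, with the Remark immediately following it noting that the precise shape of the bound on $S_1(\mathcal A;X)$ is taken from equation~(8.8) of~\cite{Young} together with the truncation in (8.9) and (8.12) there, rather than from the final statement of Young's Theorem~7.1. So the ``paper's proof'' is a citation.

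Your outline is a reasonable reconstruction of how Young actually establishes the result --- Kuznetsov on the opened square, separation into diagonal, continuous, and Kloosterman pieces, and then an asymptotic analysis of the Bessel transform $\Phi_\pm$ for $r<X$ --- so in that sense your approach is correct in spirit and aligned with the original source. A few points where your sketch is loose compared with what Young does: the continuous spectrum is not merely ``divisor-like plus $|\zeta(1+2it)|^{-1}\ll\log T$'' but is handled by positivity (adding it to the cuspidal side rather than bounding it separately); the tail $r\ge X$ is treated via the explicit decay of $\Phi_\pm$ in Young's Lemma~8.1 rather than Weil plus a trivial bound on $\Phi_\pm$; and the $\min\{1/|u|,\,(r/|k|)/(1+u^2)\}$ weight arises not from a single stationary-phase/transition dichotomy but from keeping the full integral representation of $\Phi_+$ and bounding it two different ways before restricting $|u|\le T^\epsilon$. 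None of these gaps is fatal to the outline, but if you were to actually carry it out you would want to follow Young's Section~8 rather than improvise the stationary-phase step, since the uniformity in $m,n,r,k,u$ is exactly where the work lies.
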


\begin{rem}
	The upper bound for $ S_1(\mathcal A; X)$ stated in Theorem 7.1 of Young's work \cite{Young} is actually
	\est{S_1(\mathcal A; X) \ll T \sum_{r < X} \frac{1}{r} \sum_{0 \neq |k| \ll rT^\epsilon} \frac{1}{|k|} \int_{-T^{-\epsilon}}^{T^{-\epsilon}}  \left| \sum_n a_n S(k, n; r) \e{\frac{un}{rT}} \right|^2 \> du. }
	However this is not enough to obtain the bound $T^{2 + \epsilon}$ in Proposition \ref{prop:mainprop} especially for the case when $k \sim K$ is small and $r \sim R$ is big.  Specifically in that case, Poisson summation over $k$ leads to a long dual sum while the factor of $\frac{1}{|k|}$ is too large.  We instead keep the more flexible bound from Equation (8.8) of \cite{Young} and truncate the integral to $T^{\epsilon}$ in the same way as (8.9) and (8.12) of \cite{Young}.  
\end{rem}

After we apply Theorem \ref{thm:young7} to $\mathcal H_{\ell}$, 
$$ S(\A) - S_1(\A, X) \ll T^{2 + \epsilon} \left( \sum_{n \sim N} \frac{|A(1, \ell, n)|^2}{n}\right) $$
upon choosing $X = \min\{T, \frac NT\}$ and using $N  = \frac{P}{\ell^2}$ and $P \ll  T^{2 + \epsilon}.$ The above bound suffices to upon using Lemma \ref{lem:sum}.

Thus, we find that in order to bound $H_\ell$, we need to bound 
\est{&T \sum_{r < X} \frac{1}{r^2} \sum_{0 \neq |k| \ll rN^{\epsilon}} \int_{-T^{\epsilon}}^{T^{\epsilon}} \min \left\{ \frac{1}{|u|}, \frac{r/|k|}{1 + u^2}\right\}  \left| \frac{1}{\sqrt N}\sum_n A(1, \ell, n)S(k, n; r) w_3\left(\frac nN\right) e\pr{\frac{un}{rT}} \right|^2 \> du, }
where $w_3(x) = \frac{w_2(x)}{\sqrt x}$, $N  =  \frac{P}{\ell^2} \ll \frac{T^{2+\epsilon}}{\ell^2}$ and $ 1 \leq X = \min \{T, \frac{N}{T}\}$. 

Let $R \leq X$ and $K \ll RN^\epsilon \ll RT^{ \epsilon}$. It is sufficient to consider the dyadic sum
\es{ \label{def:IRK} \mathcal I(R, K; \ell) &= T \sum_{r \sim R} \frac{1}{r^2} \sum_{|k| \sim K} \int_{-T^\epsilon}^{T^\epsilon} g(u)  \left| \frac{1}{\sqrt N}\sum_n A(1, \ell, n)S(k, n; r) w_3\left(\frac nN\right) e\pr{\frac{un}{rT}} \right|^2 \> du.
}
where 
\es{\label{def:gu} g(u) = g(u, r, k) = \min \left\{\frac{1}{|u|}, \frac{R/K}{1+u^2}\right\}.}

It now suffices to prove the following Lemma.
\begin{lem} \label{lem:mainlem2} For any fixed $\ell \ll T^{1 + \epsilon}$, let $T \ll N \ll \frac{T^{2 + \epsilon }}{\ell^2}$, $R \leq X$, $K \ll RT^{\epsilon}$, and $X = \min \left\{T, \frac NT \right\}.$ Then 
	\est{\mathcal I(R, K; \ell) \ll T^{2 + \epsilon}}
	where the implied constant depends on $\epsilon.$ This bound is uniform in $\ell$.
	 
\end{lem}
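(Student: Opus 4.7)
\textbf{Proof proposal for Lemma \ref{lem:mainlem2}.} The plan is to open the square inside $\mathcal I(R,K;\ell)$ and exchange orders of summation so that the inner sum can be analysed by harmonic techniques (Poisson and GL$(4)$ Voronoi). Writing
\est{
\mathcal I(R,K;\ell) = \frac{T}{N} \sum_{r\sim R} \frac{1}{r^2} \sumtwo_{n_1,n_2} A(1,\ell,n_1)\overline{A(1,\ell,n_2)} w_3\!\left(\tfrac{n_1}{N}\right) w_3\!\left(\tfrac{n_2}{N}\right) \sum_{|k|\sim K} S(k,n_1;r)\overline{S(k,n_2;r)}\, \widehat g(n_1-n_2,r),
}
where $\widehat g(m,r)=\int g(u)\e{um/(rT)}\,du$, one is naturally led to evaluate the $k$-sum first.

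The first step is to apply Poisson summation to the $k$-variable (with a smooth bump of size $K$). Opening $S(k,n_i;r)$ into its definition and exchanging the $k$ and residue sums, the $k$-Poisson forces a congruence of the form $x-y\equiv h\pmod r$ with $|h|\ll R/K$, transforming the product $S(k,n_1;r)\overline{S(k,n_2;r)}$ into a manageable Ramanujan-type sum in the combined modulus together with a dual variable of length $R/K$. The $h=0$ ``diagonal'' case forces $n_1\equiv n_2\pmod r$ and can be bounded using Weil's estimate together with the average-Ramanujan bound of Lemma \ref{lem:ramanujanonaverage}; this contribution fits comfortably within $T^{2+\epsilon}$.

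For the genuine off-diagonal, which is the heart of the argument, the second step is to apply GL$(4)$ Voronoi summation to each of the sums over $n_1$ and $n_2$ separately, using the shift $un_i/(rT)+\text{(residue)}/r$ coming from $\widehat g$ and the residual additive character. The Voronoi dual for GL$(4)$ expresses each inner sum as a three-dimensional dual sum over $(m_1,m_2,m_3)$, with coefficients $A(m_1,m_2,m_3)$, a hyper-Kloosterman sum in a modulus dividing $r^4$, and an archimedean integral transform whose stationary-phase analysis localises the dual variable near $m_1 m_2^2 m_3^3 \asymp r^4/N$ up to logarithmic factors. Combining the two Voronoi-duals, collapsing the residue sums against the $h$-constraint of Step 1, and bounding the hyper-Kloosterman sums by Deligne--Weil, one is reduced to estimating sums of $|A(m_1,m_2,m_3)|^2$ of controlled total length, where Lemmas \ref{lem:sum} and \ref{lem:ramanujanonaverage} deliver $T^{2+\epsilon}$ after summing over the parameters $r\sim R$, $k\sim K$ uniformly in $\ell$.

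\textbf{Main obstacle.} The chief difficulty, and the reason this is the innovation of the paper, lies in Step 2: the GL$(4)$ Voronoi produces a genuinely \emph{three-dimensional} dual sum (as opposed to the two-dimensional dual arising from GL$(3)$ in Young's treatment), so both the organisation of the archimedean transform and the separation of the dual variables must be handled carefully to avoid logarithmic or polynomial losses. In particular, the borderline regime $R\asymp X$ with $K\asymp R$ and $N$ near its upper limit $T^{2+\epsilon}/\ell^2$ is tight: here the stationary-phase length, the hyper-Kloosterman modulus, and the average-Ramanujan bounds of Section \ref{sec:lemsumpf} must all cooperate to yield exactly $T^{2+\epsilon}$, and the use of Kim's functoriality of the exterior square (via Lemma \ref{lem:adjsumbdd}) is essential to prevent a spurious factor of $\ell^\epsilon$ or worse on the coefficient side. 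Uniformity in $\ell$ is maintained by exploiting the fact that the $\ell$-dependence enters the Voronoi dual only through the hyper-Kloosterman modulus, where Weil's bound saves the relevant power.
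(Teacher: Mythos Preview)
Your plan diverges from the paper in two essential respects, and in each case the divergence hides a genuine gap.

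\textbf{Treatment of the hyper-Kloosterman sums.} After Poisson in $k$ you propose to bound the resulting hyper-Kloosterman sums pointwise by Deligne--Weil. This is too crude: a single hyper-Kloosterman sum to modulus $r$ is of size $r^{3/2+\epsilon}$, but you have $\asymp r$ values of the outer residue variable, so you lose a factor of $r$ compared with what is needed. The paper does something structurally different. After Poisson in $k$ it applies Cauchy--Schwarz and positivity to reduce to a single $n$-sum squared (so Voronoi is applied once, not to $n_1$ and $n_2$ separately), then \emph{opens} the hyper-Kloosterman sum, completes the outer residue sum $a_1$ to all residues, and uses orthogonality twice to collapse the inner residue sums. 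What survives is an expression of the shape $\sum_{r}\sumstar_{x\bmod r}\big|\sum_m a_m\,e(mx/r)\big|^2$, to which the classical additive large sieve applies. No Weil bound is used for the hyper-Kloosterman sums at any point.

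\textbf{The missing phase/close-coupling argument.} You correctly note that stationary phase localises the Voronoi dual near $m\asymp r^4/N$, but you stop there. The genuinely hard case is when $m$ is \emph{large} ($m\gg R^4\ell^2/(Nd_1^3d_2^2)\cdot T^{\epsilon_1}$), where the dual sum is as long as the original $n$-sum and the large sieve alone is insufficient. The paper's key innovation is the observation that the $u$-integral forces $|n_1-n_2|\ll RT^{1+\epsilon}/U$ (morally, a condition of the type $(n_1/n_2)^{it}$), and that after Voronoi this descends to a constraint $|m_1-m_2|\ll \mathcal R M$ with $\mathcal R\ll T^{-\epsilon_1/4}$ on the dual side. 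One then dissects the $m$-range into short intervals, separates the $r_1$--$m$ dependence by a Taylor expansion, and applies the large sieve on each short interval. Without this step the big-$m$ contribution cannot be closed. Incidentally, the regime you single out as borderline ($R\asymp X$, $N$ near $T^{2}/\ell^2$) is actually the \emph{easier} one: the paper remarks explicitly that $N=T^2$, $R=T$ ``is not the most difficult, and indeed follows by an application of the large sieve.'' The tight case is rather $RT\ll NU$, where the close-coupling argument is indispensable.
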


The proof of Lemma \ref{lem:mainlem2} will be provided Section \ref{sec:IRKinitial} - \ref{sec:big}.  To orient the reader, we provide an outline of this important Lemma.

\subsection{Outline of the proof of Lemma \ref{lem:mainlem2}} \label{subsec:sketch}
For this outline, we will ignore technical issues and focus on structural features of the proof.  First, we consider the sum
\begin{equation}\label{eqn:IRK1}
\mathcal I(R, K; \ell, U ) := \sum_{r \sim R} \frac{1}{U r^2} \sum_{|k| \sim K} \int_{U}^{2U} \left| \frac{1}{\sqrt N}\sum_n A(1, \ell, n)S(k, n; r) w_3\left(\frac nN\right) e\pr{\frac{un}{rT}} \right|^2 \> du,
\end{equation}
where we have applied a dyadic subdivision to the integral over $u$ and replaced $g(u)$ by $\frac{1}{U}$.  For illustrative purposes, we will assume that $T^{-100} \le U \le T^\epsilon$.    Applying Poisson over $k$ and Voronoi for the sum over $n$ along with an application of Cauchy-Schwarz results in sums roughly of the form 
\begin{equation}\label{eqn:IRK2}
\frac{T^{1 + \epsilon}R}{U N}     \int_{U}^{2U} \sum_{r \sim R} \sumstar_{\substack{a_1 \mod r }} \left|  \ \sum_{m > 0} \frac{A(m, 1, 1)}{m} \mathcal {KL}( \overline{a_1},\, m \, ; r, (1, 1) , (1, 1)) F\pr{\frac{m }{r^4}}\right|^2 \> du,
\end{equation}
where $ \mathcal {KL}(\overline{a_1},\, m \, ; r, (1, 1) , (1, 1))$ denotes the usual hyper-Kloosterman sum to be defined later and $F$ is some type of integral transform of $w_3\bfrac{n}{N} e\bfrac{un}{rT}$.  In the $GL(3)$ case in Young's work \cite{Young}, Equation \eqref{eqn:IRK2} is already sufficient, since the sum over $m$ will be non-existent.  In the $GL(4)$ case, the dual sum over $m$ can be essentially the same length as the original sum over $n$ which presents significant difficulties.  Here, it is useful to note that the extreme case $N = T^2$, $R = T$ is not the most difficult, and indeed follows by an application of the large sieve.  

To be more precise, by completing the sum over $a_1$ to all $a_1 \bmod r$ and a similar procedure upon opening up the hyper-Kloosterman sum, we get a sum of the form
\begin{equation}\label{eqn:IRKfourier}
\frac{T^{1 + \epsilon}R^3}{UN}     \int_{U}^{2U} \sum_{r \sim R}  \sumstar_{x \mod{r}} \left| \sum_{m > 0} \frac{A(m, 1, 1) }{m} F\pr{\frac{m}{r^4}} e\pr{\frac{m\,x}{r}}
\right|^2 du.
\end{equation}
In the actual proof we then proceed to examine two separate cases: when $m$ is small and when $m$ is large.  However, as a conceptual framework, the reader should think of the following two cases instead: when the phase inside $F$ is insignificant and when the phase plays a significant role.  

The first case includes the case when $m$ is small but also includes the case when all the parameters are large, specifically when $N = T^2, R=T$ and the $m$ is of size $R^4/N = T^2$.  Ignoring all technical complications inside $F$, we will see that this case can be handled by an application of the large sieve.

The second case is more involved.  For motivation, note that in \eqref{eqn:IRK1}, one may write
$$\int \left|\sum_n e\bfrac{nu}{rT}...\right|^2 = \sum_{n_1, n_2}... \int_U^{2U} e\bfrac{u(n_1-n_2)}{rT} du,
$$and the integral in $u$ essentially forces 
\begin{equation}\label{eqn:IRK3}
|n_1-n_2| \ll \frac{RT^{1+\epsilon}}{U}
\end{equation} which is significant when $\frac{RT}{U}$ is smaller than $N$.  (Note that this essentially excludes the extreme case $N = T^2$ and $R = T$.)  We thus assume that
$$RT \ll NU.
$$

The presence of this narrow region type condition on $(n_1, n_2)$ should be no surprise, since our initial sum looked like $\sum_{t_j} ...\bfrac{n_1}{n_2}^{it_j}$ where $1/4+t_j^2$ are Laplace eigenvalues, and we expect this average to force $n_1$ and $n_2$ to be close.  (This expectation is obscured after applying Kutznetsov and other tools, but \eqref{eqn:IRK3} is a direct descendant.)  We should expect to understand this natural constraint well if we are to derive cutting edge results.

Morally, the main issue is how the condition \eqref{eqn:IRK3} is expressed in \eqref{eqn:IRKfourier} after Voronoi summation transform the sum over $n$ into dual sums over $m$.  Of course, when $n_1$ and $n_2$ corresponds to variables of integration $y_1$ and $y_2$ inside $F$, this forces $y_1$ and $y_2$ to be close.  Writing the dual sums as
$$\left|\sum_m ...\right|^2 = \sum_{m_1, m_2}...,
$$a less obvious conclusion is that this in turn forces $m_1$ and $m_2$ to be somewhat close, which arises from the phases introduced by Voronoi on $GL(4)$.  Actually, this not surprising in hindsight,  In particular, one can morally express the condition $|n_1-n_2| \ll RT^{1+\epsilon}$ via an integration
$$\frac{1}{V} \int_V^{2V} \bfrac{n_1}{n_2}^{it} dt
$$where $V \asymp \frac{NU}{RT}$.  The combined conductor of $n^{it} e\bfrac{mx}{r}$ is then $\asymp \frac{NU}{T}$ is independent of $R$.  

At this point, one can try to apply the hybrid large sieve, without applying Voronoi in $n_i$.  This will give a bound roughly of the form $R^2 + \frac{T^2}{U}$, which is acceptable only when $U \gg T^{-\epsilon}$ say.  In other words, we lose in the case when $n_1$ and $n_2$ are not forced to be very close.  However, recall that the hybrid conductor mentioned is around size $\frac{NU}{T}$ and is small when $U$ is small, and this explains when we should expect Voronoi to give us savings.

Although this specific formula does not appear to be readily available in the literature, we would still expect some formula like
$$\sum_n n^{it} e\bfrac{xn}{r} g\bfrac{n}{N} = \sum_m m^{-it} \times \textup{ hyperkloosterman sum} \times \textup{integral transform} 
$$to hold.  Thus, we would expect $\bfrac{n_1}{n_2}^{it}$ to transform to $\bfrac{m_2}{m_1}^{it}$ and so the integral over $t$ forces the dual variables $m_1$ and $m_2$ to be close also.

In the actual proof, we use the Voronoi summation formula as in Theorem \ref{thm:voronoi}.  For this reason and due to other suppressed details, the actual computation is rather intricate.  After understanding the dual sum over $m_i$ as described above, we can use the hybrid large sieve to handle both the condition that $m_1$ and $m_2$ are close, and the resultant linear phases.  In our exposition, we choose instead to use a dissection to separate the interdependence of the $m_i$.  This is purely a matter of preference; for us, this dissection is more direct and easier to visualize, but slightly lengthier.  This analysis takes up the bulk of the remaining work in \S \ref{sec:big}.

\section{Voronoi summation}
In this section, we collect some technical lemmas which arise when using Voronoi summation. We will be using the Voronoi formula over $GL(4)$ from Miller and Schmid's work \cite{MS} to deal with  the sum over $n$ in $\mathcal I(R, K; \ell )$.  First, we introduce necessary notations, which are taken from \cite{MS} and \cite{MZ}.

Let $a, n \in \mathbb Z$ and $r \in \mathbb N$ and define 
$$ \mathbf{ q }= (q_1, q_2)  \ \ \ \ \ \textrm{and} \ \ \ \ \ \mathbf{d} = (d_1, d_2)$$
to be vectors of positive integers, where $d_1 | q_1r$ and $d_2 | \frac{q_1q_2r}{d_1}$. The hyper-Kloosterman sum is defined to be
\est{\label{def:hyperkl} \mathcal {KL}(a, n, r; \mathbf{q}, \mathbf{d}) = \sumstar_{x_1 \mod{\frac{q_1r}{d_1}}} \, \sumstar_{x_2 \mod{\frac{q_1q_2r}{d_1d_2}}}  \e{\frac{d_1x_1a}{r} + \frac{d_2x_2 \overline{x}_1}{\frac{q_1r}{d_1}} + \frac{n \overline{x}_2}{\frac{q_1q_2r}{d_1d_2}} },}
where we recall that $\sumstar$ is defined as in Theorem \ref{thm:young7}. 

Next let $\psi $ be a smooth function on $\mathbb R$ and compactly supported in $(0, \infty)$ and away from 0, and define the integral transform of $\psi$ to be 
\est{ \Psi(y) = \int_{\mathbb R^4} \psi \left( \frac{x_1x_2x_3x_4}{y}\right) \prod_{j = 1}^4 \pr{\e{ -x_j} |x_j|^{-\lambda_j} \sgn(x_j)^{\delta_j}  \> dx_j},}
where $\vec \lambda = (\lambda_1, ..., \lambda_4)$, $\vec \delta = (\delta_1, ..., \delta_4)$, and $(\vec \lambda, \vec \delta) \in \mathbb C^n \times (\mathbb Z/2\mathbb Z)^n$ is the representation parameter of a cusp form on $GL(4)$.

 The function $\Psi$ can be reformulated through Mellin transform. This was done in \cite{MS} and \cite{MZ}, and we will quote the results here. 

Let $\widetilde{\psi}(s)$ is the usual Mellin transform of $\psi$ defined by
\begin{equation}
\widetilde{\psi}(s) = \int_0^\infty \psi(t) \frac{t^s}{t} dt.
\end{equation}The integral above converges for all $s \in \mathbb{C}$ since $\psi$ is compactly supported away from $0$. Next we define
$$ \fG_{\delta}(s) := \left\{ \begin{array}{ll}
\frac{\Gamma_{\mathbb R} (s)}{\Gamma_{\mathbb R}( 1- s)} &  \ \textrm{if} \  \delta \in 2 \mathbb Z \\
i \frac{\Gamma_{\mathbb R} (s+1)}{\Gamma_{\mathbb R}( 2- s)}& \ \textrm{if} \ \delta \in 2 \mathbb Z + 1,
\end{array}  \right. $$
where $\Gamma_{\mathbb R}(s) = \pi^{-s/2} \Gamma(s/2)$. Now, define

$$ G_+(s) = \prod_{j = 1}^4 \fG_{\delta_j}(s + \lambda_j)^{-1},  \ \ \ \ \ \ \ G_-(s) = \prod_{j = 1}^4 \fG_{1 + \delta_j} (s + \lambda_j)^{-1},$$
and for $\sigma > 0$ let 
\es{\label{def:Psipm} \Psi_\pm (x) = \frac{1}{2\pi i} \int_{(-\sigma)} \widetilde \psi(s) x^{s} G_{\pm}(s) \> ds .}

Then when $x > 0$ we can write
\es{\label{reformulatePsi} \Psi\pr{ x} = \Psi_+(x) + \Psi_-(x) \ \ \ \ \ \  \textrm{and} \ \ \ \ \ \ \Psi(-x) = \Psi_+(x) - \Psi_-(x).}

Now we are ready to state the Voronoi formula from \cite{MS}.
\begin{thm} \label{thm:voronoi} Let $a \in \mathbb Z$, $r \in \mathbb N$, $(a, r) = 1$, and $\psi $ be a smooth function on $\mathbb R$ and compactly supported in $(0, \infty)$ and away from 0. With notations as above, 
\est{	\sum_{n \neq 0} &A(q_2, q_1, n) \e{\frac{an}{r}} \psi(n) \\
& = r \sum_{d_1 |rq_1} \sum_{d_2 | \frac{rq_1q_2}{d_1}} \sum_{m \neq 0} \frac{A(m, d_2, d_1)}{|m|d_1d_2} \mathcal {KL}(\overline{a}, n; r, \mathbf{q}, \mathbf{d} ) \Psi\left(\frac{md_2^2d_1^3}{r^4 q_2q_1^2}\right).}
\end{thm}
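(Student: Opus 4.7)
The plan is to deduce Theorem \ref{thm:voronoi} directly from the general $GL(n)$ Voronoi summation formula of Miller and Schmid \cite{MS} specialized to $n=4$, in the form subsequently packaged by Miller--Zhou \cite{MZ}. The statement here is essentially a restatement of their result: the left-hand side is the pairing of the smooth test function $e(a\,\cdot/r)\psi(\cdot)$ against the automorphic distribution attached to an additive twist of $\phi$, and the Miller--Schmid formula transforms this pairing, via the long Weyl element of $GL(4)$, into a dual pairing involving hyper-Kloosterman sums. Consequently, the bulk of the work amounts to a careful bookkeeping of conventions -- matching our indexing $A(q_2, q_1, n)$, our definition of $\mathcal{KL}(\overline a, n; r, \mathbf q, \mathbf d)$, and our kernel $\Psi$ against the ones used in \cite{MS, MZ}.

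Were one to derive the formula from first principles, the natural route is via the additively twisted Dirichlet series
$$D(s, a/r) \;=\; \sum_{n\ne 0} \frac{A(q_2, q_1, n) \, e(an/r)}{|n|^s}.$$
By Mellin inversion, the left-hand side of the desired identity equals $\frac{1}{2\pi i} \int_{(\sigma)} \widetilde{\psi}(-s) D(s, a/r)\,ds$ for $\sigma$ large, after which one shifts the contour to the left of the critical line and invokes the twisted functional equation. The archimedean factor assembles into $G_\pm(s)$ via Stirling and, after collapsing through \eqref{def:Psipm} and \eqref{reformulatePsi}, produces $\Psi$ evaluated at $m d_2^2 d_1^3/(r^4 q_2 q_1^2)$. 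The arithmetic side of the functional equation -- obtained from a Bruhat decomposition of the long Weyl element of $GL(4)$ combined with the Hecke relations needed to express $A(q_2, q_1, n)$ through the standard coefficients $A(1,1,m)$ -- produces the hyper-Kloosterman sum $\mathcal{KL}(\overline a, n; r, \mathbf q, \mathbf d)$ with the divisor conditions $d_1 \mid rq_1$ and $d_2 \mid rq_1q_2/d_1$.

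The main technical obstacle in an independent derivation would be the explicit identification of the arithmetic side as $\mathcal{KL}$ with exactly the stated divisor indexing and the precise archimedean transform: the Bruhat cell combinatorics for $GL(4)$ are substantially more intricate than the $GL(3)$ case used by Young \cite{Young}, with two divisor parameters $d_1, d_2$ instead of one, and the reduction from generalized coefficients $A(q_2, q_1, n)$ to $A(1,1,m)$ via Hecke multiplicativity interacts nontrivially with this divisor structure. Since \cite{MS} have already carried out this bookkeeping in full generality, in practice our proof is simply a citation together with a comparison of notation.
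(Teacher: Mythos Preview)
Your proposal is correct and matches the paper's approach exactly: the paper does not give an independent proof of Theorem~\ref{thm:voronoi} but simply states it as ``the Voronoi formula from \cite{MS},'' so the result is a direct citation of Miller--Schmid (with the Mellin reformulation via $\Psi_\pm$ taken from \cite{MS,MZ}). Your additional sketch of the underlying mechanism (twisted Dirichlet series, functional equation, Bruhat combinatorics) goes beyond what the paper provides, but since the paper's own ``proof'' is a bare reference, there is nothing further to compare.
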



Since $\Psi$ can be written as $\Psi_{\pm}$ in Equation (\ref{reformulatePsi}), we focus only on studying $\Psi_{\pm}$. Now we will find an asymptotic formula for $\Psi_{\pm}$, and this will help in analysing saddle points and main terms. From now on, we will fix $\lambda_j = \alpha_j$ as in Equation (\ref{def:alphai}) and $\delta_j = 0$ and prove the following Lemma.

\begin{lem}  \label{lem:asymforpsi} Let $N \geq 1$ ,  $\lambda_j = \alpha_j$ as in Equation (\ref{def:alphai}) and $\delta_j = 0$. Suppose $\psi(x)$ is a smooth function compactly supported on $[N, 2N]$, and $\Psi_\pm(x)$ be defined as in Equation (\ref{def:Psipm}). Then for any fixed positive integer $\mathcal K \geq 1$ and  $xN \gg 1$,
	\es{\label{psiboundforbigx} \Psi_+(x) &= x \int_0^\infty \psi(y) \sum_{j = 1}^{\mathcal K} \frac{1}{(xy)^{\frac j4 + \frac 18}} \left[ c_j \e{4(xy)^{\frac 14}} \mathcal W_j(8\pi (xy)^{\frac 14}) + d_j \e{ - 4(xy)^{\frac 14}} \overline{\mathcal W_j}(8\pi (xy)^{\frac 14}) \right]  \> dy \\
		& \hskip 4in + O\pr{\pr{xN}^{\frac{-K+3}{4} - \frac 18}} }
	where function $\mathcal W_j$ is a finite linear combination of $W_k$ defined in Lemma \ref{lem:Besselresult} and $c_j, d_j$ are suitable constants depending on $\alpha_i.$ Moreover, $\Psi_-(x)$ has the same expression except value of constants. 
	
\end{lem}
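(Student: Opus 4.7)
The plan is to start from the Mellin--Barnes representation in Equation \eqref{def:Psipm} and interchange orders of integration using $\widetilde\psi(s) = \int_0^\infty \psi(y) y^{s-1}\, dy$.  This reduces the problem to the asymptotic analysis of the ``kernel''
$$H_\pm(u) := \frac{1}{2\pi i}\int_{(-\sigma)} u^s\, G_\pm(s)\, ds, \qquad \Psi_\pm(x) = \int_0^\infty \psi(y)\, y^{-1}\, H_\pm(xy)\, dy,$$
so that once $H_\pm(u)$ is shown, for $u\gg 1$, to admit the asymptotic expansion
$$H_\pm(u) = u\sum_{j=1}^{\mathcal K}\frac{1}{u^{j/4+1/8}}\bigl[c_j\, e(4u^{1/4})\,\mathcal{W}_j(8\pi u^{1/4}) + d_j\, e(-4u^{1/4})\overline{\mathcal{W}_j}(8\pi u^{1/4})\bigr]+O\bigl(u^{(-\mathcal K+3)/4 - 1/8}\bigr),$$
the stated form of Lemma \ref{lem:asymforpsi} follows at once by setting $u=xy$.

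To analyze $H_\pm(u)$, I would first massage $G_\pm(s)$ into a Stirling-friendly shape.  With $\delta_j=0$, $G_+(s)=\prod_{j=1}^{4}\Gamma_{\R}(1-s-\alpha_j)/\Gamma_{\R}(s+\alpha_j)$, and the reflection and duplication formulas for $\Gamma$ give
$$G_+(s) = \frac{(2\pi)^{4s+\sum_j\alpha_j}}{16\,\prod_{j=1}^4 \cos\!\bigl(\tfrac{\pi}{2}(s+\alpha_j)\bigr)\,\Gamma(s+\alpha_j)},$$
with an analogous expression for $G_-(s)$ in which each cosine is replaced by a sine.  Writing $\cos(\tfrac{\pi}{2}(s+\alpha_j))=\tfrac{1}{2}(e^{i\pi(s+\alpha_j)/2}+e^{-i\pi(s+\alpha_j)/2})$ and expanding the product of four such terms produces a sum of $16$ integrals, each carrying a phase $e^{i\pi m s/2}$ with $m\in\{-4,-2,0,2,4\}$.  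Next, the multiplication theorem applied to $\prod_{j=1}^4 \Gamma(s+\alpha_j)$ (equivalently Stirling in a strip) rewrites this product as $4^{-4s+C}(2\pi)^{3/2}\Gamma(4s+\sum_j\alpha_j)\cdot A(s)$, where $A(s)$ admits an asymptotic expansion $\sum_{j\ge 0} a_j s^{-j}$.  Substituting this back turns each of the $16$ contributions into an integral of the form
$$\frac{1}{2\pi i}\int_{(-\sigma)}\frac{A(s)\,(16\pi u^{1/4})^{4s}\,e^{i\pi m s/2}}{\Gamma(4s+C)}\,ds.$$

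Shifting the contour far to the right and applying the saddle-point principle, only the contributions with $m=\pm 4$ survive meaningfully: their saddle equation $\log u - 4\log s + i\pi m/2 = 0$ has a real saddle $s\sim u^{1/4}$ on the positive/negative imaginary axis, yielding the oscillations $e(\pm 4 u^{1/4})$.  The remaining $|m|\le 2$ contributions are negligible past any desired power of $u$, since their contour can be pushed off to infinity without encountering a saddle.  What remains is to identify the surviving integrals with the contour representation of the Bessel-type functions treated in Lemma \ref{lem:Besselresult}: after the substitution $s\mapsto (4s+C)/4$ and expansion of $A$ as a power series in $s^{-1}$, the $j$-th term of the expansion contributes precisely the factor $(xy)^{-j/4-1/8} \mathcal{W}_j(8\pi(xy)^{1/4})$, where each $\mathcal W_j$ is a finite linear combination of the standard $W_k$ of Lemma \ref{lem:Besselresult}.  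Truncating the expansion of $A$ at $\mathcal K$ terms and bounding the tail by shifting the contour and using rapid decay of $\Gamma(4s+C)^{-1}$ yields the claimed remainder $O((xN)^{(-\mathcal K+3)/4-1/8})$ under the hypothesis $xN\gg 1$.

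The main obstacle I anticipate is the bookkeeping involved in the two reductions above: tracking how the four shifted Gammas $\Gamma(s+\alpha_j)$ collapse via duplication/Stirling into a single $\Gamma(4s+C)$ while leaving behind a controllable symbolic factor $A(s)$, and verifying that of the sixteen $m$-phases only $m=\pm 4$ contribute to leading order.  All of this is essentially a careful implementation of classical Mellin-Barnes asymptotics for Meijer $G$-functions on $GL(4)$, but the precise identification of the coefficients $c_j, d_j$ and of the $\mathcal W_j$ in terms of Lemma \ref{lem:Besselresult}'s $W_k$ requires delicate matching of the Stirling expansion term by term.
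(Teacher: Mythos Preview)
Your reduction to a kernel $H_\pm(u)$ is fine, and the identity for $G_+(s)$ in terms of $\prod_j \cos\bigl(\tfrac{\pi}{2}(s+\alpha_j)\bigr)\Gamma(s+\alpha_j)$ is correct. But your next step fails as written: the four cosines sit in the \emph{denominator} of $G_+(s)$, so writing each as $\tfrac12(e^{i\pi(s+\alpha_j)/2}+e^{-i\pi(s+\alpha_j)/2})$ and ``expanding the product'' does not produce a sum of $16$ integrals---the reciprocal of a sum is not a sum of reciprocals. One can rescue the idea by passing to the dual form $G_+(s)=C\,(2\pi)^{4s}\prod_j\sin\bigl(\tfrac{\pi}{2}(s+\alpha_j)\bigr)\Gamma(1-s-\alpha_j)$ (same reflection trick, applied to $\Gamma(w/2)^{-1}$ instead of $\Gamma((1-w)/2)$), where the trig factors are now in the numerator and a $16$-term expansion does go through. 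After that your sketch becomes essentially the classical steepest-descent asymptotics for a Meijer $G$-function, which can be made to work, though the saddle-point computation and the disposal of the $|m|\le 2$ phases still need substance beyond what you have written.

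The paper takes a different and more direct route. After the substitution $s\mapsto -2s+1$, it introduces a single \emph{reference} ratio $4^{8s-2}\Gamma(4s-\tfrac32)/\Gamma(\tfrac12-4s)$ and shows, via Stirling, that the quotient of the true Gamma ratio by this reference equals $1+H(s)$ with $H(s)=\sum_{j\ge 1}b_js^{-j}+O(|s|^{-\mathcal K-1})$. The reference integral $I_1$ is then evaluated \emph{exactly}: after the change $w=4s-\tfrac32$, one shifts the contour to $\tRe w=-\infty$, collects the residues at $w=-2,-3,\dots$, and the resulting series is precisely the power-series representation \eqref{asympJxSm} of $J_2\bigl(8\pi(xy)^{1/4}\bigr)$; the large-argument expansion \eqref{asympJxbig} then yields the $j=1$ term of the lemma. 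The correction terms $b_js^{-j}$ are handled the same way and produce lower-order Bessel functions $J_1, J_0, \dots$, giving the remaining terms. This avoids any saddle-point analysis or separation of $m$-phases: the identification with Bessel functions is exact at the level of contour integrals, and the asymptotic expansion of the lemma is simply the known large-argument expansion of $J_k$. Both routes are viable in principle, but you should either fix the denominator issue before proceeding with steepest descent, or adopt the paper's reference-ratio device, which is considerably cleaner to execute.
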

We will focus only on proving $\Psi_+$ as the proof of $\Psi_-$ can be proceeded in the same way. The proof will follow the idea of the proof of Lemma 3 in \cite{Ivic} and Lemma 6.1 in \cite{Li}, which will be provided  in Appendix \ref{sec:proofofasympPsi}.



\section{Fourier analysis on $\mathcal I(R, K; \ell)$} \label{sec:IRKinitial}
Firstly, we square out the expression of $\mathcal I(R, K; \ell)$ in (\ref{def:IRK}), put a smooth weight in $k$, and use the fact that $\frac {1}{r^2} \ll \frac{1}{R^2}$. We then obtain that 
\est{ \mathcal I(R, K; \ell) &\ll \frac{T}{NR^2} \sum_{r \sim R}  \sum_{k}  w_1\left( \frac kK \right) \int_{-T^{\epsilon}}^{T^{\epsilon}} g(u)\sumtwo_{n_1, n_2} A(1, \ell, n_1) A(1, \ell, n_2) w_3\left(\frac {n_1}N\right) w_3\left(\frac {n_2}N\right) \\
	 &\times \sumstar_{a_1 \mod r} \sumstar_{a_2 \mod r} \e{\frac{a_1k - a_2k}{r}} \e{\frac{\bar{a}_1n_1 - \bar{a}_2n_2}{r}} \e{\frac{un_1 - un_2}{rT}}  \> du,}
 where we recall that $w_1$ is a smooth compactly supported function defined as above.  Then we apply Poisson summation to the sum over $k$.
 \est{\sum_{k}& w_1\pr{\frac kK}\e{\frac{(a_1 - a_2)k}{r}} = \sum_{c \mod r} \e{\frac{(a_1 - a_2)c}{r}} \sum_{k \equiv c \mod r} w_1\pr{\frac kK} \\
&= \sum_{c \mod r} \e{\frac{(a_1 - a_2)c}{r}} \sum_{j } \frac{K}{r} \e{\frac{cj}{r}} \hat w_1\pr{\frac{Kj}{r}} = K \sum_{j \equiv a_2 - a_1 \mod r} \hat w_1\pr{\frac{Kj}{r}}.}
Therefore
\es{\label{boundI1} \mathcal I(R, K ; \ell) &\ll  \frac{TK}{NR^2} \sum_{r \sim R}  \sum_{j}  \hat w_1\left( \frac{Kj}{r} \right)  \sumstar_{\substack{a_1 \mod r \\ (a_1 + j, \ r) = 1}} \int_{-T^{\epsilon}}^{T^{\epsilon}}g(u) \sumtwo_{n_1, n_2} A(1, \ell, n_1) A(1, \ell, n_2) w_3\left(\frac {n_1}N\right) w_3\left(\frac {n_2}N\right) \\
	&\times   \e{\frac{\bar{a}_1n_1 - \overline{(a_1 + j)}n_2}{r}} \e{\frac{un_1 - un_2}{rT}}  \> du \\
&\ll  \frac{TK}{NR^2} \sum_{r \sim R}  \sum_{j \ll \frac{RT^{\epsilon}}{K}}  \sumstar_{\substack{a_1 \mod r \\ (a_1 + j, \ r) = 1}} \int_{-T^{\epsilon}}^{T^{\epsilon}} g(u)\left( \left|  \sum_{n}A(1, \ell, n) w_3\left(\frac {n}N\right)  \e{\frac{\bar{a}_1n}{r}}\e{\frac{un}{rT}}\right|^2 \right. \\
& \ \ \ \ \ + \left. \left|  \sum_{n}A(1, \ell, n) w_3\left(\frac {n}N\right)  \e{\frac{\overline{(a_1 +j)}n}{r}}\e{\frac{un}{rT}}\right|^2  \right) \>du \\
&\ll  \frac{T^{1+\epsilon}}{NR}  \int_{-T^{\epsilon}}^{T^{\epsilon}} g(u)  \sum_{r \sim R} \sumstar_{\substack{a_1 \mod r }} \left|  \sum_{n}A(1, \ell, n) w_3\left(\frac {n}N\right) \e{\frac{un}{rT}} \e{\frac{{a}_1n}{r}}\right|^2 \> du, }
where we note that the second term involving $\e{\frac{\overline{(a_1+j)}n}{r}}$ becomes $\e{\frac{\bar a_1 n}{r}}$ by a change of variables on $a_1$, and we have extended the sum over $a_1$ by positivity.

Note that $N \gg TR$ since $R \ll X \le \frac{N}{T}$.  We apply the Voronoi Summation formula (Theorem \ref{thm:voronoi}) to the sum over $n$  in Equation (\ref{boundI1}). 
Let 

\es{\label{def:fx} f(x; u, r) := f(x) =  w_3\left(\frac {x}N\right) \e{\frac{ux}{rT}}. }

Then

\es{\label{eqn:aftervoronoi} &\sum_{n}A(1, \ell, n) f(n) \e{-\frac{{a}_1n}{r}} \\
	&= |r| \sum_{d_1 | r \ell }  \ \sum_{d_2 | \frac{r\ell}{d_1}} \ \sum_{m \neq 0} \frac{A(m, d_2, d_1)}{|m |d_1d_2} \mathcal{KL }( \overline{a_1},\, m \, ; r, (\ell, 1) , (d_1, d_2)) F\pr{\frac{m \, d_2^2 \, d_1^3}{r^4\,\ell^2}; r} ,}
where $F$ is defined analogously to $\Psi$, and 
\est{\mathcal {KL}(\overline{a_1}, \,m \, ; r, ( \ell, 1 ), (d_1, d_2)) =  \sumstar_{x_1 \mod {\frac{r\ell}{d_1}}} \sumstar_{x_2 \mod {\frac{r\ell}{d_1d_2}} } \e{\frac{d_1x_1\overline{a_1}}{r} + \frac{d_2x_2\overline{x_1}}{\frac{ r\ell}{d_1}}  + \frac{m\,\overline{x_2}}{\frac{ r\ell}{d_1d_2}} } .}

After Cauchy-Schwarz inequality in $d_1, d_2$ and considering only positive $m$ due to symmetry, now we need to bound
\est{&\mathcal{I}_1 (R, K; \ell) := \frac{T^{1 + \epsilon}R}{N}     \int_{-T^{\epsilon}}^{T^{\epsilon}} g(u) \sum_{r \sim R} \ \sumstar_{\substack{a_1 \mod r }} \sum_{d_1 | r\ell} \sum_{d_2 | \frac{r\ell}{d_1}} \frac{1}{d_1^2d_2^2}\\
	&\hskip 1.5in \times \left|  \ \sum_{m > 0} \frac{A(m, d_2, d_1)}{m} \mathcal {KL}( \overline{a_1},\, m \, ; r, (\ell, 1) , (d_1, d_2)) F_{\pm}\pr{\frac{md_2^2 d_1^3 }{r^4\,\ell^2}; r}\right|^2 \> du.}

\section{Simplifying exponential sums} \label{sec:simplifyIRK}

In this section, we deal with the exponential sum in the hyper-Kloosterman sum. Moreover, the bound for $F_-$ can be evaluated in the same way as $F_+$, so we consider only $F_+$. By Cauchy-Schwarz inequality, changing variable from $a_1$ to $\overline{a_1}$ and completing summation over $a_1$, we have $\mathcal I_1(R,K ; \ell)$ is bounded by
\est{ &\ll \frac{T^{1 + \epsilon}R}{N}     \int_{-T^{\epsilon}}^{T^{\epsilon}} g(u) \sum_{r \sim R} \sum_{\substack{a_1 \mod r }}  \sum_{d_1 | r\ell} \sum_{d_2 | \frac{r\ell}{d_1}} \frac{1}{d_1^2d_2^2}\\
	&\hskip 1.5in \times \left|  \ \sum_{m > 0} \frac{A(m, d_2, d_1)}{m} \mathcal {KL}( \overline{a_1},\, m \, ; r, (\ell, 1) , (d_1, d_2)) F_{+}\pr{\frac{md_2^2 d_1^3 }{r^4\,\ell^2}; r}\right|^2 \> du. \\
&= \frac{T^{1 + \epsilon}R}{N}     \int_{-T^{\epsilon}}^{T^{\epsilon}} g(u) \sum_{r \sim R} \sum_{\substack{a_1 \mod r }} \sum_{d_1 | r\ell} \sum_{d_2 | \frac{r\ell}{d_1}} \frac{1}{d_1^2d_2^2} \\
& \ \ \ \times  \sum_{m_1, m_2 > 0} \frac{A(m_1, d_2, d_1) \overline{A(m_2, d_2, d_1)}}{m_1m_2}  F_{+}\pr{\frac{m_1d_2^2d_1^3}{r^4\,\ell^2}; r} \overline{F_{+}\pr{\frac{m_2d_2^2d_1^3 }{r^4\,\ell^2}; r} } \sumstar_{x_1 \mod{\frac{r\ell}{d_1}}} \sumstar_{x_1' \mod{\frac{r\ell}{d_1}}}  \\
&\ \ \ \times \sumstar_{x_2 \mod {\frac{r\ell}{d_1d_2}} } \sumstar_{x_2' \mod {\frac{r\ell}{d_1d_2}} } \e{\frac{d_1(x_1 - x_1'){a_1}}{r} + \frac{d_2(x_2\overline{x_1} - x_2'\overline{x_1'}) }{\frac{r\ell}{d_1}}  + \frac{m_1\,\overline{x_2} - m_2\,\overline{x_2'}}{ \frac{r\ell}{d_1d_2}} } \> du. }
Next we sum over $a_1$ and see that $d_1x_1 \equiv d_1x_1' \bmod r$ by orthogonality, which impllies $x_1 \equiv x_1' \bmod \frac{r}{(r, d_1)}$.  Thus we may write $x_1' = x_1 + \frac{r}{(r, d_1)}y$, where $y$ runs through those residues mod $\frac{(r, d_1)\ell}{d_1}$ such that $(x_1 + \frac{r}{(r, d_1)}y, \frac{r\ell}{d_1}) = 1$.  For simplicity, let $\sumsharp_{y \bmod \frac{(r, d_1)\ell}{d_1}}$ denote the sum over such $y$.  Thus our sum becomes
\es{ \label{eqn:sum1} &\frac{T^{1 + \epsilon}R}{N}     \int_{-T^{\epsilon}}^{T^{\epsilon}} g(u) \sum_{r \sim R} r \sum_{d_1 | r\ell} \sum_{d_2 | \frac{r\ell}{d_1}} \frac{1}{d_1^2d_2^2} \sumstar_{x_1 \bmod \frac{r \ell}{d_1}} \;\; \sumsharp_{y \bmod \frac{(r, d_1)\ell}{d_1}}  S_1 S_2\> du \\ 
	&\le \frac{T^{1 + \epsilon}R^2}{N}     \int_{-T^{\epsilon}}^{T^{\epsilon}} g(u) \sum_{r \sim R} \sum_{d_1 | r\ell} \sum_{d_2 | \frac{r\ell}{d_1}} \frac{1}{d_1^2d_2^2}  \sumstar_{x_1 \bmod \frac{r \ell}{d_1}} \;\; \sumsharp_{y \bmod \frac{(r, d_1)\ell}{d_1}}  (|S_1|^2 +  |S_2|^2 )\> du,
}where
\begin{align*}
S_1 = \sum_{m_1 > 0} \frac{A(m_1, d_2, d_1) }{m_1}  F_{+}\pr{\frac{m_1d_2^2d_1^3}{r^4\,\ell^2}; r}  \sumstar_{x_2 \bmod {\frac{r\ell}{d_1d_2}} } \e{ \frac{d_2x_2\overline{x_1}}{\frac{r\ell}{d_1}}  + \frac{m_1\,\overline{x_2}}{\frac{r\ell}{d_1d_2}} },
\end{align*}and
\begin{align*}
S_2  = \sum_{m_2 > 0} \frac{\overline{A(m_2, d_2, d_1)}}{m_2}   \overline{F_{+}\pr{\frac{m_2d_2^2d_1^3 }{r^4\,\ell^2}; r}}\sumstar_{x_2' \bmod {\frac{r\ell}{d_1d_2}} } \e{ \frac{-d_2x_2'\overline{x_1 + \frac{r}{(r, d_1)}y} }{\frac{r\ell}{d_1}}  + \frac{- m_2\,\overline{x_2'}}{\frac{r\ell}{d_1d_2}} }.
\end{align*}
Inside $S_2$, we may use the change of variables $u = \overline{x_1 + \frac{r}{(r, d_1)}y}$.  The condition on $y$ then becomes that $(\overline{u} - \frac{r}{(r, d_1)}y, \frac{r\ell}{d_1}) = 1$.  After this change of variables, we extend the sum over $y$ to all residues mod $\frac{(r, d_1)\ell}{d_1}$.  Thus,
\begin{align*}
&\sum_{r \sim R}  \sum_{d_1 | r\ell} \sum_{d_2 | \frac{r\ell}{d_1}} \frac{1}{d_1^2d_2^2}  \sumstar_{x_1 \bmod{\frac{r\ell}{d_1}}} \;\;\sumsharp_{y \bmod \frac{(r, d_1)\ell}{d_1}} |S_2|^2 \\
&\le \sum_{r \sim R}  \sum_{d_1 | r\ell} \sum_{d_2 | \frac{r\ell}{d_1}} \frac{1}{d_1^2d_2^2}   \sumstar_{u \bmod{\frac{r\ell}{d_1}}}\;\;\sum_{y \bmod \frac{(r, d_1)\ell}{d_1}} |S_2|^2 = \frac{(r, d_1)\ell}{d_1} \sum_{r \sim R} \sum_{d_1 | r\ell} \sum_{d_2 | \frac{r\ell}{d_1}} \frac{1}{d_1^2d_2^2} \sumstar_{x_1 \bmod{\frac{r\ell}{d_1}}} |S_1|^2.
\end{align*}By a further change of variables from $x_1 $ to $\overline{x_1}$, the fact that $S_1$ is independent of $y$ and $\frac{(r, d_1)}{d_1} \leq 1$, the quantity in \eqref{eqn:sum1} is bounded by
\begin{align*}
&\ell \frac{T^{1 + \epsilon}R^2}{N}     \int_{-T^{\epsilon}}^{T^{\epsilon}}g(u) \sum_{r \sim R}  \sum_{d_1 | r\ell} \sum_{d_2 | \frac{r\ell}{d_1}} \frac{1}{d_1^2d_2^2} \sumstar_{x_1 \bmod{\frac{r\ell}{d_1}}}|S_1|^2 du \\
&\leq \ell \frac{T^{1 + \epsilon}R^2}{N}     \int_{-T^{\epsilon}}^{T^{\epsilon}}g(u) \sum_{r \sim R} \sum_{d_1 | r\ell} \sum_{d_2 | \frac{r\ell}{d_1}} \frac{1}{d_1^2d_2^2}  \\
&\times \sumstar_{x_1 \bmod{\frac{r\ell}{d_1}}}\left| \sum_{m > 0} \frac{A(m, d_2, d_1) }{m_1}  F_{+}\pr{\frac{md_2^2d_1^3}{r^4\,\ell^2}; r}  \sumstar_{x_2 \bmod {\frac{r\ell}{d_1d_2}} } \e{ \frac{d_2x_2 x_1}{\frac{r\ell}{d_1}}  + \frac{m\,\overline{x_2}}{\frac{r\ell}{d_1d_2}} }
\right|^2 du.
\end{align*}
Now we may extend the sum over $x_1$ to all residues mod $\frac{r\ell}{d_1}$ by positivity.  Opening the square produces two sums $x_2, x_2' \bmod \frac{r\ell}{d_1d_2}$.  However, by orthogonality, the sum over $x_1$ gives the condition $d_2x_2 \equiv d_2x_2' \bmod \frac{r\ell}{d_1}$, which implies $x_2 \equiv x_2' \bmod \frac{r\ell}{d_1d_2}$ because $d_2 | \frac{r\ell}{d_1}$. So the above sum is
\begin{align*} 
\ll \ell^2 \frac{T^{1 + \epsilon}R^3}{N}     \int_{-T^{\epsilon}}^{T^{\epsilon}} g(u)\sum_{r \sim R}  \sum_{d_1 | r\ell} \sum_{d_2 | \frac{r\ell}{d_1}} \frac{1}{d_1^3d_2^2} \sumstar_{x \bmod{\frac{r\ell}{d_1d_2}}} \left| \sum_{m > 0} \frac{A(m, d_2, d_1) }{m}  F_{+}\pr{\frac{md_2^2d_1^3}{r^4\,\ell^2}; r} \e{\frac{m\,x}{\frac{r\ell}{d_1d_2}}}
\right|^2 du,
\end{align*} 
where we have used a change of variables $x = \overline{x_2}$. Next we write $r_1 = r\ell$, switch the sums $d_1, d_2$ and $r$ and drop condition $\ell | r_1$. Thus the above expression is 
 \es{ \label{eqn:sum3}
&\ll \ell^2 \frac{T^{1 + \epsilon}R^3}{N}     \int_{-T^{\epsilon}}^{T^{\epsilon}} g(u)\sum_{d_1 \ll R\ell} \sum_{d_2 \ll R\ell} \frac{1}{d_1^3d_2^2} \sum_{\substack{r_1 \sim R\ell \\ d_1d_2 | r_1}}   \sumstar_{x \bmod{\frac{r_1}{d_1d_2}}} \\
&\hskip 1.5in\left| \sum_{m > 0} \frac{A(m, d_2, d_1) }{m}  F_{+}\pr{\frac{md_2^2d_1^3 \ell^2}{r_1^4}; \frac{r_1}{\ell}} \e{\frac{m\,x}{\frac{r_1}{d_1d_2}}}
\right|^2 du \\
&= \ell^2 \frac{T^{1 + \epsilon}R^3}{N}     \int_{-T^{\epsilon}}^{T^{\epsilon}} g(u)\sum_{d_1 \ll R\ell} \sum_{d_2 \ll R\ell} \frac{1}{d_1^3d_2^2} \sum_{\substack{r_1 \sim \frac{R\ell}{d_1d_2} }}   \sumstar_{x \bmod{r_1}} \\
&\hskip 1.5in\left| \sum_{m > 0} \frac{A(m, d_2, d_1) }{m}  F_{+}\pr{\frac{m\ell^2}{r_1^4d_1d_2^2}; \frac{r_1d_1d_2}{\ell}} \e{\frac{m\,x}{r_1}}
\right|^2 du
} 

Now we split $m$ into two ranges.  We let $\mathcal I_{sm}(R, K; \ell)$ be the expression on the right side of \eqref{eqn:sum3} with $m \le \frac{R^4\ell^2}{Nd_2^2d_1^3} T^{\epsilon_1}$ and $\mathcal I_{big}(R, K; \ell )$ be the same expression for $m > \frac{R^4\ell^2}{Nd_2^2 d_1^3} T^{\epsilon_1}$, where $\epsilon_1$ is a fixed small constant to be determined later. 

Since $|a+b|^2  \le 2(|a|^2 + |b|^2)$, it now suffices to prove the following Propositions.

\begin{prop}\label{prop:sm} With notations defined as above, 
	\begin{equation*}
	\mathcal I_{sm}(R, K ; \ell) \ll T^{2 + \epsilon}
	\end{equation*}
where the implied constant depends on $\epsilon$.
\end{prop}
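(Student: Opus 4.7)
\textbf{Proof plan for Proposition \ref{prop:sm}.} The key structural observation is that for $m$ in the small range $m \le M := R^4\ell^2 T^{\epsilon_1}/(Nd_2^2 d_1^3)$, the rescaled argument $y_m N = mN d_1^3 d_2^2/(R^4\ell^2) \le T^{\epsilon_1}$ is small, so $F_+(y_m)$ is well-approximated by the leading residue terms obtained by shifting the Mellin contour of $F_+$ to the right. In this regime, $F_+$ essentially factors and the additive large sieve delivers the desired bound.

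Starting from the Mellin representation $F_+(y; q) = \frac{1}{2\pi i}\int_{(-\sigma)} \widetilde{f}(s; q, u)\, y^s\, G_+(s)\, ds$, I will shift the contour to $\mathrm{Re}(s) = 3/2$. Since $G_+(s) = \prod_{j=1}^4 \Gamma_{\mathbb{R}}(1-s-\alpha_j)/\Gamma_{\mathbb{R}}(s+\alpha_j)$ has a simple pole at each $s = 1 - \alpha_j$ (and the Luo--Rudnick--Sarnak bound places all $\mathrm{Re}(\alpha_j)$ strictly less than $1/2$), this shift crosses exactly four poles and yields
$$F_+(y_m; q_{r_1}) = \sum_{j=1}^{4} C_j\, \widetilde{f}(1-\alpha_j;\, q_{r_1},\, u)\, y_m^{1-\alpha_j} + E(y_m; q_{r_1}, u),$$
where on $\mathrm{Re}(s) = 3/2$ one has $|G_+(s)| \ll |\mathrm{Im}(s)|^{-4}$ and $|\widetilde{f}(s)| \ll N^{3/2}$, so $|E| \ll (y_m N)^{3/2} \ll T^{3\epsilon_1/2}$, which will contribute negligibly.

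The main terms have the crucial property that the $m$-dependence separates cleanly from the $r_1$-dependence, producing
$$\sum_{m \le M} F_+(y_m) \frac{A(m,d_2,d_1)}{m} e\!\left(\tfrac{mx}{r_1}\right) = \sum_{j=1}^{4} C_j\, \widetilde{f}(1-\alpha_j; q_{r_1}, u) \!\left(\tfrac{\ell^2}{r_1^4 d_1 d_2^2}\right)^{\!\!1-\alpha_j}\!\! \sum_{m \le M} \!\! A(m,d_2,d_1)\, m^{-\alpha_j} e\!\left(\tfrac{mx}{r_1}\right),$$
up to the $E$-error. I then apply the additive large sieve to each inner sum, obtaining
$$\sum_{r_1 \sim R_1}\sumstar_{x \bmod r_1} \left|\sum_{m \le M} A(m,d_2,d_1) m^{-\alpha_j} e\!\left(\tfrac{mx}{r_1}\right)\right|^2 \ll (R_1^2 + M)(d_1 d_2 M)^{1+\epsilon},$$
with $R_1 \sim R\ell/(d_1 d_2)$, where the right-hand side uses Lemma \ref{lem:ramanujanonaverage} to bound $\sum_{m \le M} |A(m,d_2,d_1)|^2$.

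Finally, combining the uniform bound $|\widetilde{f}(1-\alpha_j; q_{r_1}, u)|^2 \ll N^{2+\epsilon}$ with the factor $|r_1^{4(1-\alpha_j)}|^{-2} \asymp R_1^{-8}$ and summing over $r_1 \sim R_1$ contributes $R_1^{-7}$; plugging into the outer expression $\ell^2 T^{1+\epsilon} R^3/N \cdot \sum_{d_1,d_2} d_1^{-3} d_2^{-2}(\ldots)$, and using the constraints $R \le N/T$ and $N \ll T^{2+\epsilon}/\ell^2$, routine algebra shows both the $R_1^2$ and $M$ contributions from the large sieve reduce to $\ll T^{2 + O(\epsilon) + O(\epsilon_1)}$ after summing a geometric-like series over $d_1, d_2$. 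The main obstacle will be the careful bookkeeping of the six parameters $d_1, d_2, R, \ell, N, T$ through all these manipulations, and ensuring that the shift past the complex poles $s = 1 - \alpha_j$ is uniformly justified; a secondary issue is that the bound on the error term $E$ must be carried through to verify it remains negligible after averaging over $(r_1, x)$.
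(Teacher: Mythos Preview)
Your overall strategy matches the paper's: express $F_+$ via its Mellin integral, shift the contour to the right past (some or all of) the poles at $s=1-\alpha_j$, and feed the resulting pieces into the additive large sieve together with Lemma~\ref{lem:ramanujanonaverage}. Two of your steps, however, do not go through as written.

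First, the estimates $|\widetilde{f}(1-\alpha_j)|^2 \ll N^{2+\epsilon}$ and $|r_1^{-4(1-\alpha_j)}|^2 \asymp R_1^{-8}$ implicitly assume $\tRe\alpha_j = 0$, i.e.\ the Ramanujan conjecture for $\phi$, which is not available. With only the Luo--Rudnick--Sarnak bound $|\tRe\alpha_j| < 1/2$, one must set $\lambda = 1-\alpha_j$ and carry the exponent $\tRe\lambda \in (1/2, 3/2)$: in fact $|\widetilde{f}(1-\alpha_j)|^2 \ll N^{2\tRe\lambda}$ and $|r_1^{-4\lambda}|^2 \asymp R_1^{-8\tRe\lambda}$, while after the large sieve the coefficient sum contributes $M^{2\tRe\lambda-1} d_1 d_2$. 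These combine to $\bigl(NMd_1^3d_2^2/(R^4\ell^2)\bigr)^{2\tRe\lambda} d_1 d_2/M$. The paper's key device here is the elementary observation that since $Y := NMd_1^3d_2^2/(R^4\ell^2) \le T^{\epsilon_1}$ and $\tRe\lambda \ge 1/2$, one has $Y^{2\tRe\lambda} \ll Y\,T^{\epsilon}$ uniformly in $\lambda$; after this the bookkeeping closes. Your ``routine algebra'' does not close with the exponents you wrote down.

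Second, the remainder $E$ on the shifted line is not negligible via the pointwise bound $|E| \ll T^{3\epsilon_1/2}$ alone: bounding the $m$-sum trivially and then summing over $x \bmod r_1$ and $r_1 \sim R\ell/(d_1d_2)$ yields a contribution of order $\ell^4 R^5 T^{1+\epsilon}/N$, which for $\ell=1$, $R\asymp N/T$, $N\asymp T^2$ is of size $T^{4}$. The paper does not discard the contour integral; it keeps it on a line just past the threshold where the Gamma ratio becomes absolutely integrable in $t$, applies Cauchy--Schwarz in the $s$-variable to separate $m$ from $r_1$, and then runs the large sieve inside the $s$-integral. The resulting bound is identical in shape to that of the residue terms. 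Your error $E$ needs exactly this treatment rather than a trivial estimate.
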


\begin{prop}\label{prop:big} With notations defined as above,
	\begin{equation*}
	\mathcal I_{big}(R, K; \ell) \ll T^{2 + \epsilon}
		\end{equation*}
	where the implied constant depends on $\epsilon$.
\end{prop}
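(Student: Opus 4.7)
The plan is to apply the asymptotic expansion from Lemma~\ref{lem:asymforpsi} to $F_+\bigl(m\ell^2/(r_1^4 d_1 d_2^2);\,r_1 d_1 d_2/\ell\bigr)$ to extract the leading oscillatory behavior. In the big-$m$ regime $m \gg R^4\ell^2 T^{\epsilon_1}/(N d_2^2 d_1^3)$, the argument $xN$ is large enough that truncating the expansion at a suitable depth $\mathcal K$ leaves only a negligible error, and the leading summands have the shape
$$x \int_0^\infty w_3(y/N)\, e\!\left(\frac{uy}{(r_1 d_1 d_2/\ell)T}\right) (xy)^{-j/4-1/8}\, \mathcal W_j\!\left(8\pi (xy)^{1/4}\right) e\!\left(\pm 4(xy)^{1/4}\right)\, dy.$$
A stationary phase analysis in $y$ (balancing $uy/(\cdots)$ against $\pm 4(xy)^{1/4}$) then produces an explicit phase $\Phi(m,u,r_1)$ in $m$, along with a slowly varying amplitude.

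After this reduction, the inner sum becomes, on each dyadic scale $m\sim M$, essentially
$$\sum_{m\sim M} \frac{A(m,d_2,d_1)}{m}\,(\text{smooth amplitude})\, e\!\left(\Phi(m,u,r_1)\right) e\!\left(\frac{mx}{r_1}\right).$$
Opening the square yields a double sum over $m_1,m_2\sim M$ with combined phase $e(\Phi(m_1,u,r_1)-\Phi(m_2,u,r_1))\,e((m_1-m_2)x/r_1)$. After a dyadic cut $|u|\sim U$, integration against $g(u)$ acts as a near-orthogonality filter: as anticipated in \S\ref{subsec:sketch}, the original constraint $|n_1-n_2|\ll RT^{1+\epsilon}/U$ on the primal side is mirrored on the dual side, forcing $|m_1^{1/4}-m_2^{1/4}|$ to lie in a narrow range determined by $R$, $T$ and $U$.

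To exploit this, I would dissect the $m_i$-ranges into windows of width $\Delta$, chosen so that on each window the phase $\Phi(m_i,u,r_1)$ linearizes in $m_i$ to within acceptable error, while still being short enough that the near-diagonal condition becomes effectively decoupled. After linearization, the residual exponential in $m$ on each window is of the form $e(\beta m)$ with $\beta$ depending on the outer parameters; one may then complete the sum over $x\bmod r_1$ to invoke orthogonality, after which the estimate reduces to second moments of Fourier coefficients of the form $\sum_{m\sim M'}|A(m,d_2,d_1)|^2$, controlled by Lemma~\ref{lem:ramanujanonaverage}. Summing the contributions over windows and over the outer parameters $d_1$, $d_2$, $r_1$, $U$ should then produce the bound $T^{2+\epsilon}$.

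The main obstacle is a calibration problem: the window width $\Delta$ must be chosen uniformly across the parameter ranges $(R,K,\ell,N,U)$ so that neither the linearization error on each window nor the combinatorial loss from the number of windows exceeds $T^\epsilon$ in total. The regime where $R$ approaches $X$, $U$ is small, and $M$ sits at the top of the big-$m$ range is the most delicate, because the effective phase $\Phi$ varies most rapidly there; verifying that the cutoff parameter $\epsilon_1$ separating $\mathcal I_{sm}$ and $\mathcal I_{big}$ can be fixed consistently with the dissection, without compromising either Proposition~\ref{prop:sm} or Proposition~\ref{prop:big}, is where the most technical care is required.
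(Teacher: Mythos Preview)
Your outline is close in spirit to the paper's argument: both use the asymptotic expansion of $F_+$ from Lemma~\ref{lem:asymforpsi}, both extract a near-diagonal constraint $|m_1-m_2|\ll\mathcal L$ on the dual variables, both dissect the $m$-range into short windows, and both finish with the Rankin--Selberg bound from Lemma~\ref{lem:ramanujanonaverage}. One cosmetic difference is the order of operations: you evaluate the $y$-integral by stationary phase first and only then integrate in $u$, whereas the paper opens the square immediately, lets the $u$-integral force $|y_1-y_2|\ll RT^{1+\epsilon}/(|U|N)$, and then uses integration by parts in $y_2$ to obtain the constraint on $m_1-m_2$. Both routes lead to the same $\mathcal L$, so this is not a problem.

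There is, however, a genuine gap at the endgame. After you linearize $\Phi(m,u,r_1)$ on a window, the resulting linear phase is $e\bigl((\beta+x/r_1)m\bigr)$ with $\beta=\partial_m\Phi(\eta,u,r_1)$, and this $\beta$ depends on $r_1$. ``Completing the sum over $x\bmod r_1$ to invoke orthogonality'' then means Parseval for each fixed $r_1$, which after summing over $r_1\sim R':=R\ell/(d_1d_2)$ yields at best $(R'^2+R'\mathcal L)\sum_m|A(m,d_2,d_1)|^2$ rather than the large-sieve bound $(R'^2+\mathcal L)\sum_m|A(m,d_2,d_1)|^2$. Carrying the extra factor $R'$ through \eqref{eqn:bound3}--\eqref{eqn:bound4} produces a contribution of size $T^{-2+\epsilon}R\ell^3N^2U^3$, which for $\ell=1$, $N\asymp T^2$ and $R$ near $T$ is $\asymp T^{3+\epsilon}$, a full power of $T$ too large. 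Choosing the window width $\Delta$ shorter than $r_1$ does not help either: you then win the diagonal from $x$-orthogonality but pay a factor $\mathcal L/\Delta$ from the number of relevant window pairs, and one checks that $\mathcal L$ can exceed $R'$ by as much as a power of $T$.

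What is missing is precisely the mechanism that allows the \emph{large sieve} over the pair $(r_1,x)$ to be applied, i.e.\ coefficients in the $m$-sum that are independent of $r_1$. In the paper this is Lemma~\ref{lem:SiCi}: rather than linearizing only in $m$, one Taylor-expands the full amplitude $\mathcal A\bigl((N\ell^2 m/(r_1^4 d_1 d_2^2))^{1/4}\bigr)$ around the window center, so that in each term the $r_1$-dependent factor sits outside the $m$-sum and is bounded in modulus. Only after this separation does the additive large sieve give the crucial $(R'^2+\mathcal L)$ bound. In your stationary-phase formulation the same issue appears as $\beta=\beta(r_1)$; you would need an analogous expansion to decouple it before any large-sieve or duality step can close the argument.
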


We prove Proposition \ref{prop:sm} in Section \ref{sec:sm} and Proposition \ref{prop:big} in Section \ref{sec:big}. Moreover, we note that the presence of $d_1$ and $d_2$ are conceptually unimportant and gives rise to convoluted notation which obfuscates the main ideas.  To ease the notational burden, readers may set $d_1 = d_2 = 1$ in Section \ref{sec:sm} and \ref{sec:big} below.  
 
\section{Proof of Proposition \ref{prop:sm}}\label{sec:sm}

We would like to apply the usual Large Sieve to Equation \ref{eqn:sum3}, but we first need to make the inner sum independent of $r$ and $\ell$.  To do this, we first note that by the work of Luo, Rudnick and Sarnak \cite{LRS},
\begin{equation} \label{eqn:alphajbdd}|\Re \alpha_j|  < \frac{1}{2}. 
\end{equation}

We express $F_+$ as in (\ref{def:Psipm}), use the change of variable $\frac{1-s}{2} \rightarrow s $, and derive that
\est{\label{eqn:F+2}F_+(x; \frac{r_1d_1d_2}{\ell}) 
	&= \frac{2x \pi^2}{2\pi i} \int_{(\sigma_1)}  \frac{\pi^{-8s}x^{-2s} \Gamma\pr{s- \frac{ \alpha_1}{2}}\Gamma\pr{s- \frac{ \alpha_2}{2}}\Gamma\pr{s - \frac{ \alpha_3}{2}}\Gamma\pr{s - \frac{\alpha_4}{2}}}{\Gamma\pr{\frac 12 -s +\frac{ \alpha_1}{2}}\Gamma\pr{\frac 12 - s + \frac{  \alpha_2}{2}}\Gamma\pr{\frac 12 - s +\frac{ \alpha_3}{2}}\Gamma\pr{\frac 12 - s + \frac{ \alpha_4}{2}}}\widetilde{f}(-2s + 1)  \> ds,}
so we may shift the contour of integration to $\sigma_1 = 1/4$ without crossing any poles of the integrand by \eqref{eqn:alphajbdd}.  We proceed to further shift the contour to $\sigma_1 < 1/8$.  We may pick up residues of the form
\begin{equation}\label{eqn:res}
C x^{1-2s_0} \widetilde{f}(1-2s_0),
\end{equation}
where $\tRe s_0 < 1/4$ and for some constant $C$ only dependent on $\alpha_j$.

Further note that
\begin{equation}\label{eqn:ftildebdd}
\widetilde{f}(-2s + 1) = \int_0^\infty w_3\bfrac{y}{N} e\bfrac{u\ell y}{r_1d_1d_2T} y^{-2s} dy \ll N^{1-2s}.
\end{equation}
For simplicity, we write
$$\cG(s) =  \frac{\Gamma\pr{s- \frac{ \alpha_1}{2}}\Gamma\pr{s- \frac{ \alpha_2}{2}}\Gamma\pr{s - \frac{ \alpha_3}{2}}\Gamma\pr{s - \frac{\alpha_4}{2}}}{\Gamma\pr{\frac 12 -s +\frac{ \alpha_1}{2}}\Gamma\pr{\frac 12 - s + \frac{  \alpha_2}{2}}\Gamma\pr{\frac 12 - s +\frac{ \alpha_3}{2}}\Gamma\pr{\frac 12 - s + \frac{ \alpha_4}{2}}} \widetilde{f}(-2s + 1) 
$$and note that
\begin{equation}\label{eqn:Gbdd}
\cG(s) \ll \frac{N^{1-2\tRe s}}{|s|^{1+\eps}}
\end{equation}
when $\tRe s < 1/8$ by Stirling's formula and \eqref{eqn:ftildebdd}.  This is the main motivation behind shifting the contour to $\tRe s <1/8$.  Note that repeated integration by parts on $\widetilde{f}(1-2s)$ will also given sufficiently rapid decay in $s$ but at the cost of introducing factors like $\frac{Nu}{rT}$, which can be quite large.  This leads us to the examination of the contribution of the residues and the contribution of the remaining contour integral separately.

In both cases, we apply a dyadic subdivision to the sum over $m$, so that we examine sums $m \sim M$ for $M \le \frac{R^4\ell^2}{Nd_2^2 d_1^3} T^{\epsilon_1}$.  
\subsection{Contribution of residues} \label{sec:contres}
For brevity, write $\lambda = 1-2s_0$, and note that $\tRe \lambda \ge 1/2$.  We need to bound
\es{ \label{sec8.1maineq}
&\ell^2 \frac{T^{1 + \epsilon}R^3}{N}     \int_{-T^{\epsilon}}^{T^{\epsilon}} g(u)\sum_{d_1 \ll R\ell} \sum_{d_2 \ll R\ell} \frac{1}{d_1^3d_2^2} \sum_{\substack{r_1 \sim \frac{R\ell}{d_1d_2} }}   \sumstar_{x \bmod{r_1}} \\
& \hskip 2in \left|\sum_{m \sim M} \frac{A(m, d_2, d_1) }{m} \pr{\frac{m\ell^2}{r_1^4\,d_1d_2^2}}^\lambda \widetilde f(1-2s_0)\e{\frac{m\,x}{r_1}}\right|^2  du\\
&\ll \ell^2 \frac{T^{1 + \epsilon}R^3}{N}   \sum_{d_1 \ll R\ell} \sum_{d_2 \ll R\ell} \frac{1}{d_1^3d_2^2} \bfrac{NMd_2^2d_1^3}{R^4 \ell^2}^{ 2\tRe \lambda} \frac{1}{M^{2\tRe \lambda}} \sum_{\substack{r_1 \sim \frac{R\ell}{d_1d_2} }}   \sumstar_{x \bmod{r_1}}  \left|\sum_{m \sim M} \frac{A(m, d_2, d_1 ) }{m^{1-\lambda}}  \e{\frac{m\,x}{r_1}}\right|^2,
}
using the bound $\widetilde f(1-2s_0) \ll N^{1-2\tRe s} = N^{\tRe \lambda}$ and the fact that
$$ \int_{-T^{\epsilon}}^{T^{\epsilon}} g(u) du \ll \log T.
$$
Using that $M \le \frac{R^4 \ell^2}{Nd_2^2 d_1^3} T^{\epsilon_1}$ and $\tRe \lambda \ge 1/2$, we see that
$$\bfrac{NM}{R^4 \ell^2 d_2^2 d_1^3}^{ 2\tRe \lambda} \ll \bfrac{NM}{R^4 \ell^2 d_2^2 d_1^3} T^\epsilon,
$$
upon choosing  $\epsilon_1$ to be sufficiently small. Substituting this in, we see that the quantity above is bounded by
\est{&\ll \ell^2 \frac{T^{1 + \epsilon}R^3}{N}  \sum_{d_1 \ll R\ell} \sum_{d_2 \ll R\ell} \frac{1}{d_1^3d_2^2} \bfrac{NMd_2^2d_1^3}{R^4 \ell^2} \frac{1}{M^{2\tRe \lambda}} \sum_{\substack{r_1 \sim \frac{R\ell}{d_1d_2} }}   \sumstar_{x \bmod{r_1}}  \left|\sum_{m \sim M} \frac{A(m, d_2, d_1 ) }{m^{1-\lambda}}  \e{\frac{m\,x}{r_1}}\right|^2 \\
&\ll \frac{T^{1 + \epsilon}M^{1 - 2\tRe \lambda}}{R}  \sum_{d_1 \ll R\ell} \sum_{d_2\ll R\ell }   \sum_{\substack{r_1 \sim \frac{R\ell}{d_1d_2} }} \sumstar_{x \bmod{r_1}} \left|\sum_{m \sim M} \frac{A(m, d_2, d_1 ) }{m^{1-\lambda}}  \e{\frac{m\,x}{r_1}}\right|^2}

We now apply the usual large sieve to the sum over $r_1$ and $x$ to see that the above is
\es{ \label{eqn:bddpre2}
&\ll \frac{T^{1 + \epsilon}M^{1 - 2\tRe \lambda}}{R}  \sum_{d_1 \ll R\ell} \sum_{d_2\ll R\ell }   \left( \left(\frac{ R\ell}{d_1d_2}\right)^2 + M \right) \left( \sum_{m \sim M} \left|\frac{A(m, d_2, d_1) }{m^{1-\lambda}} \right|^2\right)  \\
&\ll \frac{T^{1 + \epsilon}}{MR}  \sum_{d_1 \ll R\ell} \sum_{d_2\ll R\ell }  \left( \left(\frac{ R\ell}{d_1d_2}\right)^2 + M \right) Md_1d_2
}
upon using Lemma \ref{lem:ramanujanonaverage}. Since $R \le \frac{N}{T}$, $N\ll \frac{T^{2+\epsilon}}{\ell^2}$, and  $M \ll \frac{R^4 \ell^2}{Nd_2^2 d_1^3} T^{\epsilon_1}$, choosing $\ell_1$ sufficiently small, we conclude that Equation (\ref{sec8.1maineq}) is bounded by

\es{  \label{eqn:bdd2}
&\frac{T^{1 + \epsilon}}{R} \left((\ell R)^2 + \frac{R^4\ell^2}{N} \right) \ll \ell^2 T^{1 + \epsilon}\frac{N}{T} + \frac{T^{\epsilon} N^2 \ell^2}{T^2}   \ll T^{2+\epsilon}. 
}

\subsection{Contribution of contour}

Recall now that $\sigma_1 <1$.  We get by Cauchy-Schwarz that
\begin{align*}
&\left|\int_{(\sigma_1)} \cG(s) \sum_{m \sim M} \frac{A(m, d_2, d_1) }{m} \pr{\frac{m\ell^2}{r_1^4\,d_1d_2^2}}^{1-2s} \e{\frac{m\,x}{r_1}} ds\right|^2 \\
&\ll \int_{(\sigma_1)} |\cG(s)| \left|  \sum_{m \sim M} \frac{A(m, d_2, d_1) }{m} \e{\frac{m\,x}{r_1}} \bfrac{m\ell^2}{r_1^4d_1d_2^2}^{1-2s} \right|^2ds \int_{(\sigma_1)}  |\cG(s)| ds\\
& \ll  \bfrac{N^{1/2}\ell^2}{r_1^4 d_1d_2^2}^{2\lambda_1} \int_{-\infty}^{\infty} |\cG(\sigma_1+it)|  \left|\sum_{m \sim M} \frac{A(m, d_2, d_1) }{m^{1-\lambda_1+2it}}  \e{\frac{m\,x}{r_1}}\right|^2 dt,
\end{align*}where $\lambda_1 := 1-2\sigma_1$, and since 
$$\int_{(\sigma_1)}  |\cG(s)| ds \ll N^{\lambda_1}
$$by \eqref{eqn:Gbdd}.
In order to prove Proposition \ref{prop:sm}, it suffices to prove
\es{\label{eqn:msmbdd}
 \ell^2 \frac{T^{1 + \epsilon}R^3}{N}  &   \int_{-T^{\epsilon}}^{T^{\epsilon}}g(u) \int_{-\infty}^{\infty}|\cG(\sigma_1+it)| \sum_{d_1 \ll R\ell} \sum_{d_2\ll R\ell }  \frac{1}{d_1^3d_2^2} \bfrac{NMd_2^2d_1^3}{R^4 \ell^2}^{2\lambda_1} \frac{1}{(N^{1/2}M)^{2 \lambda_1}} \\
 &\times  \sum_{\substack{r_1 \sim \frac{R\ell}{d_1d_2} }} \sumstar_{x \bmod{r_1}}   \left|\sum_{m \sim M} \frac{A(m, d_2, d_1) }{m^{1-\lambda_1 + 2it}}  \e{\frac{m\,x}{r_1}}\right|^2 dt  du \ \ \ll T^{2+\epsilon}.
}
Again since $\frac{NMd_2^2d_1^3}{R^4 \ell^2} \ll T^{\epsilon_1}$ and $\lambda_1  = 1-2\sigma_1 > 1-1/4 > 1/2$, 
$$\bfrac{NMd_2^2d_1^3}{R^4 \ell^2}^{2\lambda_1} < \frac{NMd_2^2d_1^3}{R^4 \ell^2} T^\epsilon.
$$
Thus the left hand side of the equation above is bounded by
\est{\frac{T^{1 + \epsilon} M^{1 - 2\lambda_1}}{R N^{\lambda_1}}  &   \int_{-T^{\epsilon}}^{T^{\epsilon}}g(u) \int_{-\infty}^{\infty}|\cG(\sigma_1+it)| \sum_{d_1 \ll R\ell} \sum_{d_2\ll R\ell }   \sum_{\substack{r_1 \sim \frac{R\ell}{d_1d_2} }} \sumstar_{x \bmod{r_1}}     \left|\sum_{m \sim M} \frac{A(m, d_2, d_1) }{m^{1-\lambda_1 + 2it}}  \e{\frac{m\,x}{r_1}}\right|^2 dt  du }
Similar to arguments in Section \ref{sec:contres}, we  apply the large sieve, noting that
$$\int_{-T^{\epsilon}}^{T^{\epsilon}}g(u)du \ll \log T,
$$ and obtain that the above is bounded by

\est{
	&\ll \frac{T^{1 + \epsilon}M^{1 - 2\tRe \lambda}}{R}  \sum_{d_1 \ll R\ell} \sum_{d_2\ll R\ell }   \left( \left(\frac{ R\ell}{d_1d_2}\right)^2 + M \right) \left( \sum_{m \sim M} \left|\frac{A(m, d_2, d_1) }{m^{1-\lambda}} \right|^2\right). }
Thus by the same arguments as in Equation (\ref{eqn:bddpre2}) and (\ref{eqn:bdd2}), we derive Inequality (\ref{eqn:msmbdd}) as desired.



\section{Proof of Proposition \ref{prop:big}} \label{sec:big}
The proof of Proposition \ref{prop:big} is more complex. First of all, we apply a dyadic subdivision to the sum over $m$ and the integral over $u.$ So we investigate sums $m \sim M$ for $M \geq \frac{R^4\ell^2}{Nd_2^2d_1^3}T^{\epsilon_1}$ and $u \sim U$ where $T^{-100} < |U| \ll T^{\epsilon}$.  This suffices since there are $\ll \log^2 T$ such subdivisions and since the interval $-T^{-100} \le u \le T^{-100}$ is trivially negligible.  

From Equation (\ref{eqn:sum3}), it is sufficient to consider 
\es{\label{eqn:J} \mathcal J(R, K, M, U)  &:= \int_{u \sim U} g(u)\sum_{d_1 \ll R\ell} \sum_{d_2 \ll R\ell} \frac{1}{d_1^3d_2^2} \sum_{\substack{r_1 \sim \frac{R\ell}{d_1d_2} }}   \sumstar_{x \bmod{r_1}} \\
&\hskip 1.5in\left| \sum_{m \sim M} \frac{A(m, d_2, d_1) }{m}  F_{+}\pr{\frac{m\ell^2}{r_1^4d_1d_2^2} ; \frac{r_1d_1d_2}{\ell}} \e{\frac{m\,x}{r_1}}
\right|^2 du \\
&\leq \int_{-\infty}^{\infty} g_1\pr{\frac uU} g_2(U)  \sum_{d_1 \ll R\ell} \sum_{d_2 \ll R\ell} \frac{1}{d_1^3d_2^2} \sum_{\substack{r_1 \sim \frac{R\ell}{d_1d_2} }}   \sumstar_{x \bmod{r_1}} \\
&\hskip 1.5in\left| \sum_{m \sim M} \frac{A(m, d_2, d_1) }{m}  F_{+}\pr{\frac{m\ell^2}{r_1^4d_1d_2^2}; \frac{r_1d_1d_2}{\ell}} \e{\frac{m\,x}{r_1}}
\right|^2 du ,}
where $g_1\pr{x}$ is a smooth compactly supported function in $\left[\frac 12, \frac 52\right]$, and $g_2(U) = \min \{ \frac{1}{|U|}, \frac{R}{K}\}.$

From Lemma \ref{lem:asymforpsi} Equation (\ref{psiboundforbigx}), recalling definition of $f(x)$ in (\ref{def:fx}) and $\frac{MNd_2^2d_1^3}{R^4\ell^2} \geq T^{\epsilon_1}$, we have that

\est{F_+\pr{\frac{m\ell^2}{r_1^4\,d_1d_2^2} ; \frac{r_1d_1d_2}{\ell}} &= \frac{m\ell^2}{r_1^4\,d_1d_2^2} \int_0^\infty w_3\pr{\frac yN} \e{\frac{u\ell y}{r_1d_1d_2T}} \sum_{j = 1}^{\mathcal K} \frac{1}{\pr{\frac{m\ell^2y}{r_1^4\,d_1d_2^2}}^{\frac j4 + \frac 18}} \\
& \hskip 0.5in \times  \left[ c_j \e{4\pr{\frac{m\ell^2y}{r_1^4\,d_1d_2^2}}^{\frac 14}} W\pr{8\pi \pr{\frac{m\ell^2y}{r_1^4\,d_1d_2^2}}^{\frac 14}} \right. \\
	&\hskip 1in + \left. d_j \e{ - 4\pr{\frac{m\ell^2y}{r_1^4\,d_1d_2^2}}^{\frac 14}} \overline{W}\pr{8\pi \pr{\frac{m\ell^2y}{r_1^4\,d_1d_2^2}}^{\frac 14}} \right]  \> dy + O\pr{T^{-100}},}
where $\mathcal K$ is sufficiently large.  Without loss of generality, we consider the term $j = 1$ above, the other terms being similar and also visibly smaller. Moreover, since $U$ can be both negative and positive, we can consider the term $\e{4\pr{\frac{m\ell^2y}{r_1^4\,d_1d_2^2}}^{\frac 14}}.$


Hence pulling out factors of $d_1, d_2, r_1, \ell$, using $r_1 \sim \frac{R\ell}{d_1d_2}$ , and opening up the square in \eqref{eqn:J}, we see that we need to bound
\es{\label{eqn:Jafter} g_2(U)  \sum_{d_1 \ll R\ell} \sum_{d_2 \ll R\ell} \frac{1}{d_1^3d_2^2} \left( \frac{d_1^3d_2^2}{R^4\ell^2}\right)^{\frac 54}  \cJ_0(d_1, d_2)}
where we examine
\es{ \label{def:cJ0}
 \cJ_0(d_1, d_2) := \sum_{\substack{r_1 \sim \frac{R\ell}{d_1d_2} }}   \sumstar_{x \bmod{r_1}} \; \sum_{m_1, m_2} \frac{A(m_1, d_2, d_1) \overline{A(m_2, d_2, d_1)}}{m_1^{\frac 38}m_2^{\frac 38}} \e{\frac{m_1x - m_2x}{r_1}}  \, \fI(r_1, x, m_1, m_2; d_1, d_2),
}
and 
\est{
\fI &= \fI(r_1, x, m_1, m_2; d_1, d_2) \\
&:= \int_{-\infty}^{\infty} g_1\pr{\frac uU} \int_{0}^{\infty} \int_0^\infty  \frac{1}{y_1^{\frac 38 }y_2^{\frac 38}}w_3\pr{\frac {y_1}N} w_3\pr{\frac {y_2}N} W\pr{8\pi \pr{\frac{m_1\ell^2 y_1}{r_1^4\,d_1d_2^2}}^{\frac 14}}  W\pr{8\pi \pr{\frac{m_2\ell^2 y_2}{r_1^4\,d_1d_2^2}}^{\frac 14}} \\
& \hskip 1in \times \e{\frac{u\ell(y_1 - y_2)}{r_1d_1d_2T}} \e{\frac{4\ell^{\frac 12}}{r_1d_1^{\frac 14} d_2^{\frac 12}}((m_1y_1)^{\frac 14} - (m_2y_2)^{\frac 14})} \> dy_1 \> dy_2 \> du.
}
By the change of variable from $y_i$ to $y_i N$ and from $u$ to $uU$, and letting 
$$ w_4(y_i) =  \frac{1}{y_i^{\frac 38}} w_3\pr{y_i} \ \  \ \textrm{ and}$$
\es{ \label{def:w2} w_5(y_i, m_i) = w_5(y_i, m_i, r_1, \ell; d_1, d_2) :=  w_4\pr{y_i} W\pr{8\pi \pr{\frac{m_i\ell^2Ny_i}{r_1^4\,d_1d_2^2}}^{\frac 14}},}
$\fI$ can then be written as
\es{\label{def:fI} \fI = N^{\frac 54}U \int_{-\infty}^{\infty} g_1\pr{u} \int_{0}^{\infty} \int_0^\infty&  w_5\pr{y_1, m_1} w_5\pr{y_2, m_2}  \e{\frac{uU\ell N(y_1 - y_2)}{r_1d_1d_2T}} \\
	& \times \e{\frac{4\ell^{\frac 12}N^{\frac 14}}{r_1 d_1^{\frac 14} d_2^{\frac 12}}((m_1y_1)^{\frac 14} - (m_2y_2)^{\frac 14})} \> dy_1 \> dy_2 \> du.}
Note that the functions $w_4, w_5$ are smooth with compact support and satisfy the bound
$$w_4^{(j)} (x) \ll_j 1,  \ \ \ \ \ \ w_5^{(j)}(x, m_i) \ll_j 1.
$$ 

Let $h(y) = \frac{uU \ell N y}{r_1d_1d_2T} + \frac{4N^{\frac 14} \ell^{\frac 12}(my)^{\frac 14}}{r_1d_1^{\frac 14}d_2^{\frac 12}}. $ 
 If $\frac{|U|N}{RT} \ll T^{\epsilon_1/8}$ or $U$ is positive then the second term dominates and we have $h'(y) \gg T^{\epsilon_1/4}$ also.  Integration by parts many times with respect to $y_i$ shows that the contribution from these terms are also negligible.  So it suffices to consider when $U$ is negative and $\frac{|U|N}{RT} \gg T^{\epsilon_1/8}$. Moreover, if $M \geq \frac{N^3\ell^2U^4 }{T^4d_1^3 d_2^2} T^{\epsilon_1}$, then 
$$ |h'(y)| = \left|\frac{uU\ell N}{r_1d_1d_2T} + \frac{N^{\frac 14}\ell^{\frac 12}m^{\frac 14}}{r_1d_1^{\frac 14}d_2^{\frac 12}y^{\frac 34}}\right| \gg \frac{N|U|}{RT} T^{\epsilon_1/4} \gg T^{3\epsilon_1/8}. $$
Again we can do integration many times and derive that the contribution from these terms are negligible. Therefore we restrict our consideration to 
\begin{equation}\label{eqn:Mbdd}
M \leq \frac{N^3\ell^2U^4 }{T^4d_1^3d_2^2} T^{\epsilon_1}.
\end{equation}

Next we consider the integration over $u$ in Equation (\ref{def:fI}). Let $\Delta = y_1 - y_2$ so by the change of variable $y_1 = \Delta + y_2$, we obtain that the integration in Equation (\ref{def:fI})  becomes 
\est{ \fI = N^{\frac 54}U \int_0^\infty \int_{-y_2}^{\infty} &  w_5\pr{y_2 + \Delta, m_1} w_5\pr{y_2, m_2}  \widehat{g_1} \bfrac{-U\ell N \Delta}{r_1d_1d_2T} \\
	& \times \e{\frac{4\ell^{\frac 12}N^{\frac 14}}{r_1 d_1^{\frac 14} d_2^{\frac 12}}(m_1^{\frac 14}(y_2 + \Delta)^{\frac 14} - (m_2y_2)^{\frac 14})} \> d\Delta \> dy_2,}
where $\widehat{g_1}$ is the usual Fourier transform of $g_1$ and is Schwartz class.  Thus,
$$ \widehat{g_1} \bfrac{-U\ell N \Delta}{r_1d_1d_2T} \ll T^{-100}$$ whenever 
$$ |\Delta| \ge T^{\epsilon} \frac{RT}{|U|N} $$
for any $\epsilon>0$.  We thus assume that
\begin{equation}\label{eqn:Deltabdd}
|\Delta| \le T^{\epsilon} \frac{RT}{|U|N} \le T^{-\epsilon_1/8 + \epsilon}.
\end{equation}

Applying the Taylor expansion for $(y_2 + \Delta)^{\frac 14}$, we have that 
$$ (y_2 + \Delta)^{\frac 14} = y_2^{\frac 14}\left(1 + \mathcal P(\Delta, y_2) \right)$$
where $\mathcal P(\Delta, y_2) = \sum_{j = 1}^\infty c_j \left( \frac{\Delta}{y_2}\right)^j.$  Also let $\mathcal K = \mathcal K(R, T, N, U)$ be an interval such that
\es{\label{def:K} \mathcal K = \left\{  \Delta \geq -y_2 :  \, |\Delta| \le T^{\epsilon} \frac{RT}{|U|N} \right\}}
 Thus
\es{\label{eqn:fI2} \fI = N^{\frac 54}U  \int_0^\infty \int_{\mathcal K} &  w_5\pr{y_2 + \Delta, m_1} w_5\pr{y_2, m_2}   \widehat{g_1} \bfrac{-U\ell N \Delta}{r_1d_1d_2T} \\
	& \times \e{\frac{4N^{\frac 14}\ell^{\frac 12}y_2^{\frac 14}}{r_1d_1^{\frac 14}d_2^\frac{1}{2}}(m_1^{\frac 14} - m_2^{\frac 14}) } \e{ \frac{4(N\ell^2y_2m_1)^{\frac 14}}{r_1d_1^{\frac 14}d_2^\frac{1}{2}} \mathcal P(\Delta, y_2)} \> d\Delta \> dy_2.}

Since $\Delta \ll T^{-3\epsilon_1/32 }$ (picking $\epsilon = \epsilon_1/32$ in \eqref{eqn:Deltabdd}) and $\frac{|U\Delta|N}{RT} \ll T^{\epsilon_1/32}$, 
\begin{equation}\label{eqn:PDeltabdd}
\frac{4(N\ell^2y_2m_1)^{\frac 14}}{r_1d_1^{\frac 14}d_2^\frac{1}{2}} \mathcal P(\Delta, y_2)  \ll \frac{|U|NT^{\epsilon_1/4}}{RT}\mathcal P(\Delta, y_2) \ll T^{9\epsilon_1/32}.
\end{equation}

If $\frac{4N^{\frac 14}\ell^{\frac 12}y_2^{\frac 14}}{r_1d_1^{\frac 14}d_2^\frac{1}{2}}|m_1^{\frac 14} - m_2^{\frac 14}|  \gg T^{5\epsilon_1/16} $, then we can do integration with respect to $y_2$ many times and obtain that the contribution of these terms is negligible. Therefore we consider when
\begin{equation*}
|m_1^{\frac 14} - m_2^{\frac 14}|  \ll T^{5\epsilon_1/16} \frac{r_1d_1^{\frac 14}d_2^{\frac 12}}{N^{\frac 14} \ell^{\frac 12}} \asymp T^{5\epsilon_1/16} \cR M^{1/4},
\end{equation*}where
\begin{equation}\label{eqn:cRdef}
\cR = \frac{R\ell^{\frac 12}}{N^{\frac 14}M^{\frac 14}d_1^{\frac 34} d_2^{\frac 12}} \ll T^{-\epsilon_1/4},
\end{equation} upon recalling that we are working in the range $M \ge \frac{R^4\ell^2}{Nd_1^3d_2^2}T^{\epsilon_1}$.  Thus
\es{\label{boundfordiffm} |m_1 - m_2| \ll T^{5\epsilon_1/16} \cR M := \mathcal L,}and \eqref{eqn:PDeltabdd} becomes
\begin{equation}\label{eqn:PDeltabdd2}
\mathcal P(\Delta, y_2) \ll \cR T^{9\epsilon_1/32}.
\end{equation}

Next, we divide the range for $m_1, m_2$ into intervals $\mathcal C_{\eta_1}$ and $\mathcal C_{\eta_2}$ of length $T^{-\epsilon_1}\mathcal L$, where $\eta_1$ and $\eta_2$ are the left endpoints of the intervals $\mathcal C_{\eta_1}$ and $\mathcal C_{\eta_2}$ respectively. When $m_i \in \mathcal C_{\eta_i}$ for some $\eta_i$ and Equation (\ref{boundfordiffm}) holds,  the restriction of the length of the intervals implies that $|\eta_1 - \eta_2| \ll \mathcal L.  $ Hence for fixed $\mathcal C_{\eta_1}$, there are $O(T^{\epsilon_1})$ choices for $\mathcal C_{\eta_2}$, and so there are $O\left(\frac{1}{\cR} T^{27\epsilon_1/16} \right)$ relevant pairs of intervals $(\mathcal C_{\eta_1}, \mathcal C_{\eta_2})$ with end points satisfying Equation (\ref{boundfordiffm}).  We let $\sumt_{(\mathcal C_{\eta_1}, \mathcal C_{\eta_2})}$ denote the sum over such pairs.

From Equations (\ref{def:cJ0}), (\ref{def:w2}), (\ref{eqn:fI2}), and the trivial bound $\widehat{g_1}\pr{\frac{-U\ell N\Delta}{r_1d_1d_2T}} \ll 1$, we have that 
\es{\label{eqn1:boundcJ0}\cJ_0 \ll g_2(U) U N^{\frac 54} \int_0^\infty \int_{\mathcal K} & | w_4\pr{y_2 + \Delta} w_4\pr{y_2} | \ \sumt_{(\mathcal C_{\eta_1}, \mathcal C_{\eta_2})}   \mathcal S(\mathcal C_{\eta_1}, \mathcal C_{\eta_2}) d\Delta dy_2 , }
where
\est{S(\mathcal C_{\eta_1}, \mathcal C_{\eta_2})  := \sum_{r_1 \sim \frac{R\ell}{d_1d_2}} \sumstar_{x \bmod r_1}  |\mathcal S_1(C_{ \eta_1}) \mathcal S_2(C_{\eta_2})| ;}
\est{\mathcal S_1(\mathcal C_{\eta_1}) &:= \sum_{m_1 \in \mathcal C_{\eta_1}} \frac{A(m_1, d_2, d_1) }{m_1^{\frac 38}} W\pr{8\pi \pr{\frac{m_1N\ell^2(y_2 + \Delta)}{r_1^4\,d_1d_2^2}}^{\frac 14}}\e{\frac{4N^{\frac 14}\ell^{\frac 12}y_2^{\frac 14} m_1^{\frac 14}}{r_1d_1^{\frac 14}d_2^\frac{1}{2} }} \\
	&\hskip 3in \times \e{ \frac{4(N\ell^2y_2m_1)^{\frac 14}}{r_1d_1^{\frac 14}d_2^\frac{1}{2}} \mathcal P(\Delta, y_2)} \e{\frac{m_1x}{r_1}};}
and 

\est{\mathcal S_2(\mathcal C_{\eta_1}) &:= \sum_{m_2 \in \mathcal C_{\eta_2}} \frac{\overline{A(m_2, d_2, d_1)} }{m_2^{\frac 38}} W\pr{8\pi \pr{\frac{m_2N\ell^2y_2 }{r_1^4\,d_1d_2^2}}^{\frac 14}}\e{-\frac{4N^{\frac 14}\ell^{\frac 12}y_2^{\frac 14} m_2^{\frac 14}}{r_1d_1^{\frac 14}d_2^\frac{1}{2} }} \e{-\frac{m_2x}{r_2}}.}

By the inequality $2|ab| \leq |a|^2 + |b|^2$. We have that 
\begin{equation}\label{eqn:bound2}
\left| S(\mathcal C_{\eta_1}, \mathcal C_{\eta_2}) \right| \leq \frac{1}{2}  \sum_{r \sim \frac{R\ell}{d_1d_2}} \sumstar_{x \bmod r_1}  |\mathcal S_1(C_{ \eta_1})|^2  + \frac 12 \sum_{r \sim \frac{R\ell}{d_1d_2}} \sumstar_{x \bmod r_1} |\mathcal S_2(C_{\eta_2})|^2. 
\end{equation}

To bound $\cJ_0$ in Equation (\ref{eqn1:boundcJ0}), we first prove the following Lemma.
\begin{lem} \label{lem:SiCi} Let $\mathcal S_i(C_{\eta_i})$ for $i = 1, 2$ be defined as above. We have
	$$ \sum_{r_1 \sim \frac{R\ell}{d_1d_2}} \sumstar_{x \bmod r_1}  |\mathcal S_i(C_{ \eta_i})|^2 \ll M^{-3/4} \left( \left( \frac{R\ell}{d_1d_2}\right)^2 + \mathcal L \right)  \fS(\eta_i),$$ where
\begin{equation}\label{eqn:fsdef}
\fS(\eta_i) := \sum_{m \in \mathcal C_{\eta_i}} |A(m, d_2, d_1)|^2.
\end{equation}

\end{lem}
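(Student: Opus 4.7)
The plan is to apply the standard arithmetic large sieve
$$\sum_{r \sim Q} \sumstar_{x \bmod r} \Big|\sum_m c_m \e{mx/r}\Big|^2 \ll (Q^2 + L)\sum_m |c_m|^2,$$
with $Q := R\ell/(d_1d_2)$ and $L := |\mathcal C_{\eta_i}| = T^{-\epsilon_1}\mathcal L$, to the sum defining $\mathcal S_i(\mathcal C_{\eta_i})$. The obstruction is that the coefficient of $A(m,d_2,d_1)\e{\pm mx/r_1}$ is an $r_1$-dependent weight
$$\psi_i(m,r_1) \;=\; m^{-3/8}\, W\!\Big(8\pi\Big(\tfrac{m N\ell^2 y_i}{r_1^4 d_1 d_2^2}\Big)^{\!1/4}\Big)\, \e{\tfrac{\gamma_i m^{1/4}}{r_1}},$$
where $\gamma_i \asymp N^{1/4}\ell^{1/2}/(d_1^{1/4}d_2^{1/2})$ is independent of $m$ and $r_1$ (for $\mathcal S_1$ one additionally picks up the factor $1+\mathcal P(\Delta,y_2)$). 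To use the large sieve I must decouple the $m$ and $r_1$ dependence of $\psi_i$.

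The key observation is that on $\mathcal C_{\eta_i}$, of length $|\mathcal C_{\eta_i}| = T^{-\epsilon_1}\mathcal L$, the weight $\psi_i$ is essentially constant in $m$. By \eqref{eqn:cRdef}, $\gamma_i m^{1/4}/r_1 \asymp 1/\cR$ for $m \sim M$ and $r_1 \sim Q$, so each differentiation in $m$ produces a factor of order $\gamma_i/(4 m^{3/4} r_1) \asymp 1/(\cR M)$ from the exponential phase (this dominates the contributions from $W'$ and from $m^{-3/8}$, since $W$ has bounded $L^\infty$ norm and its derivatives are, in our range, even smaller). Inductively,
$$|\partial_m^k \psi_i(m,r_1)| \;\ll\; \frac{M^{-3/8}}{(\cR M)^k}$$
uniformly for $m \sim M$, $r_1 \sim Q$. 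Since $\mathcal L = T^{5\epsilon_1/16}\cR M$, this yields $|\mathcal C_{\eta_i}|^k |\partial_m^k \psi_i| \ll T^{-11 k\epsilon_1/16} M^{-3/8}$, so successive Taylor coefficients of $\psi_i$ at $\eta_i$ decay geometrically.

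I would therefore Taylor expand $\psi_i(m,r_1)$ about $m = \eta_i$ to a fixed finite order $K = K(\epsilon_1)$:
$$\psi_i(m,r_1) \;=\; \sum_{k=0}^{K-1} \frac{(m-\eta_i)^k}{k!}\,\partial_m^k \psi_i(\eta_i,r_1) \;+\; R_K(m,r_1).$$
Substituting into $\mathcal S_i$, the $k$-th term factors cleanly as an $r_1$-dependent scalar $\partial_m^k \psi_i(\eta_i,r_1)/k!$ times a sum $\sum_{m\in\mathcal C_{\eta_i}}(m-\eta_i)^k A(m,d_2,d_1)\e{\pm mx/r_1}$ whose coefficients are now independent of $r_1$. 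Applying the large sieve and the derivative bound gives
$$\sum_{r_1 \sim Q}\sumstar_{x \bmod r_1} |\text{$k$-th term}|^2 \;\ll\; \frac{M^{-3/4}}{(\cR M)^{2k}}(Q^2 + \mathcal L)(T^{-\epsilon_1}\mathcal L)^{2k}\,\fS(\eta_i) \;=\; T^{-11k\epsilon_1/8}\,M^{-3/4}(Q^2 + \mathcal L)\fS(\eta_i),$$
and summing the geometric series over $k < K$ recovers the target bound.

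Finally, the Taylor remainder satisfies $|R_K(m,r_1)| \ll T^{-11K\epsilon_1/16}\, M^{-3/8}$ on $\mathcal C_{\eta_i}$. A crude Cauchy--Schwarz argument bounds its contribution by $\sum_{r_1,x}|\text{remainder}|^2 \ll Q^2\, T^{-11K\epsilon_1/8}\mathcal L\, M^{-3/4}\fS(\eta_i)$, which is absorbed into the main bound once $K$ is chosen large enough that $T^{-11K\epsilon_1/8}\mathcal L \ll 1$ (possible since $\mathcal L \le T^{O(1)}$). The same argument handles $\mathcal S_2$ verbatim, up to sign changes in the phases. The principal technical obstacle is the bookkeeping for the derivative estimate on $\psi_i$: one must verify carefully that $W'$ never beats the exponential contribution in the relevant range of arguments, so that the natural $m$-scale is indeed $\cR M$, exactly matching (up to the harmless $T^{5\epsilon_1/16}$) the length $\mathcal L$ of the dissection.
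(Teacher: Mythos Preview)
Your proof is correct and follows essentially the same approach as the paper: both Taylor-expand the $r_1$-dependent weight on the short interval $\mathcal C_{\eta_i}$ to separate the $m$- and $r_1$-dependence, and then apply the classical large sieve. The only cosmetic difference is the choice of expansion variable: you expand $\psi_i(m,r_1)$ directly in $m$ about $\eta_i$, whereas the paper introduces $\mathcal A(x)$ with $x=(N\ell^2 m/(r_1^4 d_1 d_2^2))^{1/4}$ and expands about $x_0=(N\ell^2\eta_1/(r_1^4 d_1 d_2^2))^{1/4}$, noting that $x-x_0$ factors as an $r_1$-only part times $(m_1^{1/4}-\eta_1^{1/4})$. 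Either way the controlling small parameter is the same (essentially $|\mathcal C_{\eta_i}|/(\cR M)\asymp T^{-11\epsilon_1/16}$), the geometric decay of successive terms is identical, and the resulting large-sieve input matches.
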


\begin{proof}
	We will bound $\sum_{r_1 \sim \frac{R\ell}{d_1d_2}} \sumstar_{x \bmod r_1}  |\mathcal S_1(C_{ \eta_1})|^2$  since the proof for $\mathcal S_2$ is similar and somewhat simpler due to the absence of dependence on $\Delta$. 
	
	Define 
	$$ \mathcal A(x) =   W\pr{8\pi\pr{y_2+\Delta }^{\frac 14} x}\e{4y_2^{\frac 14}x}\e{-4\mathcal P(\Delta, y_2)y_2^{\frac 14} x}.$$
	We will take a Taylor expansion of $\mathcal A\pr{\pr{\frac{N\ell^2m_1}{r_1^4 \,d_1d_2^2}}^{\frac 14}}$ around 
	\begin{equation}\label{eqn:xdef}
	x = \pr{\frac{N \ell^2\eta_1}{r_1^4 \,d_1d_2^2}}^{\frac 14} \asymp \frac{1}{\cR}
	\end{equation} to separate variables $r_1$ and $m_1$ before applying the Large Sieve. We write
	\est{ \mathcal A\pr{\pr{\frac{N\ell^2m_1}{r_1^4 \,d_1d_2^2}}^{\frac 14}} = \sum_{\alpha = 0}^\infty  \frac{1}{\alpha!} \mathcal A^{(\alpha)}\pr{\pr{\frac{N \ell^2\eta_1}{r_1^4 \,d_1d_2^2}}^{\frac 14}}\, \pr{\pr{\frac{N \ell^2 m_1}{r_1^4 \,d_1d_2^2}}^{\frac 14} -\pr{\frac{N \ell^2\eta_1}{r_1^4 \,d_1d_2^2}}^{\frac 14}  }^{\alpha}. }
	
By Equation (\ref{boundfordiffm}), $m_1, \eta_1 \sim M$ and the Mean Value Theorem, we obtain that 
\begin{equation}\label{eqn:metabdd}
 |m_1^{\frac 14} - \eta_1^{\frac 14}| \leq \frac{|m_1 - \eta_1|}{M^{\frac 34}} \ll \frac{\cR M T^{\frac{5\epsilon_1}{16}}}{M^{\frac 34}T^{\epsilon_1}} \ll \cR M^{1/4} T^{-\frac{11\epsilon_1}{16}} .
\end{equation}
Moreover by the property of $ W(x)$ (e.g. see \cite{Watt} p.206), Equations \eqref{eqn:xdef} and \eqref{eqn:PDeltabdd2}, and $y_2 \ll 1$, 
$$ \mathcal A^{(\alpha)}(x) \ll  \alpha! c^\alpha x^{-\alpha} + \left(\cR T^{9\epsilon_1/32}\right)^\alpha \ll \alpha! c^\alpha \left(\cR T^{9\epsilon_1/32}\right)^{\alpha}$$
where $c$ is some absolute constant. Thus
$$ \frac{1}{\alpha!} \mathcal A^{(\alpha)}\pr{\pr{\frac{N\ell^2\eta_1}{r_1^4 \,d_1d_2^2}}^{\frac 14}}\, \pr{\pr{\frac{N \ell^2m_1}{r_1^4 \,d_1d_2^2}}^{\frac 14} -\pr{\frac{N \ell^2\eta_1}{r_1^4 \,d_1d_2^2}}^{\frac 14}  }^{\alpha} \ll \pr{ \cR \frac{c}{T^{\frac{13\epsilon_1}{32}}}}^\alpha.$$
Recalling \eqref{eqn:cRdef}, we can choose $B$ such that  $\pr{\cR \frac{c}{T^{\frac{13 \epsilon_1}{32}}}}^\alpha \ll T^{-100}$ and obtain that
$$ \mathcal A\pr{\pr{\frac{N \ell^2m_1}{r_1^4 \,d_1d_2^2}}^{\frac 14}} = \sum_{0 \leq \alpha \leq B}  \frac{1}{\alpha!} \mathcal A^{(\alpha)}\pr{\pr{ \frac{N \ell^2\eta_1}{r_1^4 \,d_1d_2^2}}^{\frac 14}}\, \pr{\pr{\frac{N \ell^2m_1}{r_1^4 \,d_1d_2^2}}^{\frac 14} -\pr{\frac{N \ell^2\eta_1}{r_1^4 \,d_1d_2^2}}^{\frac 14}  }^{\alpha} + O(T^{-100}).$$
Hence to bound $\sum_{r_1 \sim \frac{R\ell}{d_1d_2}} \sumstar_{x \bmod r_1}  |\mathcal S_1(C_{ \eta_1})|^2$, it is enough bound $\mathcal S_1(C_{\eta_1}; \alpha)$ for fixed $\alpha$, which is defined to be
\es{\label{eqn:bound1} 
&\sum_{r_1 \sim \frac{R\ell}{d_1d_2}} \sumstar_{x \bmod r_1} \left| \sum_{m_1 \in \mathcal C_{\eta_1}} \frac{\overline{A(m_1, d_2, d_1)} }{m_1^{\frac 38}} \frac{1}{\alpha!} \mathcal A^{(\alpha)}\pr{\pr{ \frac{N \ell^2\eta_1}{r_1^4 \,d_1d_2^2}}^{\frac 14}}\,\right. \\
& \hskip 2in \times \left. \pr{\pr{\frac{N \ell^2m_1}{r_1^4 \,d_1d_2^2}}^{\frac 14} -\pr{\frac{N \ell^2\eta_1}{r_1^4 \,d_1d_2^2}}^{\frac 14}  }^{\alpha}  \e{-\frac{m_1x}{r_1}} \right|^2 \\
&\ll c^{\alpha} \bfrac{N^{1/2}d_1^{3/2}d_2}{R^2 \ell}^\alpha \left(\cR T^{9\epsilon_1/32}\right)^{2\alpha}\sum_{r \sim \frac{R\ell}{d_1d_2}} \sumstar_{x \bmod r_1} \left| \sum_{m_1 \in \mathcal C_{\eta_1}} \frac{\overline{A(m_1, d_2, d_1)} }{m_1^{\frac 38}} \, \pr{m_1^{\frac 14} -\eta_1^{\frac 14}}^{\alpha} \e{-\frac{m_1x}{r_1}} \right|^2  \\
&\ll c^{\alpha} \bfrac{N^{1/2} d_1^{3/2}d_2\cR^2 T^{9\epsilon_1/16}}{R^2 \ell}^{\alpha} \left( \left(\frac{R\ell}{d_1d_2}\right)^2 + \mathcal L \right)\left(\sum_{m \in \mathcal C_{\eta_1}} \frac{|A(m, d_2, d_1)|^2 }{m^{\frac 34}} \left|m^{\frac 14} -\eta_1^{\frac 14}\right|^{2 \alpha}\right)  \\
&= M^{-3/4} \bfrac{c N^{1/2}d_1^{\frac 32} d_2 M^{1/2} \cR^2 \cR^2 }{R^2 \ell T^{13\epsilon_1/16}}^{\alpha} \left( \left(\frac{R\ell}{d_1d_2}\right)^2 + \mathcal L \right)  \fS(\eta_1)
}
by \eqref{eqn:metabdd} and where $\fS$ is as defined in \eqref{eqn:fsdef}.  Using that $\cR^2 \asymp \frac{R^2 \ell}{(MN)^{1/2}d_1^{\frac 32}d_2}$ and $\cR \ll T^{-\epsilon_1/4}$ (see Equation \ref{eqn:cRdef}), we obtain that the quantity in \eqref{eqn:bound1} is maximized when $\alpha = 0$ for sufficiently large $T$, so that 
\begin{equation}
\mathcal S_1(C_{\eta_1}; \alpha)  \ll M^{-3/4} \left( \left(\frac{R\ell}{d_1d_2}\right)^2 + \mathcal L \right)  \fS(\eta_1),
\end{equation}for all $\alpha$.

\end{proof}
By Lemma \ref{lem:SiCi} and \eqref{eqn:bound2},
\begin{align*}
\sumt_{(\mathcal C_{\eta_1}, \mathcal C_{\eta_2})}   \mathcal S(\mathcal C_{\eta_1}, \mathcal C_{\eta_2})
&\ll  M^{-3/4} \left( \left(\frac{R\ell}{d_1d_2}\right)^2 + \mathcal L \right)  \sumt_{(\mathcal C_{\eta_1}, \mathcal C_{\eta_2})} (\fS(\eta_1) + \fS(\eta_2))\\
&\ll M^{-3/4} \left( \left(\frac{R\ell}{d_1d_2}\right)^2 + \mathcal L \right) T^{\epsilon_1} \sum_{m\sim M} |A(m, d_2, d_1)|^2,
\end{align*}since for each $\eta_1$ there are $\ll T^{\epsilon_1}$ choices for $\eta_2$ and vice versa.  In the sequel, we shall simplify notation slightly and write our bounds in terms of $\epsilon>0$ which can be made arbitrarily small by taking $\epsilon_1$ sufficiently small.  Using the bound in Lemma, which is \ref{lem:ramanujanonaverage}
$$\sum_{m\sim M} |A(m, d_2, d_1)|^2\ll (Md_1d_2)^{1+\epsilon} \ll M d_1d_2T^{\epsilon},
$$
and $\mathcal L = \mathcal R M T^{5\epsilon_1/16}$,
we see that the above is
$$ \sumt_{(\mathcal C_{\eta_1}, \mathcal C_{\eta_2})}   \mathcal S(\mathcal C_{\eta_1}, \mathcal C_{\eta_2}) \ll M^{1/4} d_1d_2 T^{\epsilon} \left( (R\ell)^2 + \cR M \right).
$$
It follows from \eqref{eqn1:boundcJ0} and \eqref{eqn:Deltabdd} that
\begin{align*}
\cJ_0 \ll M^{1/4}g_2(U) U N^{\frac 54}d_1d_2 T^{\epsilon} \frac{RT}{|U|N}  \left( \left(\frac{R\ell}{d_1d_2}\right)^2 + \cR M \right),
\end{align*}and using that $g_2(U) U \ll 1$ for all $U$, and referring to Equations \eqref{eqn:sum3} and (\ref{eqn:Jafter}), it now suffices to bound
\es{\label{eqn:bound3}
&\ell^2 \frac{T^{1 + \epsilon}R^3}{N}  N^{\frac 54} \frac{RT}{|U|N} \sum_{d_1 \ll R\ell} \sum_{d_2 \ll R\ell} \frac{d_1d_2}{d_1^3d_2^2} \left( \frac{d_1^3d_2^2}{R^4\ell^2}\right)^{\frac 54} \left( \left(\frac{R\ell}{d_1d_2}\right)^2 + \cR M \right) M^{\frac 14} \\
&\ll \frac{T^{2 + \epsilon}}{RN^{3/4}\ell^{1/2}|U|}   \sum_{d_1 \ll R\ell} \sum_{d_2 \ll R\ell} d_1d_2(d_1^3d_2^2)^{\frac 14}  \left( \left(\frac{R\ell}{d_1d_2}\right)^2 + \frac{R\ell^{\frac 12}}{N^{\frac 14}M^{\frac 14} d_1^{\frac 34}d_2^{\frac 12}} M \right)M^{\frac 14}  \\
&\ll \frac{T^{2 + \epsilon}}{RN^{3/4}\ell^{1/2}|U|}  \sum_{d_1 \ll R\ell} \sum_{d_2 \ll R\ell}   \left( \frac{M^{1/4}}{d_1^{\frac 14} d_2^{\frac 12}} (R\ell)^2 + \frac{R\ell^{1/2} d_1d_2}{N^{1/4}} M \right),
}
by \eqref{eqn:cRdef}.  Using that $M \ll \frac{N^{3}\ell^2U^4}{T^4d_1^3d_2^2} T^{\epsilon_1}$ from \eqref{eqn:Mbdd} and summing over $d_1, d_2$, we obtain that the quantity in \eqref{eqn:bound3} is 
\begin{align}\label{eqn:bound4}
&\ll \frac{T^{2 + \epsilon}}{RN^{3/4}\ell^{1/2}|U|} \left( R^2\ell^{5/2} \frac{N^{3/4}|U|}{T} + \frac{RU^4 N^{3}\ell^{5/2}}{N^{1/4}T^4}\right)
\ll T^{1 + \epsilon}\ell^{2} R + T^{\epsilon} \frac{N^2\ell^2}{T^2},
\end{align}where we have used that $|U| \ll T^\epsilon$.  Now using that $R \ll \frac{N}{T}$ and $N\ll \frac{T^{2+\epsilon}}{\ell^2}$, we see that \eqref{eqn:bound4} is $\ll T^{2+\epsilon}$ as desired.

\appendix
\section{Proof of Lemma \ref{lem:asymforpsi}} \label{sec:proofofasympPsi}
The proof requires properties of $J$-Bessel function as the following.
\begin{lem} \label{lem:Besselresult}
	For any integer $k \geq 0,$ 
	\es{\label{asympJxbig} J_{k} (2\pi x) = \frac{1}{2\pi\sqrt x}\pg{ W_k(2\pi x)\e{x - \frac k4 - \frac 18}  +  \overline{W_k}(2\pi x)\e{-x + \frac k4 + \frac 18}},}
	where $W_k^{(j)}(x) \ll_{j, k} x^{-j} $.  Moreover, 
	\es{\label{asympJxSm} J_{k} (2 x) = \sum_{\ell = 0}^{\infty} (-1)^{\ell} \frac{x^{2\ell + k }}{\ell! (\ell + k )!}.}
	
\end{lem}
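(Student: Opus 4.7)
\medskip

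The plan is to deduce both identities directly from classical Bessel function theory. The power series identity \eqref{asympJxSm} is essentially immediate: the standard Frobenius series
$$J_k(z) = \sum_{\ell \ge 0} \frac{(-1)^\ell}{\ell!(\ell+k)!}\left(\frac{z}{2}\right)^{2\ell+k}$$
(see e.g.\ Watson \cite{Watt}, Chapter III) gives \eqref{asympJxSm} upon substituting $z = 2x$ so that $(z/2)^{2\ell+k} = x^{2\ell+k}$. There is nothing further to prove for that part.

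For \eqref{asympJxbig}, the plan is to start from the classical Hankel-type asymptotic expansion of $J_k$ for large positive argument (Watson \cite{Watt}, \S7.21 / p.\ 206), which may be written as
$$J_k(z) = \sqrt{\frac{2}{\pi z}}\bigl[P_k(z)\cos\chi - Q_k(z)\sin\chi\bigr], \qquad \chi = z - \frac{k\pi}{2} - \frac{\pi}{4},$$
where $P_k$ and $Q_k$ are the two standard asymptotic series in inverse powers of $z$, real-valued on the positive real axis, with $P_k(z) \sim 1$ and $Q_k(z) \sim (4k^2-1)/(8z)$ as $z \to \infty$, and with derivatives satisfying $P_k^{(j)}(z), Q_k^{(j)}(z) \ll_{j,k} z^{-j}$ for $z$ sufficiently large. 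I would then substitute $z = 2\pi x$ so that the prefactor $\sqrt{2/(\pi z)}$ becomes $1/(\pi \sqrt{x})$, and convert the trigonometric functions to exponentials via the identity $e^{i\theta} = \e{\theta/(2\pi)}$, noting that $e^{i\chi} = \e{x - k/4 - 1/8}$.

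Defining $W_k(2\pi x) := P_k(2\pi x) + i Q_k(2\pi x)$, so that $\overline{W_k}(2\pi x) = P_k(2\pi x) - i Q_k(2\pi x)$ since $P_k, Q_k$ are real-valued, a direct computation gives
$$P_k \cos\chi - Q_k \sin\chi = \frac{1}{2}W_k \, e^{i\chi} + \frac{1}{2}\overline{W_k} \, e^{-i\chi},$$
which, combined with the prefactor, yields \eqref{asympJxbig}. The derivative estimate $W_k^{(j)}(x) \ll_{j,k} x^{-j}$ then inherits directly from the analogous bounds on $P_k^{(j)}$ and $Q_k^{(j)}$. The only mild technical point is ensuring the asymptotic expansion is truncated at a sufficiently high order to absorb error terms into the implicit constants, but this is standard and poses no real obstacle since the result is purely local analysis of a classical special function.
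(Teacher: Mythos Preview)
Your proposal is correct and aligns with the paper's treatment: the paper simply states ``These results are standard, e.g.\ see \cite{Watt} for the proof'' without giving any argument, so your explicit derivation from Watson's Frobenius series and Hankel asymptotic expansion is more detailed than what the paper provides but rests on exactly the same source.
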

These results are standard, e.g. see \cite{Watt} for the proof.
\begin{proof}[Proof of Lemma \ref{lem:asymforpsi}]
	We start with evaluating the asymptotic formula for $\Psi_{+}$. From the definition of $\Psi_+$ in Equation (\ref{def:Psipm}), for $x > 0$  
	\est{\label{eqn:F+}\Psi_+(x) &= \frac{1}{2\pi i} \int_{(-\sigma)}  \frac{\pi^{4s - 2} \Gamma\pr{\frac{1-s - \alpha_1}{2}}\Gamma\pr{\frac{1-s - \alpha_2}{2}}\Gamma\pr{\frac{1-s - \alpha_3}{2}}\Gamma\pr{\frac{1-s - \alpha_4}{2}}}{\Gamma\pr{\frac{s + \alpha_1}{2}}\Gamma\pr{\frac{s + \alpha_2}{2}}\Gamma\pr{\frac{s + \alpha_3}{2}}\Gamma\pr{\frac{s + \alpha_4}{2}}}\widetilde{\psi}(s) x^{s} \> ds \\
		&= \frac{2x \pi^2}{2\pi i} \int_{(\sigma_1)}  \frac{\pi^{-8s}x^{-2s} \Gamma\pr{s- \frac{ \alpha_1}{2}}\Gamma\pr{s- \frac{ \alpha_2}{2}}\Gamma\pr{s - \frac{ \alpha_3}{2}}\Gamma\pr{s-\frac{\alpha_4}{2}}}{\Gamma\pr{\frac 12 -s +\frac{ \alpha_1}{2}}\Gamma\pr{\frac 12 - s + \frac{  \alpha_2}{2}}\Gamma\pr{\frac 12 - s +\frac{ \alpha_3}{2}}\Gamma\pr{\frac 12 - s + \frac{ \alpha_4}{2}}}\widetilde{\psi}(-2s + 1)  \> ds,}
	where $\sigma_1 = \frac{1 + \sigma}{2} > \frac{15}{68}$ \footnote{This is due to the bound for $\lambda_j$ in the work of Luo, Rudnick and Sarnak \cite{LRS}}. 
	Let 
	$$ H(s) = 4^{8s - 2} \frac{\prod_{j = 1}^4\Gamma\pr{s - \frac{\alpha_j}{2}} \Gamma\pr{\frac 12 - 4s}}{\prod_{j = 1}^4 \Gamma\pr{\frac 12 - s + \frac{\alpha_j}{2}} \Gamma\pr{4s - \frac 32}} - 1.$$
	From the Stirling's formula (e.g. \cite{Li}) below
	\est{\log \Gamma(s + c) = \pr{s + c  - \frac 12} \log s - s + \frac 12 \log 2\pi + \sum_{j = 1}^{\mathcal K} \frac{a_j}{s^j} + O_\delta \pr{\frac{1}{|s|^{K +1}}},}
	where $c$ is a constant, $a_j$ are suitable constants, $K$ is a fixed positive integer, $|\arg(s)| \leq \pi - \delta$, $\delta > 0$, excluding the points $s = 0$ and the neighborhoods of the poles of $\Gamma(s + c)$, it can be shown that 
	\es{\label{expansionH}H(s) = \sum_{j = 1}^{\mathcal K} \frac{b_j}{s^j} +  O \pr{\frac{1}{|s|^{\mathcal K +1}}},}
	where $b_j$ are appropriate constants, depending on $\alpha_i.$ 
	
	By the definition of $\Psi_+(x)$, we have that 
	\est{\Psi_+(x)  
		&= \frac{2x \pi^2}{2\pi i} \int_{(\sigma_1)}  \frac{\pi^{-8s}4^{2 - 8s}x^{-2s} \Gamma\pr{4s -  \frac{ 3}{2}}}{\Gamma\pr{\frac 12 -4s}}\widetilde{\psi}(-2s + 1)  \> ds  \\
		& \hskip 0.7in + \frac{2x \pi^2}{2\pi i} \int_{(\sigma_1)} \frac{\pi^{-8s}4^{2 - 8s}x^{-2s} \Gamma\pr{4s -  \frac{ 3}{2}}}{\Gamma\pr{\frac 12 -4s}}H(s)\widetilde{\psi}(-2s + 1)  \> ds \\
		& =: I_1 + I_2. }
	Firstly, let us consider $I_1$. By changing  variables $4s - 3/2 $ to $w$, we obtain that 
	\est{I_1 &= \frac{x \pi^2}{4\pi i} \int_{(\sigma_2)}  \frac{\pi^{-2w - 3}4^{-2w - 1}x^{-\frac w2 - \frac 34} \Gamma\pr{w}}{\Gamma\pr{-1 - w}}\widetilde{\psi}\pr{-\frac w2 + \frac 14}  \> dw \\
		&= \frac{x} {16\pi^2 i x^{\frac 34}} \int_{(\sigma_2)}  \frac{ \Gamma\pr{w}}{\Gamma\pr{-1 - w}} (4\pi)^{-2w}x^{-\frac w2 } \widetilde{\psi}\pr{-\frac w2 + \frac 14}  \> dw	, }
	where $\sigma_2 = 4\sigma_1 - \frac 32 > - \frac{21}{34}$. We can choose $\sigma_2 > 0$.  We move countour integration to the left to Re$(s) = -\infty$, picking up poles of $\Gamma(w)$ at $w = -2, -3, .....$. Note that there are zeros at $w = 0, -1$ from $\Gamma(-1-w)$, which are cancelled with the poles at $0, -1$. Hence 
	\est{I_1 &= \frac{x} {8\pi  x^{\frac 34}}  \sum_{n = 2}^{\infty} \frac{(-1)^n}{n! \Gamma(-1 + n)} (4\pi x^{\frac 14} )^{2n} \widetilde{\psi}\pr{\frac n2 + \frac 14} \\
		&= \frac{x} {8\pi  x^{\frac 34}}  \int_{0}^{\infty} \psi(y) \sum_{n = 2}^{\infty} \frac{(-1)^n}{n! (n-2)!} (4\pi x^{\frac 14} )^{2n} y^{\frac n2 + \frac 14 - 1 } \> dy \\
		&= 2\pi x  \int_{0}^{\infty} \frac{\psi(y)}{(xy)^{\frac 14}} \sum_{n = 0}^{\infty} \frac{(-1)^n}{(n + 2)! n!} (4\pi (xy)^{\frac 14} )^{2n + 2} \> dy \\
		& = 2\pi x \int_0^\infty \frac{\psi(y)}{(xy)^{\frac 14}} J_2\pr{8\pi(xy)^{\frac 14} } \> dy}
	where the last equation comes from the representation of $J$-Bessel function in Equation (\ref{asympJxSm}). 
	By Lemma \ref{lem:Besselresult} Equation (\ref{asympJxbig}), we then have 
	$$ I_1 =  x \int_0^\infty \frac{\psi(y)}{(xy)^{\frac 14 + \frac 18}} \left[ c \e{4(xy)^{\frac 14}} W_2(8\pi (xy)^{\frac 14}) + d \e{ - 4(xy)^{\frac 14}} \overline{W_2}(8\pi (xy)^{\frac 14}) \right]  \> dy  $$
	for some constants $c, d$.

	Next we consider $I_2$. Since the expansion of $H(s)$ is of the form in (\ref{expansionH}). Therefore it is sufficient to consider 
	\est{I_{2, j} = \frac{2x \pi^2}{2\pi i} \int_{(\sigma_2)} \frac{\pi^{-8s}4^{2 - 8s}x^{-2s} \Gamma\pr{4s -  \frac{ 3}{2}}}{\Gamma\pr{\frac 12 -4s}s^j}\widetilde{\psi}(-2s + 1)  \> ds. }
	By the change of variables $4s - 3/2 \rightarrow w$, we have
	\est{ I_{2, j} = \frac{4^j x }{16\pi^2 ix^{\frac 34}} \int_{(\sigma_1)} \frac{\Gamma\pr{w}}{\Gamma\pr{-1 - w} (w + 3/2)^j}(4\pi)^{-2w}x^{-\frac w2 } \widetilde{\psi}\left(-\frac w2 + \frac 14\right)  \> dw .}
	
	We illustrate how to find asymptotic formula for $I_{2, j}$ when $j = 1$. Other cases are proceeded similarly.
	
	\est{I_{2, 1} &=  -\frac{ x }{4\pi^2 ix^{\frac 34}} \int_{(\sigma_2)} \frac{\Gamma\pr{w}}{\Gamma\pr{-1 - w} (-1 - w)}(4\pi)^{-2w}x^{-\frac w2 } \widetilde{\psi}\left(-\frac w2 + \frac 14\right)  \> dw \\
		&+  \frac{ x }{8\pi^2 ix^{\frac 34}} \int_{(\sigma_2)} \frac{\Gamma\pr{w}}{\Gamma\pr{-1 - w} (-1 - w)(w + 3/2)}(4\pi)^{-2w}x^{-\frac w2 } \widetilde{\psi}\left(-\frac w2 + \frac 14\right)  \> dw \\
		&=: I_{2, 1}^1 + I_{2, 1}^2.}
	
	By moving the contour integral to the far left for $I_{2, 1}^1$, we pick up poles at $n = -1, -2, ....$ and obtain that 
	\est{I_{2, 1}^1 &=  -\frac{ x }{2\pi x^{\frac 34}} \sum_{n = 1}^\infty \frac{(-1)^n}{n! \Gamma(n)} (4\pi x^{\frac 14})^{2n} \widetilde{\psi}\pr{\frac n2 + \frac 14} 
		 = -2 x \int_0^\infty \frac{\psi(y)}{(xy)^{\frac 12}} J_1\pr{8\pi(xy)^{\frac 14} } \> dy.}
	
	Next we write $I_{2, 1}^2$ as $I_{2,1}^3 + I_{2,1}^4$, where 
	\est{I_{2, 1}^3 & = -\frac{ x }{8\pi^2 ix^{\frac 34}} \int_{(\sigma_2)} \frac{\Gamma\pr{w}}{\Gamma\pr{- w} (-w)}(4\pi)^{-2w}x^{-\frac w2 } \widetilde{\psi}\left(-\frac w2 + \frac 14\right)  \> dw \\
	I_{2,1}^4 &= \frac{3 x }{16\pi^2 ix^{\frac 34}} \int_{(\sigma_2)} \frac{\Gamma\pr{w}}{\Gamma\pr{- w} (- w)(w + 3/2)}(4\pi)^{-2w}x^{-\frac w2 } \widetilde{\psi}\left(-\frac w2 + \frac 14\right)  \> dw.}
Repeating the above arguments, we have that
\est{I_{2, 1}^3 =  - \frac{x}{4\pi} \int_0^\infty \frac{\psi(y)}{(xy)^{\frac 34}} J_0\pr{8\pi(xy)^{\frac 14} } \> dy.}
We use Equation (\ref{asympJxbig}) to write the $J$-Bessel function in terms of $W_k$. Finally we then apply the same arguments to $I_{2, 1}^4$ and so on, and obtain lower order terms.

\end{proof}

\section*{Acknowledgement} 
We would like to thank Matt Young for enlightening correspondence and particularly for the  helpful reference regarding the exterior square $L$-functions \cite{Kon} and the idea of using a Mobius inversion type formula as in \cite{LiYoung} in the proofs in Section \ref{sec:lemsumpf}.  We also thank the anonymous referees for helpful comments and suggestions.

Xiannan Li would like to acknowledge support from a Simons Collaboration grant and a KSU startup grant. Vorrapan Chandee would like to acknowledge support from AMS-Simons Travel grant.

\end{document}